
\documentclass[a4paper,oneside,11pt]{article}


\usepackage{amsxtra,amssymb,amsmath,amsthm,amsbsy,enumerate}
\usepackage{amssymb,latexsym}
\usepackage{graphicx,amsmath,amscd}

\usepackage[french,british]{babel}
\usepackage[all]{xy}
 \usepackage[utf8]{inputenc}
 \usepackage[T1]{fontenc}
\usepackage{amsthm}

 \usepackage{xspace}
 \usepackage{latexsym}


\setlength{\textwidth}{15cm}
\setlength{\textheight}{19cm}
\setlength{\topmargin}{0cm}
\setlength{\oddsidemargin}{1cm}
\setlength{\evensidemargin}{1cm}
\setlength{\parindent}{0.4cm}


\theoremstyle{definition}
\newtheorem{theorem}{Theorem}[section]
\newtheorem{proposition}[theorem]{Proposition}
\newtheorem{corollary}[theorem]{Corollary}

\newtheorem{problem}[theorem]{Problem}
\newtheorem{example}[theorem]{Example}

\newtheorem{remark}[theorem]{Remark}
\newtheorem{lemma}[theorem]{Lemma}


\def\id{{\rm id}}

\def\xto{\xrightarrow}
\def\toto{\rightrightarrows}
\def\action{\curvearrowright}

\def\g{{\mathfrak g}}

\def\U{{\mathcal U}}
\def\R{{\mathbb R}}

\def\xfrom{\xleftarrow}

\def\u{\underline}

\def\<{\langle}
\def\>{\rangle}

\def\from{\gets}


\newcommand{\Cour}[1]      {[\![#1]\!]}

\def\E{\mathcal{E}}

\def\rk{{\rm rk}}


\begin{document}

\title{\bf Morita equivalences of vector bundles}

\author{Matias del Hoyo \and Cristian Ortiz}

\maketitle

\abstract{
We study vector bundles over Lie groupoids, known as VB-groupoids, and their induced geometric objects over differentiable stacks. 
We establish a fundamental theorem that characterizes VB-Morita maps in terms of fiber and basic data, and use it to prove the Morita invariance of VB-cohomology, with implications to deformation cohomology of Lie groupoids and of classic geometries. 
We discuss applications of our theory to Poisson geometry, providing a new insight over Marsden-Weinstein reduction and the integration of Dirac structures.
We conclude by proving that the derived category of VB-groupoids is a Morita invariant, which leads to a notion of VB-stacks, and solves (an instance of) an open question on representations up to homotopy.
}

\tableofcontents

\section{Introduction}


Lie groupoids are a categorification of the notion of smooth manifolds. They were first studied by A. Ehresmann and their examples include Lie groups, manifolds, Lie group actions, fibrations and foliations, among others. They manage to express both global and local symmetries, and constitute a convenient unifying framework to perform equivariant differential geometry.

A Lie groupoid yields a Lie algebroid through a differentiation process that extends the classic Lie theory. Abstract Lie algebroids arise naturally and they are not always integrable. A key example of a Lie algebroid is given by the cotangent bundle $(T^*M)_\pi$ of a Poisson structure $\pi$ on $M$. When integrable, its corresponding Lie groupoid inherits a compatible symplectic structure, setting up a fruitful interaction between Poisson geometry and Lie groupoids. This has been explored by A. Weinstein and others, and made Lie groupoids a useful tool in related areas such as quantization \cite{h,wx}, Dirac structures \cite{bcwz} and generalized complex geometry \cite{bg}. 

Another remarkable aspect of Lie groupoids is that they provide a geometric setting to deal with singular spaces such as orbifolds \cite{mp} and, more generally, differentiable stacks \cite{bx}. These are stacks over manifolds that can be presented as global quotients. Abstract stacks were introduced by Grothendieck and rely on the technical theory of sites and fibered categories. From the Lie groupoid perspective, a differentiable stack is nothing but a Morita class of Lie groupoids. Under this construction, orbifolds correspond to proper and \'etale Lie groupoids up to Morita equivalence \cite{mp}. 

\medskip


One can extend geometric notions from smooth manifolds to the framework of differentiable stacks by looking at the corresponding notion over Lie groupoids. Here we are concerned with vector bundles. A definition of vector bundles over differentiable stacks can be found in \cite{bngx}. If a differentiable stack $\mathfrak{X}$ is presented by a Lie groupoid $\u G=(G\toto M)$, then a vector bundle over $\mathfrak{X}$ corresponds to a vector bundle $E\to M$ endowed with a representation of $\u G$. However, this definition does not include the tangent stack unless in the orbifold case, which is quite unsatisfactory.

Seeking a notion of vector bundles over differentiable stacks which includes the tangent stack as an example, we study vector bundles over Lie groupoids and their Morita equivalences. Here, by a vector bundle over a Lie groupoid we mean a \emph{VB-groupoid}, 
a double structure a la Mackenzie that mixes Lie groupoids and vector bundles. This concept has recently received increased attention due to its concrete
applications in Poisson geometry \cite{m2, mx} and representation theory \cite{gm}. 

Roughly speaking, a VB-groupoid is a Lie groupoid fibration $\u \Gamma \to \u G$ whose fibers are 2-vects, categorified vector spaces. 
Besides the tangent and cotangent constructions, other 	examples arise from the representations of Lie groupoids. Extending this, it is possible to set a correspondence between general VB-groupoids and 2-term representations up to homotopy \cite{ac,gm}. 
In this paper we show that VB-groupoids serve as models for vector bundle over a differentiable stack, by studying {\em VB-Morita maps}.

\medskip


After a quick review on VB-groupoids and 2-term representations up to homotopy, our first main result is {\bf Theorem \ref{thm:Grothendieck}},
where we establish an equivalence of categories between them,
upgrading the correspondence of Gracia-Saz and Mehta \cite{gm}, and obtaining the global version of the equivalence of categories proved in \cite{djo,gm2}. 

Then we discuss the notion of VB-Morita map, which are the maps yielding isomorphisms between the corresponding stacks. We prove {\bf Theorem \ref{thm:VB-morita}}, which characterizes VB-Morita maps as those that are Morita between the base and between the fibers. Under the previous dictionary, we conclude that a VB-Morita map between VB-groupoids over a fixed base corresponds to a quasi-isomorphism between the 2-term representations up to homotopy. We derive several corollaries 
out of it.

In section 4 we study cohomology, we prove {\bf Theorem \ref{thm:cohomology}}, which says that a VB-Morita map induces an isomorphism in the level of VB-cohomology. This gives a general proof of Morita invariance of the cohomology of a Lie groupoid with coefficients in a 2-term representation up to homotopy. In particular, with the adjoint representation, we get the Morita invariance of deformation cohomology of \cite{cms}.

Section 5 presents two applications of our results to symplectic geometry. First, we observe in {\bf Proposition \ref{prop:marsdenweinstein}} that Marsden-Weinstein reduction of the cotangent lift of an action can be seen as an instance of VB-Morita equivalence. Then we apply our results to Dirac geometry. We give an interpretation of Dirac structures in terms of Lie algebroids and their adjoint and coadjoint representations up to homotopy. This allows us to show {\bf Proposition \ref{prop:presymplectic}}, which gives a neat description of pre-symplectic groupoids and a simple proof of the integration of (non-twisted) Dirac structures \cite{bcwz}.

The last section 6 is about Morita invariance of VB-groupoids, one of the main motivations for our work. 
After a thorough study of the category of VB-groupoids over a fixed base, our {\bf Theorem \ref{thm:VB-stacks}} shows that its derived category is a Morita invariant. This sets solid basis for future work on 2-vector bundles over differentiable stacks, and solves the 2-term case of Morita invariance of representations up to homotopy,
a problem posed by C. Arias Abad and M. Crainic in \cite[Ex. 3.18]{ac}.



\medskip

\textbf{Acknowledgements:}
This collaboration started at IME-USP in October 2014, during a one-month visit of MdH funded by IMPA, we acknowledge both institutions for that. We thank A. Cabrera and T. Drummond for sharing with us a preliminary version of \cite{cd} and for several related conversations. 
We thank R. Loja Fernandes for suggesting the application explored in section \ref{sec:mw}.
We also thank H. Bursztyn, M. Crainic, G. Ginot and R. Mehta for useful conversations that shaped our view on the topic.
The project was partially supported by Projeto P.V.E. 88881.030367/2013-01 (CAPES/Brazil).
CO was partially supported by grant 2016/01630-6 S\~ao Paulo Research Foundation.


\section{A detour on VB-groupoids}


We review basic definitions, constructions, and examples regarding Lie groupoids, VB-groupoids and representations up to homotopy, so as to set notations and ease the reading. We recall the ideas, constructions and results that we shall use throughout the paper. Almost all this material can be found in the literature (see e.g. \cite{bcdh,gm}). Our main contribution here is to upgrade the correspondence between VB-groupoids and representations up to homotopy to an equivalence of categories.

\subsection{Basic concepts}


A {\bf Lie groupoid} $\u G=(G\toto M)$ consists of manifolds $G,M$, two surjective submersions $s,t:G\to M$, and an associative multiplication $m:G\times_M G\to G$ admiting unit $u:M\to G$ and inverses $i:G\to G$. A {\bf Lie groupoid morphism} or {\bf map} $\phi:\u G\to \u G'$ consists of smooth maps $\phi_1:G\to G'$, $\phi_0:M\to M'$ between the objects and arrows and commuting with the five structural maps. 


A {\bf VB-groupoid} $\u \Gamma=(\Gamma\toto E)$ over $\u G$ consists of a Lie groupoid morphism $\pi:\u\Gamma\to\u G$ such that the corresponding maps $\Gamma\to G$ and $E\to M$ are vector bundle projections, for which the structure maps of $\u\Gamma$ are linear (cf. \cite{bcdh,gm,m}). 
Given $\u\Gamma,\u\Gamma'$ VB-groupoids over $\u G,\u G'$, respectively, a {\bf VB-map} $(\Phi,\phi):\u\Gamma\to\u\Gamma'$ is a Lie groupoid map $\Phi:\u\Gamma\to\u\Gamma'$ covering another $\phi:\u G\to \u G'$ and such that $\Phi$ is fiberwise linear. 
$$\xymatrix{
\u\Gamma \ar[d] \ar[r]^\Phi & \u\Gamma' \ar[d] \\
\u G \ar[r]^\phi & \u G'
}$$


Given $\u\Gamma\to \u G$ a VB-groupoid, the vector bundle $C=\ker(s:\Gamma\to E)|_M$ is called the {\bf core} and the map $\partial=t|_C:C\to E$ is called the {\bf anchor}. There are canonical identifications $\ker(s:\Gamma\to E)=t^*C$ and $\ker(t:\Gamma\to E)=s^*C$.
In light of Dold-Kan correspondence, the anchor, regarded as a two-term chain complex, encodes the restriction of $\u\Gamma$ to the units $\u M=(M\toto M)$. 
A VB-map yields a morphism between the core complexes 
$$(C\xto{\partial} E)\xto{\Phi}(C'\xto{\partial'} E')$$ 
and we say that $(\Phi,\phi)$ is a {\bf quasi-isomorphism} if it yields a (fiberwise) quasi-isomorphism between the core complexes.
We say that $\u\Gamma$ is {\bf acyclic} if $\partial$ is a fiberwise isomorphism, or equivalently, if $\u \Gamma\to \u 0$ is a quasi-isomorphism. 

\begin{remark}\label{rmk:acyclic}
Acyclic VB-groupoids were called {\em of type 1} in \cite{gm}. Note that an acyclic VB-groupoid is determined by the base and the unit bundle up to isomorphism: as a vector bundle $\Gamma\cong s^* E\oplus t^*E$, and as a groupoid there is one arrow in $\u\Gamma$ between two vectors if and only if they sit over points in the same orbit (cf. \cite[Prop. 6.5]{gm}).
\end{remark}


A VB-groupoid is a special case of a Lie groupoid fibration (cf. \cite{dhf,m}). 
Given $\u\Gamma=(\Gamma\toto E)$ a VB-groupoid, a (linear) {\bf cleavage} is defined to be a linear section $\Sigma$ of the source map $s:\Gamma\to s^* E$ over $G$.  Thus, given \smash{$y\xfrom g x$} on $G$ and $e$ over $x$, $\Sigma(g,e)$ is an arrow in $\u\Gamma$ projecting on $g$ with source $e$. The cleavage is said {\bf unital} if  $\Sigma(\id,e)=\id$, and {\bf flat} if $\Sigma(h,t\Sigma(g,e))\circ \Sigma(g,e)=\Sigma(hg,e)$. Cleavages for a VB-groupoid were called horizontal lifts in \cite{gm} and other references. 
Every VB-groupoid admits a unital cleavage, which can be constructed by using a partition of unity, but they may not admit a flat one. We will often assume that our cleavages are unital.


Given $\u\Gamma$ a VB-groupoid over $\u G$, and given $\phi:\u G'\to\u G$ a Lie groupoid map, the {\bf base-change} $\phi^*\u\Gamma$ is the VB-groupoid over $\u G'$ obtained by constructing the groupoid-theoretic fiber product between the projection $\pi:\u\Gamma\to \u G$ and $\phi$ (cf. \cite[Rmk 3.2.7]{bcdh}).


\begin{example}
Let $\u G$ be a Lie groupoid.
\begin{itemize}
 \item The {\bf tangent groupoid} $\u{TG}=(TG\toto TM)$ is a VB-groupoid over $\u G$, defined by differentiating the structural maps. Its core is $A_G$, the vector bundle of the Lie algebroid of $G$, and the anchor $\partial:A_G\to TM$ is the usual anchor map of the algebroid.
 Cleavages for $\u{TG}$ are called {\em Ehresmann connections} on \cite{ac} and {\em Cartan connections} on \cite{b}.
 \item The {\bf cotangent groupoid} $\u{T^*G}=(T^*G\toto A^*)$ is also a VB-groupoid over $\u G$, the definition of its structure maps is rather subtle, see \cite[\S~11.3]{m} for details. 
\end{itemize}
\end{example}

The cotangent $\u{T^*G}$ can be built out of $\u{TG}$ by a general duality construction. Given a VB-groupoid $\u\Gamma$, its {\bf dual} VB-groupoid $\u\Gamma^*=(\Gamma^*\toto C^*)$ can be defined, as a VB-groupoid over $\u G$, whose core-sequence is dual to that of $\u\Gamma$. 
More on duality of VB-groupoids can be consulted in \cite{bcdh,gm,m}.


We go now to another fundamental example, that allows us to think of VB-groupoids as generalized representations. Recall that a {\bf representation} $\u G\action E$ of a Lie groupoid $\u G=(G\toto M)$ over a vector bundle $E\to M$ can be described as a map 
$$\rho:G\times_M E\to E \qquad \rho(y\xfrom g x, e)=g\cdot e$$
such that $\rho_{\id}=\id$, $\rho_h\rho_g=\rho_{hg}$, and $\rho_g:E_x\to E_y$ is linear. 
In the case on which $\rho_{\id}=\id$ holds but $\rho_h\rho_g=\rho_{hg}$ may fail we refer to it as a {\bf pseudo-representation}.

\begin{example}\label{ex:representation}
Given $\rho:\u G\action E$, the corresponding {\bf action groupoid} $G\times_M E\toto E$, with source map the projection, and target map $\rho$, is naturally a VB-groupoid over $\u G$ with trivial core. Since the source map is a fiberwise isomorphism, there is only one cleavage for the action groupoid, and it is unital and flat.
\end{example}

Conversely, if $\u\Gamma\to\u G$ is a VB-groupoid with trivial core $C=0_M$, 
then the source map induces a vector bundle isomorphism $\Gamma\cong s^*E=G\times_M E$, and the composition
$G\times_{M} E \cong \Gamma\xto t E$
yields a representation $G\action E$. 
This gives a 1-1 correspondence between (isomorphism classes of) representations $G\action E$ and VB-groupoid $\u\Gamma\to \u G$ with trivial core (cf. \cite{dh,gm}), regarding VB-groupoids as generalized representations.

\subsection{Representations up to homotopy}


The previous correspondence can be extended so as to relate general VB-groupoids with certain \emph{representations up to homotopy}, as it was done in detail in \cite{gm}.
We briefly recall the basics on representations up to homotopy. They are very relevant in the theory of Lie groupoids and algebroids, necessary to make sense, for instance, of the adjoint and coadjoint representations.

\medskip


Given $\u G=(G\toto M)$ a Lie groupoid, its {\bf nerve} $N(\u G)$ is the simplicial manifold whose $p$-simplices are strings of $p$ composable arrows $G^{(p)}=\{(g_1,\dots,g_p); s(g_i) = t(g_{i+1})\}$,
and whose {\bf face} maps $\partial_i:G^{(p)}\to G^{(p-1)}$ are defined as follows:
$$
\partial_i(g_1,\dots,g_p)=
  \begin{cases}
  (g_2,\dots,g_p) & i=0\\ 
  (g_1,\dots,g_{i}g_{i+1},\dots,g_p) & i=1,\dots,p-1\\ 
  (g_1,\dots,g_{p-1}) & i=p\end{cases}$$
Out of the nerve one builds the {\bf differential graded algebra} $C(\u G)=(C^{\infty}(G^{(p)}),\delta)$,
that we should think of as the algebra of functions over the Lie groupoid:
$$\delta:C^{\infty}(G^{(p)})\to C^{\infty}(G^{(p+1)}) \qquad 
\delta\alpha=\sum_{i=0}^{p}(-1)^i\partial_i^*(\alpha),$$


Given $\u G$, a vector bundle $E\to M$ leads to a right $C(\u G)$-module $C^{\bullet}(\u G,E)=\oplus_{p\geq 0}C^{p}(\u G,E)$ by defining $C^{p}(\u G,E)=\Gamma((\pi^p_0)^*E)$, where $\pi^p_0(g_1,...,g_p)= t(g_1)$.

\begin{lemma}[\cite{ac,gm}]
There is a 1-1 correspondence between representations $\rho:\u G\action E$ and degree 1 differential operator $D:C^\bullet(\u G,E)\to C^{\bullet}(\u G,E)$ with square zero, implicit in the formula below:
\begin{align*}
D\omega(g_1,...,g_p)=&\rho_{g_1}\omega(g_2,...,g_p)+\sum_{i=1}^{p}(-1)^i\omega(g_1,...,g_ig_{i+1},...,g_p) \\ &+ (-1)^{p}\omega(g_1,...,g_{p-1}).
 \end{align*}
\end{lemma}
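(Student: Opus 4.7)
The plan is to verify both directions of the correspondence separately, starting from the explicit formula.

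\textbf{Forward direction.} Given a representation $\rho$, I would first check that the formula defines an operator $D$ of degree $+1$, by verifying that each term on the right-hand side lies in the correct fiber $E_{t(g_1)}$: the first summand uses $\rho_{g_1}$ to transport $\omega(g_2,\dots,g_p)\in E_{t(g_2)}=E_{s(g_1)}$ into $E_{t(g_1)}$, while the remaining terms already live in $E_{t(g_1)}$ because either $t(g_1)$ is preserved by the face in question or, in the last term, matches the new leading target $t(g_1)$. Next I would prove $D^2=0$ by the standard simplicial pairing argument: expanding $D^2\omega(g_1,\dots,g_{p+1})$ yields a double sum indexed by pairs of face operators, and terms cancel in pairs via associativity of groupoid multiplication; the pairs involving the leading face (where $\rho$ appears) cancel thanks to the multiplicativity $\rho_{g_1 g_2}=\rho_{g_1}\rho_{g_2}$. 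Finally I would verify the Leibniz identity $D(\omega\cdot\alpha)=D\omega\cdot\alpha+(-1)^{|\omega|}\omega\cdot\delta\alpha$ that promotes $D$ to a differential operator on the $C(\u G)$-module; this is a direct combinatorial match of the face decompositions on both sides.

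\textbf{Inverse direction.} Conversely, given $D$ of degree $+1$, square-zero, and compatible with the $C(\u G)$-module structure, I would read off $\rho$ from degree zero. For $\omega\in C^0(\u G,E)=\Gamma(E)$ and $g\colon x\to y$, one has $D\omega(g)\in E_y$, and I would define $\rho_g(e):=D\omega(g)+\omega(y)$ for any local section $\omega$ with $\omega(x)=e$. The critical step is showing independence of the chosen extension: if $\omega(x)=0$, write $\omega$ locally as $f\omega'$ with $f\in C^\infty(M)$ and $f(x)=0$, then the Leibniz rule for $D$ with respect to multiplication by $f$ (combined with $\delta f = s^*f - t^*f$) forces $D\omega(g)=-\omega(y)$, so $\rho_g(e)$ depends only on $e$. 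Fiberwise linearity and smoothness in $g$ are then automatic. The unit axiom $\rho_{\mathrm{id}}=\mathrm{id}$ follows by evaluating the formula at identity arrows, using again the module structure. The multiplicativity $\rho_{gh}=\rho_g\rho_h$ drops out of $D^2\omega(g,h)=0$ for $\omega\in\Gamma(E)$: expanding and using $s(g)=t(h)$, the expression collapses to $(\rho_g\rho_h-\rho_{gh})\,\omega(s(h))=0$.

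\textbf{Bijection.} The two constructions are manifestly inverse in degree zero, and the explicit formula determines $D$ on all of $C^\bullet(\u G,E)$ once $\rho$ is known, so each side is recovered from its degree-zero data.

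The main obstacle is the well-definedness step in the inverse direction: extracting a genuine bundle map $\rho_g\colon E_{s(g)}\to E_{t(g)}$ from the operator $D$ requires leveraging the Leibniz rule precisely, not merely linearity of $D$, and is what distinguishes a honest representation from the more general cochain-level data that would otherwise emerge.
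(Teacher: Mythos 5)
The paper does not prove this lemma --- it is quoted from \cite{ac,gm} without argument --- so your proposal is being measured against the standard proof in the literature, which it reproduces faithfully: the forward direction is the usual simplicial cancellation (with multiplicativity of $\rho$ handling the two terms involving the leading face), and the inverse direction correctly isolates the well-definedness of $\rho_g$ on fibers as the point where the Leibniz rule, and not mere $\R$-linearity, is needed. That step is right as you set it up, with the small caveat that a section vanishing at $x$ is in general a finite sum $\sum_i f_i\omega_i$ with $f_i(x)=0$ rather than a single product $f\omega'$; linearity of $D$ absorbs this. The extraction of multiplicativity from $D^2\omega(g,h)=0$ is also correct.

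There is, however, one genuine gap: the unit axiom $\rho_{\id}=\id$ does \emph{not} ``follow by evaluating the formula at identity arrows, using again the module structure.'' From $D^2=0$ you only obtain $\rho_{\id_x}\circ\rho_{\id_x}=\rho_{\id_x}$, i.e.\ that $\rho_{\id_x}$ is an idempotent endomorphism of $E_x$, and the Leibniz rule gives nothing further. Concretely, for the trivial group over a point acting on $E=\R^2$ via a non-identity projection $\rho$ (or via $\rho=0$), the resulting $D$ is a degree~$1$, square-zero operator satisfying the Leibniz identity which does not arise from a representation; so the correspondence, read literally as a bijection onto \emph{all} such operators, fails. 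The standard fix --- implicit in \cite{ac,gm} --- is to work with normalized cochains (those vanishing on strings containing an identity arrow) or, equivalently, to require $D$ to be unital; under either convention one forces $\rho_{\id_x}=\id$ and your bijection argument goes through. You should either impose this normalization explicitly or supply the (impossible, as stated) derivation of the unit axiom.
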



Following the above correspondence, if $\mathcal{E}=\oplus_{r}E_r$ is a graded vector bundle over $M$, one constructs the graded right $C^{\bullet}(\u G)$-module $C(\u G,\mathcal{E})=\oplus C(\u G,\mathcal{E})^p$, where
$C(\u G,\mathcal{E})^p=\oplus_{q-r=p}C^{q}(\u G,E_r)$, and define a \textbf{representation up to homotopy} (or ruth) $\u G\action\E$ as a total degree 1 derivation $\mathcal{D}:C(\u G,\mathcal{E})^{\bullet}\to C(\u G,\E)^{\bullet+1}$ with $\mathcal{D}^2=0$  (cf. \cite{ac}).
A {\bf morphism} $\mathcal{E}\to \mathcal{E'}$ is given by a total degree zero map $\Phi:C(\u G,\mathcal{E})\to C(\u G,\mathcal{E'})$ which is $C(\u G)$-linear and commutes with the differentials.
This way we can define the category $Rep^{\infty}(\u G)$ of representations up to homotopy of $\u G$.


Given $\u G\action\E$, the derivation $\mathcal D$ can be decomposed as a sequence $\mathcal D=\oplus_i \mathcal D_i$ by using the bigrading, and each component $\mathcal D_i$ has a simple geometric interpretation \cite{ac}: the first $\mathcal D_0=\partial$ is a differential on $\mathcal E$, the second $\mathcal D_0=\rho$ is a pseudo-representation  commuting with $\partial$, the third is a homotopy ruling the failure of $\rho$ to be multiplicative, etc.
This interpretation relies on the following correspondence:
\begin{align*}
\mathrm{Hom}^p_{C^{\bullet}(\u G)}(C^{\bullet}(\u G,B),C^{\bullet}(\u G,B'))\ni\hat\omega &\rightleftharpoons \omega\in  C^{p}(\u G,B\to B')\\
(\hat{\omega}\theta)(g_1,...,g_p,g_{p+1},...,g_{p+q}) &= \omega(g_1,...,g_p)\theta(g_{p+1},...,g_{p+q}).
\end{align*}

Here $C^{\bullet}(\u G,B\to B')$ denotes the {\bf transformation complex} associated to $\u G$. An element $\omega\in C^{p}(\u G,B\to B')$ associates to any $(g_1,...,g_p)\in G^{(p)}$ a linear map $\omega_{(g_1,...,g_p)}:B_{s(g_p)}\to B'_{t(g_1)}$. For more details, see \cite{gm}.


We refer the reader to $\cite{ac}$ for a thorough description of the general case. Here we are concerned only with {\bf 2-term representations up to homotopy}, that is, the case when $\E=E\oplus C[1]$ is concentrated on degrees 0 and 1. 

\begin{proposition}[cf. \cite{gm}]\label{prop:breaking}
A ruth $\u G\action E\oplus C[1]$ is the same as a tuple $(\partial,\rho_E,\rho_C,\gamma)$ where $\partial:C\to E$ is a linear map, $\rho^E,\rho^C$ are pseudo-representations $\u G\action C$ and $\u G\action E$, and  $\gamma\in C^2(\u G,E\to C)$ is a {\bf curvature tensor}, satisfying:
\begin{align*}\label{eq:eqsruths}
\rho^E_{g_1}\circ\partial-\partial\circ\rho^C_{g_1} &=  0 \nonumber\\
\rho^C_{g_1}\circ\rho^C_{g_2}-\rho^C_{g_1g_2}+\gamma_{g_1,g_2}\circ\partial & = 0\nonumber \\
\rho^E_{g_1}\circ\rho^E_{g_2}-\rho^E_{g_1g_2}+\partial\circ\gamma_{g_1,g_2}&= 0\nonumber \\
\rho^C_{g_1}\circ \gamma_{g_2,g_3}-\gamma_{g_1g_2,g_3}+\gamma_{g_1,g_2g_3}-\gamma_{g_1,g_2}\circ \rho^E_{g_3} &= 0,
\end{align*}
\end{proposition}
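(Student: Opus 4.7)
The plan is to recover the data $(\partial,\rho^E,\rho^C,\gamma)$ as the homogeneous components of the derivation $\mathcal{D}$ with respect to the nerve bigrading, and then read off the four equations by expanding $\mathcal{D}^2=0$. Every element of $C^q(\u G,E_r)$ has total degree $q-r$, so writing $\mathcal{D}=\sum_i\mathcal{D}_i$ where $\mathcal{D}_i$ raises the nerve index $q$ by $i$ and the graded-bundle index $r$ by $i-1$, the total degree of each $\mathcal{D}_i$ is indeed $+1$. Because $\E=E\oplus C[1]$ is concentrated in bundle degrees $0$ and $1$, only $\mathcal{D}_0$, $\mathcal{D}_1$, $\mathcal{D}_2$ can be nonzero.

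First, I would identify each homogeneous piece with concrete geometric data. Since $\mathcal{D}$ is a derivation over $C(\u G)$, it is determined by its restriction to generators, and the correspondence
\[
\mathrm{Hom}^p_{C^\bullet(\u G)}\bigl(C^\bullet(\u G,B),C^\bullet(\u G,B')\bigr)\;\cong\;C^p(\u G,B\to B')
\]
recalled in the paper lets me write $\mathcal{D}_0\leftrightarrow\partial\in C^0(\u G,C\to E)$, $\mathcal{D}_1\leftrightarrow(\rho^E,\rho^C)$ with $\rho^E\in C^1(\u G,E\to E)$ and $\rho^C\in C^1(\u G,C\to C)$, and $\mathcal{D}_2\leftrightarrow\gamma\in C^2(\u G,E\to C)$. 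The formula for $D$ stated in the previous lemma, together with the Leibniz rule, then forces $\mathcal{D}_1$ to act on a degree-$p$ cochain exactly by the pseudo-representation terms plus the bar-type face sums, so $\rho^E$ and $\rho^C$ are genuine pseudo-representations.

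Next, I expand $\mathcal{D}^2=0$ and sort the resulting identity by how much the nerve degree is raised. This yields five equations, one for each $i=0,1,2,3,4$:
\begin{align*}
\mathcal{D}_0\mathcal{D}_0 &= 0, \\
\mathcal{D}_0\mathcal{D}_1+\mathcal{D}_1\mathcal{D}_0 &= 0, \\
\mathcal{D}_0\mathcal{D}_2+\mathcal{D}_1\mathcal{D}_1+\mathcal{D}_2\mathcal{D}_0 &= 0, \\
\mathcal{D}_1\mathcal{D}_2+\mathcal{D}_2\mathcal{D}_1 &= 0, \\
\mathcal{D}_2\mathcal{D}_2 &= 0.
\end{align*}
The first and last are automatic: a single step of $\partial$ or two steps of $\gamma$ would land outside the two-term range $\{0,1\}$. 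The second translates, via the identification above, into $\rho^E\circ\partial-\partial\circ\rho^C=0$. The third splits on the sub-bundles $C$ and $E$ to produce the two curvature identities for $\rho^C$ and $\rho^E$. The fourth is the Bianchi-type identity for $\gamma$.

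The main technical obstacle is not conceptual but bookkeeping: carefully tracking the Koszul signs that appear when the derivation $\mathcal{D}_i$ passes a cochain of nerve-degree $p$, and verifying that the sign conventions make the four displayed equations come out exactly as stated (rather than with a relative sign in the curvature or Bianchi term). I would handle this by evaluating both sides of each identity on a test cochain $\omega\in C^{\bullet}(\u G,\E)$ and comparing the contributions of each face map $\partial_i$ to the corresponding term in the formula for $D$, using that $\rho^E$ and $\rho^C$ commute with $\partial$ already verified in the first equation to simplify cross terms in the third.
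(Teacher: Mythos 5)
Your proposal is correct and follows exactly the route the paper indicates for this statement: the paper does not prove Proposition \ref{prop:breaking} itself (it defers to Gracia-Saz--Mehta), but the preceding paragraph describes precisely your decomposition $\mathcal{D}=\oplus_i\mathcal{D}_i$ by the bigrading, the identification of the components with $(\partial,\rho^E,\rho^C,\gamma)$ via the transformation-complex correspondence, and the extraction of the four identities from $\mathcal{D}^2=0$ sorted by nerve degree. The only point worth flagging is that unitality of $\rho^E,\rho^C$ (required by the paper's definition of pseudo-representation) does not fall out of $\mathcal{D}^2=0$ alone and rests on the usual normalization convention for the cochain complex, but that is a convention shared with the cited sources rather than a gap in your argument.
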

The first equation says that the quasi-actions $\rho^E,\rho^C$ commute with the differential $\partial$, the second and third say that they are multiplicative up to the curvature tensor $\gamma$, and the fourth is a compatibility condition that $\gamma$ must fulfill.


Next we extend previous result to morphisms of representations up to homotopy:

\begin{proposition}
\label{prop:ruthmap}
A morphism $\Phi:\E\to\E'$ is the same as a triple $\Phi=(\Phi_E,\Phi_C,\mu)$ where $\Phi_C:C\to C'$ and $\Phi_E:E\to E'$ and  $\mu\in C^1(\u G,E\to C')$ satisfy the following:
\begin{align*}
\partial'\circ \Phi_C-\Phi_E\circ \partial &= 0 \nonumber \\
\rho^{E'}_g\circ\Phi_E+\partial'\circ\mu_g-\Phi_E\circ\rho^E_g &= 0\nonumber \\
\Phi^C(\rho^C_g(c))-\mu_g(\partial(c))- \rho^{C'}_g\Phi^C(c)&= 0\nonumber \\
\Phi_C(\gamma_{g,h}(e))+\mu_g\rho^{E}_he+\rho^{C'}_g\mu_h(e)-\mu_{gh}(e)-\gamma'_{g,h}(\Phi_E(e))&= 0.
\end{align*}
\end{proposition}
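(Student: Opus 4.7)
The strategy is to mimic the proof of Proposition \ref{prop:breaking}, now at the level of morphisms: decompose $\Phi$ according to the bigrading given by $\E=E\oplus C[1]$ and $\E'=E'\oplus C'[1]$, use the $\mathrm{Hom}^p_{C^\bullet(\u G)}\cong C^p(\u G,B\to B')$ correspondence to name the components, and then unpack the single identity $\mathcal D'\circ\Phi=\Phi\circ\mathcal D$ into four equations indexed by bidegree.

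First I would observe that $C(\u G,\E)^p=C^p(\u G,E)\oplus C^{p+1}(\u G,C)$ and similarly for $\E'$, so that a total degree zero, $C^\bullet(\u G)$-linear map $\Phi:C(\u G,\E)\to C(\u G,\E')$ splits into four block components according to where it sends $C^\bullet(\u G,E)$ and $C^\bullet(\u G,C[1])$. Under the correspondence recalled before the proposition, these blocks correspond respectively to elements of $C^0(\u G,E\to E')$, $C^0(\u G,C\to C')$, $C^1(\u G,E\to C')$, and $C^{-1}(\u G,C\to E')=0$. The first two are just vector bundle maps $\Phi_E:E\to E'$ and $\Phi_C:C\to C'$, the third is the tensor $\mu\in C^1(\u G,E\to C')$, and the fourth vanishes for degree reasons. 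This already gives the bijection between morphisms $\E\to\E'$ and triples $(\Phi_E,\Phi_C,\mu)$ on the level of $C^\bullet(\u G)$-linear maps; it remains to turn the compatibility $\mathcal D'\circ\Phi=\Phi\circ\mathcal D$ into the four listed equations.

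For this, I would use the decomposition $\mathcal D=\mathcal D_0+\mathcal D_1+\mathcal D_2$ from Proposition \ref{prop:breaking}, where $\mathcal D_0$ encodes $\partial$, $\mathcal D_1$ encodes the pseudo-representations $\rho^E,\rho^C$, and $\mathcal D_2$ encodes the curvature $\gamma$; likewise for $\mathcal D'$. The operator $\mathcal D'\Phi-\Phi\mathcal D$ is a total degree $1$ map and decomposes into four bihomogeneous pieces: a $(C\to E')$-component of $\omega$-degree $0$, an $(E\to E')$- and a $(C\to C')$-component of $\omega$-degree $1$, and an $(E\to C')$-component of $\omega$-degree $2$. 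Applying each piece to a test element $\theta\in C^q(\u G,\cdot)$ and using the multiplication rule $(\hat\omega\theta)(g_1,\dots,g_{p+q})=\omega(g_1,\dots,g_p)\theta(g_{p+1},\dots,g_{p+q})$ reduces each identity to a pointwise statement about the arrow arguments $g,g_1,g_2,\dots$; the four components then read off exactly as the four equations in the statement, in order.

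The routine but delicate part is sign- and bookkeeping: one must verify that no extra terms appear in the $(E\to E')$ and $(C\to C')$ pieces beyond the $\partial'\mu$ and $\mu\partial$ contributions, and that in the $(E\to C')$ piece the only terms are $\Phi_C\gamma$, $\gamma'\Phi_E$, $\mu_g\rho^E_h$, $\rho^{C'}_g\mu_h$, and $\mu_{gh}$. The main conceptual point, however, is the algebraic one already made above: once one commits to the bigrading and the Hom/transformation-complex correspondence, the four equations are forced by matching bidegrees in $\mathcal D'\Phi=\Phi\mathcal D$, exactly parallel to how the four structural identities of a 2-term ruth arose from $\mathcal D^2=0$ in Proposition \ref{prop:breaking}. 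The only step requiring care is ensuring that no component of bidegree higher than $(E\to C',2)$ can appear, which follows from the fact that $\E$ is concentrated in degrees $0$ and $1$, so there is no room for further curvature-like corrections to $\Phi$.
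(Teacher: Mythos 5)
Your proposal is correct and follows essentially the same route as the paper: decompose the degree-zero $C^\bullet(\u G)$-linear map $\Phi$ into four block components via the bigrading and the $\mathrm{Hom}/C^p(\u G,B\to B')$ correspondence, note that the $(C\to E')$-block lives in $C^{-1}$ and hence vanishes, and then read off the four equations by matching bidegrees in $\mathcal D'\circ\Phi=\Phi\circ\mathcal D$. The paper simply labels this last verification ``straightforward''; your bidegree-by-bidegree organization is a correct way of carrying it out.
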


The first equation says that $\Phi$ commutes with the anchor maps. The second and third say that $\Phi$ preserves the quasi-action of $G$ up to $\mu$. The last one should be read as a compatibility condition between $\gamma$ and $\mu$, it will become clear in Theorem \ref{thm:Grothendieck}.

\begin{proof}
Given $\Phi$, for each $p\geq 0$ we have
$$\Phi:C^{p}(\u G,E)\oplus C^{p+1}(\u G,C)\to C^{p}(\u G,E')\oplus C^{p+1}(\u G,C'),$$
and this can be decomposed into four components:
$$\begin{matrix}
C^{\bullet}(\u G,E)\xto{\widehat{\Phi_E}} C^{\bullet}(\u G,E') &
C^{\bullet}(\u G,C)\xto{\widehat{\Phi}_C} C^{\bullet}(\u G,C') \\
C^{\bullet}(\u G,E)\xto{\hat{\mu}} C^{\bullet+1}(\u G,C') &
C^{\bullet}(\u G,C)\xto{\hat{\phi}} C^{\bullet-1}(\u G,E')
\end{matrix}$$
These are $C^{\bullet}(\u G)$-linear operators corresponding to elements $\Phi_E\in \mathrm{Hom}(E,E'), \Phi_C\in\mathrm{Hom}(C,C')$ and $\mu\in C^1(\u G,E\to C')$. The last component $\phi$ vanishes by grading conventions. It is straightforward to check that the condition $\mathcal{D}'\circ \Phi=\Phi\circ \mathcal{D}$ translate into the above identities.
\end{proof}


The category of 2-term representations up to homotopy of a Lie groupoid $\u G$ is denoted by $Rep^{\infty}_{\text{2-term}}(\u G)$. We refer to \cite{ac} for more details about morphisms of representations up to homotopy of arbitrary terms and for further examples.


\subsection{Grothendieck construction}


The relation between 2-term representations up to homotopy and VB-groupoids is a linear smooth version of the classical Grothendieck correspondence between fibered categories and pseudofunctors, see \cite[\S 5]{g} or the more recent \cite[\S 3.1]{v}. 


Given a 2-term representation up to homotopy $\u G\action (E\oplus C[1])$, its {\bf Grothendieck construction} $\u \Gamma=t^* C\oplus s^* E\toto E$ is a VB-groupoid over $\u G$, defined as a semi-direct product groupoid \cite{gm}. More precisely, the structure maps of the Lie groupoid $t^*C\oplus s^*E\toto E$ are defined by
\begin{align*}
 \tilde{s}(c,g,e)=e, \quad \tilde{t}(c,g,e)=\partial(c)+\rho^E_g(e), \quad \tilde{u}_e=\left(0^C(x),u(x), e\right), \ e\in E_x \nonumber \\
(c_1,g_1,e_1)\cdot (c_2,g_2,e_2)=\left(c_1+\rho^C_{g_1}(c_2)-\gamma_{g_1,g_2}(e_2),g_1g_2,e_2\right)
\nonumber \\
(c,g,e)^{-1}=(-\rho^C_{g^{-1}}(c)+\gamma_{g^{-1},g}(e),g^{-1},\partial(c)+\rho^E_g(e)). 
\end{align*}


The main result of \cite{gm} shows that the previous construction yields a 1-1 correspondence between isomorphism classes. Our first theorem is an upgrade of that result, showing that it is in fact an equivalence of categories. This is crucial for us, for we are interested in studying the category of VB-groupoids as an invariant.

\begin{theorem}\label{thm:Grothendieck}
The Grothendieck construction is functorial, and sets an equivalence of categories
$$Rep^\infty_{\text{2-term}}(\u G)\to VB(\u G).$$
\end{theorem}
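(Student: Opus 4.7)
The plan is to separate the claim into three sub-claims: functoriality (the construction acts on morphisms), essential surjectivity (every VB-groupoid is isomorphic to a Grothendieck construction), and full faithfulness (every VB-map between Grothendieck constructions comes from a unique ruth morphism). Essential surjectivity is essentially the content of Gracia-Saz and Mehta's bijection on isomorphism classes \cite{gm}: given a VB-groupoid $\u\Gamma\to\u G$ with core $C$ and unit bundle $E$, a choice of unital cleavage $\Sigma$ produces a splitting $\Gamma\cong t^*C\oplus s^*E$, and reading off the groupoid operations through this splitting yields exactly the tuple $(\partial,\rho^E,\rho^C,\gamma)$ of Proposition \ref{prop:breaking}, together with a canonical isomorphism to its Grothendieck construction. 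So the heart of the theorem is the morphism side.

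For functoriality, given a ruth morphism $\Phi=(\Phi_E,\Phi_C,\mu)$ as in Proposition \ref{prop:ruthmap}, I would define the candidate VB-map between the associated semi-direct product VB-groupoids by
\[
\widetilde{\Phi}(c,g,e)=\bigl(\Phi_C(c)+\mu_g(e),\,g,\,\Phi_E(e)\bigr).
\]
The point is then to match the four equations of Proposition \ref{prop:ruthmap} with the four structural compatibilities that define a VB-map: equation (1) corresponds to preservation of target on the core (i.e. on units followed by $\partial$), equations (2) and (3) together give preservation of target and unital structure for general elements, and equation (4) is exactly the multiplicativity $\widetilde{\Phi}((c_1,g_1,e_1)\cdot(c_2,g_2,e_2))=\widetilde{\Phi}(c_1,g_1,e_1)\cdot\widetilde{\Phi}(c_2,g_2,e_2)$, with the cross-term $\mu_{g_1}\rho^E_{g_2}e_2+\rho^{C'}_{g_1}\mu_{g_2}(e_2)-\mu_{g_1g_2}(e_2)$ being precisely what measures the discrepancy between $\gamma$ and $\gamma'$ transported by $\Phi_E,\Phi_C$. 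Preservation of inversion is then automatic from the other conditions. Composition of ruth morphisms, under the $C^\bullet(\u G)$-module description of Proposition \ref{prop:ruthmap}, matches composition of VB-maps essentially by definition, so functoriality falls out.

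For full faithfulness I would run the inverse construction. Given a VB-map $\Psi:\u\Gamma\to\u\Gamma'$ between two Grothendieck constructions, restrict $\Psi$ to the units $E\subset\Gamma$ to obtain $\Phi_E:E\to E'$, and restrict to the core $C=\ker(s)|_M\subset\Gamma$ to obtain $\Phi_C:C\to C'$. To extract $\mu$, evaluate $\Psi$ on the canonical cleavage element $(0,g,e)$ of the semi-direct product: since both $\Psi(0,g,e)$ and $(0,g,\Phi_E(e))$ project to the same arrow $g$ of $\u G$ and have the same source $\Phi_E(e)$, their difference lies in $\ker\tilde{s}'=\tilde{t}'^{*}C'$, and evaluating this difference at the core defines $\mu_g(e)\in C'$. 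One then checks linearity and smoothness in $(g,e)$, and verifies that the defining equations of a VB-map force precisely the four identities of Proposition \ref{prop:ruthmap}. Finally, the two assignments $\Phi\mapsto\widetilde{\Phi}$ and $\Psi\mapsto(\Phi_E,\Phi_C,\mu)$ are mutual inverses by construction.

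The step I expect to be the main obstacle is the bookkeeping in the multiplicativity/curvature check: once one expands both sides of $\widetilde{\Phi}((c_1,g_1,e_1)\cdot(c_2,g_2,e_2))=\widetilde{\Phi}(c_1,g_1,e_1)\cdot\widetilde{\Phi}(c_2,g_2,e_2)$ using the semi-direct product multiplication, one gets a number of terms involving $\rho^C,\rho^{C'},\gamma,\gamma',\Phi_C,\mu$, and it is in reassembling these into a single identity of the form of equation (4) of Proposition \ref{prop:ruthmap} that one really sees why the curvature constraint is the right one. Everything else is routine, and the $C^\bullet(\u G)$-linearity perspective of Proposition \ref{prop:ruthmap} makes the conceptual content of the equivalence transparent: it is simply the decomposition of the chain map $\Phi:C(\u G,\E)\to C(\u G,\E')$ into its four bihomogeneous components, with the component of negative vertical degree vanishing for grading reasons.
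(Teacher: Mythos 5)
Your proposal is correct and follows essentially the same route as the paper: essential surjectivity is delegated to the cleavage-splitting of \cite{gm}, and full faithfulness rests on the same decomposition $\Phi(c,g,e)=(\Phi_C(c)+\mu_g(e),g,\Phi_E(e))$ of a vector bundle map over $\id_G$, matched against preservation of the structure maps. The only (inessential) difference is bookkeeping: you extract the third identity of Proposition \ref{prop:ruthmap} from multiplicativity (taking $\tilde e=0$) and treat inversion as automatic, whereas the paper derives it from preservation of inverses; both accountings are valid.
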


\begin{proof}
We know that this functor is essentially surjective, since, given a VB-groupoid, by picking a cleavage, we can set an isomorphism with the Grothendieck construction of a 2-term ruth (cf. \cite{gm}). We will review this later. Let us now focus in showing that  the functor is fully faithful.

Let $\u G\action\mathcal{E}=E\oplus C[1]$ and $\u G\action\mathcal{E'}=E'\oplus C'[1]$ be 2-term representations up to homotopy, and let $\Gamma=t^*C\oplus s^*E\toto E$ and $\Gamma'=t^*C'\oplus s^*E'\toto E'$ be the VB-groupoids associated to them. We have to show that there is natural bijection 
$$\mathrm{Hom}_{Rep^{\infty}_{\text{2-term}}(\u G)}(\mathcal{E},\mathcal{E'})\cong \mathrm{Hom}_{VB(\u G)}(\Gamma,\Gamma').$$

First, observe that any vector bundle map $\Gamma \to  \Gamma'$ covering $\id:G\to G$ is the same as a pair of vector bundle maps $\Phi_E:E\to E'$, $\Phi_C:C\to C'$ and an element $\mu\in C^1(\u G,E\to C')$, as it follows from the following equation:
$$\Phi(c,g,e)=(\Phi_C(c)+\mu_g(e),g,\Phi_E(e)).$$

We will show that $\Phi$ is a VB-map, i.e. it commutes with the groupoid structure maps, if and only if the components $\Phi_E,\Phi_C$ and $\mu$ define a morphism of 2-term representations up to homotopy $\mathcal{E}\to \mathcal{E'}$, i.e. they satisfy the identities in \ref{prop:ruthmap}.

From the equations
\begin{align*}
\Phi\circ t(c,g,e) &=(\Phi_E\circ \partial)(c)+\Phi_E(\rho^E_ge)\\
t\circ \Phi(c,g,e) &=(\partial'\circ\Phi_C)(c)+\partial'(\mu_g(e))+\rho^{E'}_g\Phi_E(e)
\end{align*}
it follows that $\Phi$ commutes with the target map if and only if 
the first and second conditions in \ref{prop:ruthmap} hold. 

From the equations
\begin{align*}
\Phi((c,g^{-1},0)^{-1}) & = \Phi(-\rho^C_g,g,\partial(c))=(-\Phi_C\rho^C_gc+\mu_g\partial(c),g,\Phi_E\partial(c))\\
(\Phi(c,g^{-1},0))^{-1} &= (\Phi_C(c),g^{-1},0)^{-1}=(-\rho^C_g\Phi_C(c),g,\partial'\Phi_C(c))
\end{align*}
we conclude that $\Phi$ commutes with inversion if and only if 
the first and third equations in \ref{prop:ruthmap} hold.

Finally, from the equations
\begin{align*}
\Phi((c,g,e)(\tilde{c}.h,\tilde{e})) = &\Phi(c+\rho^C_g\tilde{c}-\gamma_{g,h}\tilde{e},gh,\tilde{e})\\
=& (\Phi_C(c)+\Phi_C(\rho^C_g\tilde{c})-\Phi_C(\gamma_{g,h}\tilde{e})+\mu_{gh}\tilde{e},gh,\Phi_E(\tilde{e}))
\end{align*}
and
\begin{align*}
\Phi(c,g,e)\Phi(\tilde{c},h,\tilde{e})=&(\Phi_C(c)+\mu_ge,g,\Phi_E(e))(\Phi_C(\tilde{c})+\mu_h\tilde{e},h,\Phi_E(\tilde{e}))\\
=& (\Phi_C(c)+\mu_ge+\rho^{C'}_g\Phi_C(\tilde{c})+\rho^{C'}_g\mu_h\tilde{e}-\gamma'_{g,h}\Phi_E(\tilde{e})      ,gh,\Phi_E(\tilde{e}))
\end{align*}
we have that $\Phi$ preserves the multiplication if and only if the following equation holds:
$$\Phi_C(\rho^C_c\tilde{c})-\Phi_C(\gamma_{g,h}\tilde{e})+\mu_{gh}\tilde{e}=\mu_ge+\rho^{C'}_g\Phi_C(\tilde{c})+\rho^{C'}_g\mu_h\tilde{e}-\gamma'_{g,h}\Phi_E(\tilde{e})
$$
Since $(c,g,e)$ and $(\tilde c,h,\tilde e)$ are composable arrows, we have that $e=\partial(\tilde{c})+\rho^E_h\tilde{e}$, and we can rewrite the equation as
$$\Phi_C(\rho^C_c\tilde{c})-\Phi_C(\gamma_{g,h}\tilde{e})+\mu_{gh}\tilde{e}=\mu_g(\partial(\tilde{c})+\rho^E_h\tilde{e})+\rho^{C'}_g\Phi_C(\tilde{c})+\rho^{C'}_g\mu_h\tilde{e}-\gamma'_{g,h}\Phi_E(\tilde{e})
$$
Either assuming that $\Phi$ is a VB-map or that its components are a morphism of ruths, we already show that $\Phi_C(\rho^C_gc)-\rho^{C'}_g\Phi_C(c)=\mu_g\partial(c)$. Modulo this identity, we have that $\Phi$ commutes with multiplication if and only if
$$\mu_g\rho^{E}_h\tilde{e}+\rho^{C'}_g\mu_h\tilde{e}-\gamma'_{g,h}\Phi_E(\tilde{e})=\mu_{gh}\tilde{e}-\Phi_C(\gamma_{g,h}\tilde{e}),
$$
which is just the fourth and last condition of a morphism of representations up to homotopy, as seen in \ref{prop:ruthmap}. 

Summarizing, we have shown that a VB-map yields a morphism of ruths, and that a morphism of ruth yields a map $\Phi$ commuting with multiplication, inverse and target, and therefore, units and source as well. 
\end{proof}


Let us say a few words about how to break a VB-groupoid into a 2-term ruth, as promised. This was done in detail in \cite{gm}. We propose here a different approach that will be crucial for us later.
Recall that the {\bf arrow groupoid} $\u G^I$ of a Lie groupoid $\u G$ is a new Lie groupoid whose objects are the arrows of $\u G$, and where an arrow $g'\from g$ is a triple of composable arrows $(g',h,g)$ in $\u G$, viewed as a commutative square as before:
$$\begin{matrix}
\xymatrix@R=10pt{
x' \ar[d]_{g'}& x \ar[d]^{g} \ar[l]  \\
y' & y \ar[l] \ar[lu]^h}
\end{matrix} \qquad \leftrightharpoons \qquad
(y' \xfrom{g'} x') \xfrom{(g',h,g)}(y\xfrom g x)$$
The composition on $\u G^I$ is given by $(g'',h',g')(g',h,g)=(g'',h'g'h,g)$. The Lie groupoid maps $\sigma,\tau:\u G^I\to \u G$  and $\mu:\u G\to\u G^I$ corresponding to the source, target and unit are defined, at the level of arrows, by the formulas $\sigma(g',h,g)=g$, $\tau(g',h,g)=g'$, and $\mu(g)=(g,g^{-1},g)$. See \cite[\S 4.1]{dh} for further details. 

\begin{lemma}\label{lemma:cleavage-as-map}
There is a 1-1 correspondence between cleavages $\Sigma$ on $\u\Gamma$ and VB-maps $\rho:\sigma^*\u\Gamma\to\u\tau^*\Gamma$ over $\u G^I$ satisfying $\mu^*\rho=\id$.
\end{lemma}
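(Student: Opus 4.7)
The plan is to unravel both sides into concrete fibrewise data and match them directly. First I unpack what a VB-map $\rho\colon\sigma^*\u\Gamma\to\tau^*\u\Gamma$ over $\u G^I$ amounts to: using the description of the base-changes, $\sigma^*\u\Gamma$ has arrow-fibre $\Gamma_g$ over $(g',h,g)\in G^I$ while $\tau^*\u\Gamma$ has $\Gamma_{g'}$, so $\rho$ is equivalent to a smooth family of linear maps $\rho_{(g',h,g)}\colon\Gamma_g\to\Gamma_{g'}$, together with an object-level counterpart $\rho^0_g\colon E_{s(g)}\to E_{t(g)}$, intertwining source, target, composition and units. The normalization $\mu^*\rho=\id$ becomes $\rho_{\mu(g)}=\id_{\Gamma_g}$ and $\rho^0_{u_x}=\id_{E_x}$.

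Given such a $\rho$ I extract a cleavage by restricting to the ``unit-source'' family of arrows $(g,u_{s(g)},u_{s(g)})$ of $\u G^I$, which connects $u_{s(g)}$ with $g$. Applying $\rho$ at this arrow to the unit $u_e\in\Gamma_{u_{s(g)}}$ associated to $e\in E_{s(g)}$ produces
\[
\Sigma(g,e)\;:=\;\rho_{(g,u_{s(g)},u_{s(g)})}(u_e)\in\Gamma_g.
\]
Compatibility of $\rho$ with the source map, together with $\rho^0_{u_{s(g)}}=\id$, forces $s\Sigma(g,e)=e$, and smoothness is inherited from $\rho$; hence $\Sigma$ is a unital linear section of $s\colon\Gamma\to s^*E$, i.e.\ a cleavage.

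Conversely, from a cleavage $\Sigma$ I reconstruct $\rho$ by ``transport along the square''. At the object level I set $\rho^0_g(e)=t\Sigma(g,e)$, the quasi-representation induced by $\Sigma$; at the arrow level, $\rho_{(g',h,g)}\colon\Gamma_g\to\Gamma_{g'}$ sends $\gamma$ to the element whose forced source $\rho^E_{hg}(s\gamma)$ and target $\rho^E_{g'h}(t\gamma)$ are dictated by the compatibility above, and whose core ambiguity is filled by transporting the core part $c_\gamma=\gamma-\Sigma(g,s\gamma)$ via the induced pseudo-representation $\rho^C_{g'h}$, corrected by the appropriate curvature terms. When $(g',h,g)=\mu(g)=(g,g^{-1},g)$ all transports and corrections collapse to the identity, so $\mu^*\rho=\id$.

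That the two constructions are mutually inverse follows by direct inspection on the ``unit-source'' arrows used to extract the cleavage, and by multiplicativity: every $(g',h,g)$ in $\u G^I$ factors (up to identities and inverses) as a composition of such arrows, so $\rho$ is determined by its values on them, which is precisely the cleavage. The main technical step is verifying that the formula defining $\rho$ respects the composition $(g'',h',g')(g',h,g)=(g'',h'g'h,g)$ in $\u G^I$; this reduces to the identities between $\partial$, $\rho^E$, $\rho^C$ and the curvature stated in Proposition~\ref{prop:breaking}, i.e.\ the structure equations of the 2-term representation up to homotopy determined by $\Sigma$ via the Grothendieck construction of Theorem~\ref{thm:Grothendieck}.
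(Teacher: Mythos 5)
Your overall strategy --- extract $\Sigma$ by evaluating $\rho$ on the arrows $(g,u_{s(g)},u_{s(g)})$ at unit elements, and reconstruct $\rho$ from $\Sigma$ by transport along the square --- is the same as the paper's, and the extraction half is essentially correct. The reconstruction half, however, has two problems. First, the fibre identification is off: over the arrow $(g',h,g)$ of $\u G^I$ the arrow fibre of $\sigma^*\u\Gamma$ is $\Gamma_{hg}$ (arrows from $E_{s(g)}$ to $E_{s(g')}$, covering the top of the square) and that of $\tau^*\u\Gamma$ is $\Gamma_{g'h}$, not $\Gamma_g$ and $\Gamma_{g'}$. With your identification a family $\Gamma_g\to\Gamma_{g'}$ cannot assemble into a groupoid morphism over $\u G^I$: an element of $\Gamma_g$ has target in $E_{t(g)}$, whereas the object fibre of $\sigma^*\u\Gamma$ over the target object $g'$ is $E_{s(g')}$, and these differ unless $h$ is a unit. (The two conventions coincide on the arrows $(g,u,u)$ used for the extraction, which is why that half survives; the confusion is invited by the loose inline formulas $\sigma(g',h,g)=g$, $\tau(g',h,g)=g'$, but the correct reading is forced by the groupoid structure of the base-change.)

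Second, and more seriously, you never actually write down $\rho_{(g',h,g)}$: ``transporting the core part via $\rho^C$, corrected by the appropriate curvature terms'' is precisely where the content of the lemma sits, and the claim that multiplicativity then ``reduces to'' the identities of Proposition \ref{prop:breaking} is a nontrivial computation you have not carried out. The point of the lemma is that no such computation is needed: for $v\in\Gamma_{hg}$ with source $e$ and target $e'$ one simply sets
$$\rho\bigl((g',h,g),v\bigr)=\Sigma(g',e')\,v\,\Sigma(g,e)^{-1},$$
which visibly covers $g'h$, has source $t\Sigma(g,e)=\rho_0(g,e)$ and target $t\Sigma(g',e')=\rho_0(g',e')$, and is functorial in $(g',h,g)$ because conjugation is --- no core decomposition or curvature enters. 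Your uniqueness argument (every arrow of $\u G^I$ factors as $(g',u,u)\circ\mu(hg)\circ(g,u,u)^{-1}$, with the middle factor killed by $\mu^*\rho=\id$) is sound once the conventions are fixed, but as written the converse direction of your proof has a genuine gap.
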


\begin{proof}
This is subtle, but tautological, and it is essentially the same computation done in \cite[Prop. 2.2.3]{dh2}. 
Given $\Sigma$, we can define $\rho:\sigma^*\u\Gamma\to\tau^*\u\Gamma$ via the formulas
$$\rho(g,e)=t\Sigma(g,e) \qquad \rho((g',h,g),v)=\Sigma(g',e')v\Sigma(g,e)^{-1},$$
where $e\in E_x,e'\in E_{x'}$, \smash{$y\xfrom g x$} and \smash{$y'\xfrom{g'} x'$} are arrows in $G$, and \smash{$g'\xfrom{(g',h,g)}g$} is an arrow in $\u G^I$.
Since $\Sigma$ is unital the VB-map $\rho$ satisfies $\mu^*\rho=\id$.

Conversely, given $\rho$, we can define $\Sigma(g,e)=\rho((g,\id_x,\id_x),\id_e)$. This arrow must start in $e$ because $\mu^*\rho=\id$, and must project over $g$, hence $\Sigma$ is a cleavage. It is easy to check that it is unital.
Both constructions are mutually inverse.
\end{proof}


Given $\u\Gamma$ and fixing a (unital) cleavage $\Sigma$, the VB-map $\rho:\sigma^*\u\Gamma\to\tau^*\u\Gamma$ in lemma \ref{lemma:cleavage-as-map} clearly encodes pseudo-representations $\rho^E$, $\rho^C$ compatible with the anchor map $\partial:C\to E$, by defining $\rho^E_g(e)=\rho(g,e)$ and $\rho^C_g(v)=\rho(\id_g,v)$. 
Moreover, a curvature tensor $\gamma$ controlling the associativity is easily defined out of $\Sigma$ as
$$\gamma_{g,h}(e)=\Sigma(g,\rho_h(e))\Sigma(h,e)\Sigma(gh,e)^{-1}-ut\Sigma({gh},e)$$
so as to obtain an induced representation up to homotopy, fulfilling Proposition \ref{prop:breaking}. 


\section{VB-Morita maps}

We develop here the characterization of VB-Morita maps that play a central role in our paper. We provide a rapid overview to Morita maps, recalling notions and results, and refer to \cite{dhf} for more on Morita equivalences and differentiable stacks.

\medskip




From the stack perspective, a Lie groupoid is a stack endowed with a presentation, and from the groupoid viewpoint, a (differentiable) stack is the Morita class of a Lie groupoid (see eg \cite{bx,mp}). 
Morita equivalences of Lie groupoids can be defined either by principal bibundles or by fractions of Morita maps. A {\bf Morita map} $\phi:\u G\to \u G'$ is a map that is \emph{fully faithful} and \emph{essentially surjective}, in the sense that the source/target map define a good fibered product of manifolds $G =(M\times M)\times_{(M'\times M')} G'$  and that the map
$G'\times_{M'}M\to M$, \smash{$(y\xfrom{g} \phi(x),x)\mapsto y$} is a surjective submersion (\cite[2.2]{dh}).


It is easy to see that a Morita map yields isomorphisms on the isotropy groups and a homeomorphism on the orbit spaces. 
This result was improved with the following \emph{characterization of Morita maps}, that sheds light on the notion of differentiable stack, and will play a crucial role for us.
Given $\phi:\u G\to \u G'$ and given $x\in M$, we write
$$\bar\phi:M/G\to M'/G' \qquad \phi_x:(G_x\action N_x(O))\to(G'_{\phi(x)}\action N_{\phi(x)}(O))$$ 
for the induced maps between the orbit spaces and the normal representations.

\begin{proposition}[cf. \cite{dh,dhf}]\label{prop:criterion}
Let $\phi:\u G\to\u G'$ be a map. Then
\begin{enumerate}[(i)]
 \item $\phi$ is fully faithful if and only if $\bar\phi$ is injective and $\phi_x$ is an isomorphism on the isotropy and monomorphism in the normal directions, for every $x\in \tilde M$;
 \item $\phi$ is essentially surjective if and only if $\bar\phi$ is surjective and $\phi_x$ is an epimorphism for every $x\in \tilde M$; in such a case, the map $\bar\phi$ is open.
\end{enumerate}
Thus, $\phi$ is Morita if and only if $\bar\phi$ is a homeomorphism and $\phi_x$ is an isomorphism for every $x\in \tilde M$.
\end{proposition}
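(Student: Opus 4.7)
The plan is to handle the two halves (i) and (ii) separately and then combine them. For (i), the key is to decompose \emph{fully faithful} into a set-theoretic part and a smooth part. The fibered product condition $G \cong (M\times M)\times_{M'\times M'} G'$ encodes both (a) a bijection $G(x,y)\cong G'(\phi(x),\phi(y))$ for every pair $x,y$, and (b) a transversality between $\phi\times\phi$ and $(s',t')$, reflected at the level of tangent spaces. From (a), setting $x=y$ gives $G_x\cong G'_{\phi(x)}$, and lifting an arrow $\phi(x)\to\phi(y)$ in $G'$ to one in $G$ shows that $\bar\phi$ is injective. The normal-monomorphism part of $\phi_x$ comes from (b) localized at unit arrows: any $v\in T_xM$ with $T\phi(v)\in T_{\phi(x)}O'$ arises from some $X'\in T_{u'(\phi(x))}G'$ with $Ts'(X')=0$ and $Tt'(X')=T\phi(v)$, and the fibered product identification of tangent spaces produces a lift $X\in T_{u(x)}G$ with $Tt(X)=v$, forcing $v\in T_xO$. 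Conversely, starting from the three conditions, I would first use orbit injectivity plus the isotropy isomorphism to exhibit a set-theoretic bijection $G\to (M\times M)\times_{M'\times M'} G'$, and then use the normal-monomorphism to verify the transversality and a tangent-space dimension count to upgrade it to a diffeomorphism.

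For (ii), I would reformulate \emph{essentially surjective} as the statement that the map $p:G'\times_{M',s',\phi} M \to M'$, $(g',x)\mapsto t'(g')$, is a surjective submersion. Surjectivity of $p$ amounts directly to $\bar\phi$ being surjective. The submersion condition, computed at the unit point $(u'(\phi(x)),x)$, unpacks exactly to $T_{\phi(x)}M' = T_{\phi(x)}O' + T\phi(T_xM)$, i.e.\ $\phi_x$ is an epimorphism on the normal directions; a parallel argument at non-unit arrows gives the epimorphism on isotropy, yielding that $\phi_x$ is an epimorphism in the relevant sense. The converse direction reconstructs the submersion property by transporting tangent vectors along $s'$-fibres and invoking the normal-epimorphism at unit points. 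The openness of $\bar\phi$ then follows as a corollary: $p$ is open as a surjective submersion, so its composition with the orbit projection $M'\to M'/G'$ is open, and this composition factors through $\bar\phi$.

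Combining (i) and (ii) yields the final characterization: $\bar\phi$ is a continuous open bijection, hence a homeomorphism, and at each $x$, $\phi_x$ is simultaneously a monomorphism and an epimorphism, hence an isomorphism of normal representations.

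The main obstacle, in my view, is the careful tangent-space bookkeeping needed to translate between smooth transversality at unit arrows and the normal-monomorphism condition, and dually between fibre-submersion and the normal-epimorphism condition. Specifically, one must identify the tangent space of $(M\times M)\times_{M'\times M'} G'$ at a unit with the appropriate iterated pullback and match it cleanly with the normal representation data. Since the result is attributed to \cite{dh, dhf}, I would lean on those references for the detailed differential calculations and confine the write-up to the conceptual thread above.
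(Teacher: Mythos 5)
The paper does not actually prove this proposition: it is stated with the citation ``cf.\ \cite{dh,dhf}'' and used as a black box, so there is no internal proof to compare against. Your outline reproduces the standard argument from those references — decomposing full faithfulness into the set-theoretic bijection on hom-sets plus the identification of tangent squares, reformulating essential surjectivity as the map $G'\times_{M'}M\to M'$ being a surjective submersion, and doing the bookkeeping via the exact sequence $0\to T_gG(y,x)\to T_gG\to T_yM\times T_xM\to N_yM\to 0$ that the paper itself recalls — and it is essentially correct, so relying on \cite{dh,dhf} for the differential details is consistent with what the authors themselves do.

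One genuine misstep to fix: in part (ii) you claim that ``a parallel argument at non-unit arrows gives the epimorphism on isotropy.'' Essential surjectivity does \emph{not} control the isotropy groups — the inclusion of a point into a nontrivial Lie group $(\ast\toto\ast)\to(K\toto\ast)$ is essentially surjective but is certainly not surjective on isotropy. Consequently ``$\phi_x$ is an epimorphism'' in statement (ii) must be read as surjectivity of the linear map $N_x(O)\to N_{\phi(x)}(O)$ only (this is how it is stated in \cite{dhf}); with that reading your submersion computation at unit points is exactly what is needed, and the spurious isotropy claim should simply be deleted. A smaller point: in the converse of (i), what must be verified is not transversality of $\phi\times\phi$ with $(t',s')$ (the normal \emph{mono}morphism cannot give that) but rather that the tangent squares are fibered products of vector spaces, which follows from the infinitesimal isotropy isomorphism together with the normal monomorphism via the two exact sequences; your ``dimension count'' should be phrased in those terms.
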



%

In this paper we are concerned with the study of Morita maps between VB-groupoids. A VB-map $(\Phi,\phi):\u\Gamma\to\u\Gamma'$ is {\bf VB-Morita} if $\Phi$ is Morita. We are going to apply the criterion given in Proposition \ref{prop:criterion} to the particular case of VB-groupoids, yielding a characterization of VB-Morita maps in terms of fiber and base data. We are going to analize first the case of the fibers, that are very simple Lie groupoids.


A {\bf 2-vect} $\u V$ is a category object in the category of vector spaces, or equivalently, a VB-groupoid over the point groupoid $\ast\toto\ast$. A 2-vect $\u V$ is equivalent to a 2-term chain complex $V_1\xto\partial V_0$ via the Dold-Kan correspondence. This complex agrees with the core complex studied in general. 

\begin{lemma}\label{lemma:2vects}
Given $\phi:\u V\to \u W$ a morphism of 2-vects, the following are equivalent:
\begin{enumerate}
 \item $\phi$ is a (categorical) equivalence;
 \item $\phi$ is VB-Morita;
 \item $\phi$ induces a quasi-isomorphism between the core complex.
\end{enumerate}
\end{lemma}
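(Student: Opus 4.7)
The plan is to deduce all three equivalences from Proposition \ref{prop:criterion} together with a direct inspection of the very simple structure of a 2-vect. Specifically, if $\u V = (V_1 \toto V_0)$ is a 2-vect with core complex $(C \xto{\partial} V_0)$ obtained via Dold--Kan, then the orbit of a point $v_0 \in V_0$ is exactly the affine subspace $v_0 + \im(\partial)$, while the isotropy at any $v_0$ is the translation-invariant subspace $\ker(\partial)$. Consequently the orbit space $V_0/\u V$ equals $H_0$ of the core complex, the isotropy equals $H_1$, and the normal representation $N_{v_0}(O)$ at any $v_0$ again equals $H_0$ (with trivial isotropy action, since the base is a single vector space).

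For (2) $\Rightarrow$ (3), I would note that a VB-Morita map $\phi:\u V\to\u W$ induces, by Proposition \ref{prop:criterion}, isomorphisms between isotropies and between normal representations at each $v_0$. Under the identifications above, these are exactly the maps $H_1(V) \to H_1(W)$ and $H_0(V) \to H_0(W)$ between the core complex homologies, i.e.\ $\phi$ is a quasi-isomorphism. Conversely, given (3), the same dictionary shows that $\phi_{v_0}$ is an iso on isotropy and on normal directions at every $v_0$; the induced map $\bar\phi$ is then a linear bijection between the finite-dimensional quotient spaces $H_0(V)$ and $H_0(W)$, hence a homeomorphism, and Proposition \ref{prop:criterion} delivers (2).

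For (1) $\iff$ (2) it suffices to unravel the Lie-groupoid definitions in the linear setting. A VB-map $\phi:\u V\to \u W$ is Morita iff the pullback square expressing fully faithfulness is a good fiber product and the target--projection $W_1\times_{W_0} V_0 \to W_0$ is a surjective submersion. In finite-dimensional vector spaces every surjective linear map is a submersion and every linear fiber product is automatically good, so these two smoothness-laden conditions collapse to fully faithfulness and essential surjectivity of the underlying functor of categories---that is, to $\phi$ being a categorical equivalence.

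The only point I expect to demand any care is the assertion that $\bar\phi$ is a homeomorphism rather than merely a bijection. This is immediate in the present linear, finite-dimensional setting, since $\bar\phi$ is a linear isomorphism between quotient vector spaces equipped with the quotient topology, but it is the one piece of Proposition \ref{prop:criterion} that is not literally a homological statement, so it should be flagged explicitly.
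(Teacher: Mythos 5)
Your identification of the groupoid invariants of a 2-vect with the homology of its core complex (orbit of $v_0$ is $v_0+\im(\partial)$, isotropy is $\ker(\partial)$, orbit space and normal space are both $H_0$, with trivial normal action) is correct, and your derivation of $2\Leftrightarrow 3$ from Proposition \ref{prop:criterion} is sound, including the remark that $\bar\phi$ is automatically a homeomorphism once it is a linear bijection of finite-dimensional quotients. This is a genuinely different route from the paper, which instead runs the cycle $1\Rightarrow 2\Rightarrow 3\Rightarrow 1$ and concentrates all the work in $3\Rightarrow 1$, arguing that since every complex of vector spaces splits and Dold--Kan is a 2-equivalence, a quasi-isomorphism of 2-vects is a (linear) homotopy equivalence. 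Your version has the merit of making explicit the dictionary between normal representations and core homology that the paper uses silently here and again later (e.g.\ in the proofs of Theorem \ref{thm:VB-morita} and Lemma \ref{lemma:dirac}).

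There is, however, one genuine gap, and it sits exactly where the paper's proof puts its only real content: your step from ``fully faithful and essentially surjective as a functor'' to ``categorical equivalence'' in the direction $2\Rightarrow 1$. For an equivalence of 2-vects you must produce a quasi-inverse that is itself a morphism of 2-vects, i.e.\ a \emph{linear} functor, together with linear natural isomorphisms; the standard category-theoretic fact that ff $+$ eso implies equivalence only hands you an arbitrary (set-theoretic) quasi-inverse. The fix is short but must be said: essential surjectivity makes the linear map $W_1\times_{W_0}V_0\to W_0$, $(g,v)\mapsto t(g)$, surjective, and a \emph{linear} section of it (which exists because every epimorphism of vector spaces splits) selects $\psi(w)\in V_0$ and an isomorphism $w\to\phi(\psi(w))$ linearly in $w$; full faithfulness then determines $\psi$ on arrows. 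This splitting is precisely the ingredient the paper invokes, and it is not a formality: for general Lie groupoids the analogous step fails (a Morita map need not be an equivalence, as the paper stresses in Section 6), so the linear splitting is the reason the lemma is true at all.
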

\begin{proof}
Clearly 1 implies 2, and 2 implies 3. Conversely, since every sequence of vector spaces splits, and since the Dold-Kan correspondence is a 2-equivalence, a quasi-isomorphism between 2-vects is the same as a map homotopic to the identity, hence 3 implies 1.
\end{proof}


Given $\u \Gamma\to \u G$ a VB-groupoid, and given $x\in M$, we denote by $\u\Gamma_x$ the {\bf fiber} of $\u\Gamma$ over $x$.
This is a 2-vect, defined as the base-change of $\u\Gamma$ along the inclusion $\u\ast\xto x\u G$.
Next we develop exact sequences relating the isotropies and normal representations of the fiber, the total groupoid and the base of a VB-groupoid.

\begin{lemma}
Given $\u\Gamma\to\u G$ a VB-groupoid, $e\in E$, $\pi(e)=x$, there is an exact sequence of isotropy groups
$$1\to \Gamma_x(e,e) \to \Gamma(e,e) \to G(x,x)$$
and, when $e=0_x$, the latter map is surjective, and the natural section $g\mapsto 0_g$ allows us to express the isotropy of the total group as a semi-direct product:
$$\Gamma(0_x,0_x)\cong  \Gamma_x(0_x,0_x)\rtimes G(x,x)$$
\end{lemma}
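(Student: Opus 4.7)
The plan is to unpack the definition of the fiber $\u\Gamma_x$ and then exploit the linearity of the structural maps of the VB-groupoid, which forces the zero section to be a Lie groupoid morphism.

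First, I would recall that $\u\Gamma_x$ is the base-change of $\u\Gamma$ along the inclusion $x:\u\ast\to\u G$, so an arrow of $\u\Gamma_x$ is precisely an arrow $\xi\in\Gamma$ with $\pi(\xi)=\id_x$ and $s(\xi),t(\xi)\in E_x$. In particular, $\Gamma_x(e,e)$ sits inside $\Gamma(e,e)$ as the subset of arrows that project to $\id_x$. Since $\pi:\u\Gamma\to\u G$ is a Lie groupoid morphism, it restricts to a group homomorphism $\pi:\Gamma(e,e)\to G(x,x)$, and by the previous remark its kernel is exactly $\Gamma_x(e,e)$. This gives the exact sequence
$$1\to \Gamma_x(e,e)\to \Gamma(e,e)\to G(x,x)$$
with no further work.

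For the refinement at $e=0_x$, the key input is that the zero section $0:G\to \Gamma$ is itself a Lie groupoid morphism. This is immediate from the VB-groupoid axioms: every structural map of $\u\Gamma$ is a vector bundle morphism over the corresponding structural map of $\u G$, hence commutes with the zero sections, so $s(0_g)=0_{s(g)}$, $t(0_g)=0_{t(g)}$, $0_g\cdot 0_h=0_{gh}$, and $(0_g)^{-1}=0_{g^{-1}}$. In particular, for every $g\in G(x,x)$ the element $0_g$ lies in $\Gamma(0_x,0_x)$ and projects onto $g$, yielding surjectivity, while $g\mapsto 0_g$ provides a group-theoretic section of $\pi$.

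The semi-direct product decomposition then follows from the standard splitting lemma: a short exact sequence of groups equipped with a homomorphic section identifies the total group with the semi-direct product of the kernel and the quotient, the action being conjugation via the section. Concretely, $G(x,x)$ acts on $\Gamma_x(0_x,0_x)$ by $g\cdot\xi=0_g\,\xi\,0_g^{-1}$, which is well-defined because $\pi(0_g\xi 0_g^{-1})=\id_x$ and both source and target remain $0_x$. The only step that requires any care is the verification that the zero section respects all structural maps, but this is pure linearity and poses no real obstacle; there is no deeper difficulty in the argument.
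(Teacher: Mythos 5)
Your proof is correct and is exactly the argument the paper has in mind: it declares the lemma ``easy and left to the reader,'' and your write-up supplies the intended details — $\Gamma_x(e,e)$ is the kernel of the group homomorphism $\pi:\Gamma(e,e)\to G(x,x)$, and for $e=0_x$ the zero section is a groupoid morphism (by linearity of the structure maps), giving a homomorphic splitting and hence the semi-direct product. No gaps.
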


\begin{proof}
It is easy and left to the reader. This exact sequence of isotropy groups actually holds in any groupoid fibration.
\end{proof}


Given $G\toto M$ a Lie groupoid, the differential of the {\bf anchor map} $(t,s):G\to M\times M$ at a point $y\xfrom g x$ is ruled by the following exact sequence (cf. \cite{dh}):
$$0 \to T_gG(y,x) \to T_gG \to T_yM\times T_xM \to N_yM \to 0$$
where the last map sends $(w,v)$ to $[w]-g[v]$, here $g$ acts by the normal representation.
%

\begin{lemma}
Given $\u\Gamma\to \u G$ a VB-groupoid and given $e\in E$, there is a natural exact sequence
$$0\to T_{u(e)}\Gamma_{u(x)}(e,e)\to
T_{u(e)}\Gamma(e,e) \to T_xG(x,x) \xto\partial N_eE_x\to N_eE \to N_xM\to 0$$
and when $e=0_x$, then $\partial=0$ and the 0-section yields splittings
$$ T_{u(0_x)}\Gamma(0_x,0_x)=T_{u(0_x)}\Gamma_{u(x)}(0_x,0_x)\oplus T_xG(x,x)\qquad  N_{0_x}E = N_{0_x} E_x\oplus N_xM$$
\end{lemma}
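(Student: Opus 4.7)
The plan is to apply the snake lemma to the morphism of anchor sequences induced by $\pi:\u\Gamma\to\u G$. The VB-groupoid property implies that $\pi$ is a Lie groupoid fibration, so that $\Gamma\to G$ and $E\to M$ are vector bundle projections whose fibers over $u(x)$ and $x$ are the arrow and object manifolds of $\u\Gamma_x=\u\ast\times_{\u G}\u\Gamma$. At the unit $u(e)$ this yields short exact sequences of tangent spaces
$$0\to T_{u(e)}\Gamma_x\to T_{u(e)}\Gamma\to T_{u(x)}G\to 0, \qquad 0\to T_eE_x\to T_eE\to T_xM\to 0,$$
and doubling the second gives $0\to T_eE_x\oplus T_eE_x\to T_eE\oplus T_eE\to T_xM\oplus T_xM\to 0$. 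These short exact sequences fit into a commutative diagram with exact rows, where the vertical maps are the anchor differentials $(ds,dt)$ of $\u\Gamma_x$, $\u\Gamma$, and $\u G$.

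Next, I would identify the kernels and cokernels of the three vertical anchors. The kernels are the tangent spaces at the unit of the isotropy groups $\Gamma_x(e,e)$, $\Gamma(e,e)$, $G(x,x)$, since each isotropy is by definition the fiber of $(s,t)$ over the diagonal. The cokernels, by the anchor exact sequence recalled immediately before the lemma, are $N_eE_x$, $N_eE$, and $N_xM$. Feeding this into the snake lemma produces exactly the claimed six-term exact sequence, with $\partial$ being the connecting homomorphism; naturality of the snake lemma ensures naturality of the whole sequence.

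For $e=0_x$, I would exploit that the zero section $0:\u G\to\u\Gamma$ is a VB-groupoid morphism splitting $\pi$. Its restriction $g\mapsto 0_g$ to isotropies provides a canonical section of the map $T_{u(0_x)}\Gamma(0_x,0_x)\to T_xG(x,x)$, so exactness forces the connecting map $\partial$ to vanish and yields the direct sum decomposition of tangent isotropies. The analogous argument using the zero section $0:M\to E$ splits the right-hand normal sequence, producing $N_{0_x}E=N_{0_x}E_x\oplus N_xM$. The main subtlety lies in establishing exactness of the top row, which reduces to $\pi:\Gamma\to G$ being a vector bundle projection whose fiber over $u(x)$ coincides with $\Gamma_x$; once this is verified, the snake lemma does the rest mechanically.
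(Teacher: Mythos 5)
Your proposal is correct and is essentially the paper's own argument: the same two-row diagram of tangent exact sequences coming from the submersions $\Gamma\to G$ and $E\to M$, with the anchor differentials as vertical maps, the same identification of kernels with tangent isotropies and cokernels with normal spaces, the snake lemma, and the $0$-section splitting for $e=0_x$. No meaningful differences.
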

\begin{proof}
In the next diagram of vector spaces the rows are exact, for in a submersion, the tangent to the fiber is the kernel of the differential. 
$$\xymatrix{
0 \ar[r] & T_{u(e)}\Gamma_{u(x)} \ar[r] \ar[d] & T_{u(e)}\Gamma \ar[d] \ar[r] & T_{u(x)}G \ar[r] \ar[d] & 0\\
0 \ar[r] & T_eE_x\times T_eE_x \ar[r] & T_eE\times T_eE \ar[r] & T_xM\times T_xM \ar[r] & 0
}$$
It follows from the Snake lemma that the kernels and cokernels of the vertical maps fit into a long exact sequence. Since they identify with the tangent spaces to the isotropies and the normal directions to the orbits respectively, the result follows. In the case on which $e=0_x$, the horizontal sequences split by the 0-section of the VB-groupoid.
\end{proof}


We are now ready to prove our second theorem, asserting that VB-Morita maps are exactly those maps that are Morita on the base and on the fibers. As seen in Lemma \ref{lemma:2vects}, the latter is equivalent to requiring the map between the core sequences to be a quasi-isomorphism.

\begin{theorem}\label{thm:VB-morita}
A VB-map $(\Phi,\phi):\u\Gamma\to\u\Gamma'$ is VB-Morita if and only if the map on the base $\phi:\u G\to\u G'$ is Morita and the maps on the fibers $\Phi_x$ are Morita for all $x\in M$.
\end{theorem}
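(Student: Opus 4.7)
The plan is to apply the criterion in Proposition \ref{prop:criterion} directly to the VB-map $\Phi$, reducing VB-Morita to two conditions: $\bar\Phi:E/\Gamma\to E'/\Gamma'$ is a homeomorphism, and for every $e\in E$ the induced map $\Phi_e$ is an isomorphism on isotropy groups and on normal representations. I then want to show that each of these conditions decouples into a condition on $\phi$ and a family of conditions on the fiber maps $\Phi_x$, using the exact sequences established just before the theorem.

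For the pointwise conditions, fix $e\in E$ with $x=\pi(e)$ and write down the six-term exact sequence
\[0\to T_{u(e)}\Gamma_{u(x)}(e,e)\to T_{u(e)}\Gamma(e,e)\to T_xG(x,x)\xto{\partial} N_eE_x\to N_eE\to N_xM\to 0\]
together with the analogous sequence at $\Phi(e)\in E'$. The map $\Phi$ gives a commutative ladder between them. The outer four columns are controlled entirely by $\phi_x$ (positions involving $G$ and $M$) and by $(\Phi_x)_e$ (positions involving the fiber $\u\Gamma_x$), which are isomorphisms by hypothesis together with Lemma \ref{lemma:2vects}. The five-lemma then forces the two middle columns—the tangent to the isotropy of $\u\Gamma$ at $e$ and the normal direction $N_eE$—to be isomorphisms as well. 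A parallel argument using the left-exact sequence $1\to \Gamma_x(e,e)\to\Gamma(e,e)\to G(x,x)$ handles the isotropy groups themselves, where the image of the last map is the subgroup of $G(x,x)$ stabilizing $e$ under the quasi-action, and is matched by $\phi$ on both sides.

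For the orbit space statement, I would use that the projection $\pi:E\to M$ is $\Gamma$-equivariant, so induces a continuous surjection $E/\Gamma\to M/G$ covered by $\bar\Phi$ over $\bar\phi$, and whose fiber over $[x]$ is the quotient $E_x/\Gamma_x$. The fiberwise restriction of $\bar\Phi$ is exactly $\overline{\Phi_x}$. Since $\bar\phi$ is a homeomorphism and each $\overline{\Phi_x}$ is a homeomorphism (by fiber Morita, via Proposition \ref{prop:criterion} applied to $\Phi_x$), a standard gluing shows $\bar\Phi$ is a homeomorphism. The converse direction is symmetric: assuming $\Phi$ VB-Morita, restricting to the zero section and projecting out the linear fibers recovers that $\phi$ is Morita, while base-changing along $\u *\xto{x}\u G$ shows each $\Phi_x$ is Morita, since base-change preserves the pointwise and orbit-space criteria.

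The main obstacle I expect is the isotropy analysis for $e\ne 0_x$, where the short exact sequence does not split as it does on the zero section, so one cannot simply identify $\Gamma(e,e)$ with a semi-direct product. The cleanest remedy is to replace $G(x,x)$ by the stabilizer subgroup of $e$ and verify that this subgroup is preserved and identified correctly by $\phi$ and $\Phi_x$; once this is done, the diagram chase closes and, combined with the normal-direction five-lemma and the orbit-space argument, the characterization in Proposition \ref{prop:criterion} yields the theorem.
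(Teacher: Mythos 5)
Your overall strategy --- apply Proposition \ref{prop:criterion} to $\Phi$ and decouple the pointwise conditions via the isotropy and anchor sequences and the five-lemma --- is essentially the paper's, and the infinitesimal part of your converse direction is sound. The genuine gap is in the orbit-space step. The fiber of the induced surjection $E/\Gamma\to M/G$ over an orbit $[x]$ is \emph{not} $E_x/\Gamma_x$: two vectors $e_1,e_2\in E_x$ lie in the same $\Gamma$-orbit as soon as there is an arrow $e_1\to e_2$ lying over \emph{any} $g\in G(x,x)$, not only over $\id_x$. Concretely, if $\u G$ is a Lie group $G$ over a point and $\u\Gamma=(G\times E\toto E)$ is the action groupoid of a representation, then $\u\Gamma_x$ is the unit groupoid on $E$, so $E_x/\Gamma_x=E$, while the fiber of $E/\Gamma\to M/G=\ast$ is $E/G$. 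The correct fiber over $[x]$ is $E_x$ modulo the core translations \emph{and} the induced action of the full isotropy group $G(x,x)$ on $\mathrm{coker}(\partial)_x$, so your ``standard gluing'' from $\bar\phi$ and the $\overline{\Phi_x}$ does not close; one must additionally use that $\Phi$ intertwines these genuine isotropy representations on the cokernels. This is precisely where the paper's proof does real work: it fixes a cleavage to factor every arrow of $\u\Gamma$ canonically as a vertical arrow followed by a horizontal lift, obtaining the bijection $\Gamma(e_2,e_1)\cong\coprod_{g\in G(x_2,x_1)}\Gamma_{x_2}(e_2,\rho_g(e_1))$, and then deduces set-theoretic full faithfulness and essential surjectivity (hence bijectivity on orbit spaces) from those of $\phi$ and of the $\Phi_x$.

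Two smaller points. For the isotropy groups at $e\neq 0_x$ you correctly flag the failure of the splitting; your stabilizer fix can be made to work, but you must identify the image of $\Gamma(e,e)\to G(x,x)$ as the stabilizer of the class $[e]\in\mathrm{coker}(\partial)_x$ under the induced honest representation, and verify it is carried onto the corresponding stabilizer on the other side via the $\Phi$-equivariance of the cokernel map --- the paper sidesteps this by proving injectivity with the four-lemma on the left-exact sequence and surjectivity by an explicit lift of an isotropy arrow through the fiberwise full faithfulness. For the forward direction, ``base-change preserves the criteria'' and ``restricting to the zero section'' are stated too quickly: the splittings at $0_x$ do give that $\phi$ and $\Phi_x$ are isomorphisms on isotropies and normal directions (and for the fibers this suffices by Lemma \ref{lemma:2vects}), but the bijectivity of $\bar\phi$ on orbit spaces still needs its own small argument, which the paper supplies using the zero sections and the square relating $E/\Gamma$ and $M/G$.
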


\begin{proof}
Suppose that $\Phi$ is Morita.
Then $\Phi$ is an isomorphism on the isotropies and on the normal directions. 
By looking at the splitting isotropy sequence it follows that $\Phi_x,\phi$ are isomorphisms on the isotropies.
By looking at the splitting anchor sequence it follows that $\Phi_x,\phi$ are isomorphisms on the normal directions.
It follows that $\Phi_x$ is Morita for every $x$.
Regarding $\phi$, in light of the criterion \ref{prop:criterion}, it only remains to show that $\bar \phi:M/G\to M'/G'$ is bijective. It is clearly surjective because of the following diagram,
$$\xymatrix{
E/\Gamma \ar[r]^\cong \ar@{->>}[d] & E'/\Gamma' \ar@{->>}[d] \\
M/G \ar[r] & M'/G',
}$$
but injectivity is also easy, as it follows by looking at the 0-sections: $\bar 0_{E'}\circ\bar \phi=\bar \Phi\circ \bar 0_E$.

For the converse, suppose that $\Phi_x,\phi$ are Morita. 
Then they are isomorphisms on the isotropies and on the normal directions.
By looking at the anchor sequence, and by applying the 5-lemma, it follows that $\Phi$ is an isomorphism on the normal directions and induces infinitesimal isomorphisms on the isotropies. 
By looking at the isotropy sequence, and by applying the 4-lemma, it follows that $\Phi$ is injective on isotropies. 
Thus, again by criterion \ref{prop:criterion}, it remains to show that (i) $\Phi$ is surjective on isotropies, and (ii) $\Phi$ induces a bijection between the orbit spaces.

To show (i), let $e\in E$, and write $g=\pi(e)$, $e'=\Phi(e)$, $g'=\phi(g)=\pi(e')$. Given $v'\in\Gamma'(e',e')$ we seek for $v\in\Gamma(e,e)$ such that $\Phi(v)=v'$. 
Let \smash{$\tilde e\xfrom{w} e$} be any lift of $g$ with source $e$. 
Write $\Phi(\tilde e)=\tilde e'$ and $\Phi(w)=w'$. 
Then \smash{$e'\xfrom{v'w'^{-1}}\tilde e'$} is on the fiber $\Gamma'_{x'}$, and since $\Phi_x$ is fully faithful, we can take \smash{$e\xfrom{\tilde w}\tilde e$} on $\Gamma_x$ such that $\Phi(\tilde w)=v'w'^{-1}$. We can conclude now that by taking $v=\tilde w w$.

Regarding (ii), it will follow once we show that $\Phi$ is set-theoretically fully faithful and essentially surjective. 
To prove set-theoretic fully faithfulness, let us fix a cleavage  $\Sigma$ on $\Gamma$, and use it to canonically factor any arrow in $\Gamma$ as a vertical one followed by a horizontal one. Then, given $e_1,e_2\in E$, the following commutative square
$$\xymatrix{
\ar[r]^{\Phi} 
\ar[d]_{v\mapsto v\Sigma(\pi(v),e_1)^{-1}} 
\Gamma(e_2,e_1) &
\ar[d]^{v'\mapsto v'\Phi(\Sigma(\phi^{-1}(\pi(v)),e_1))^{-1}} 
\Gamma(e'_2,e'_1) \\
\underset{g\in G(x_2,x_1)}\coprod\Gamma_{x_2}(e_2,\rho_g(e_1)) \ar[r] & 
\underset{g'\in G'(x'_2,x'_1)}\coprod\Gamma'_{x'_2}(e'_2,\Phi(\rho_{\phi^{-1}(g')}(e_1)))
}$$
the vertical arrows are bijective, and since $\Phi_x,\phi$ are fully faithful, the bottom arrow also is.
To prove set-theoretic essential surjectivity we start with an object $e'\in E'$, we can use a cleavage $\Sigma'$ to connect it with some $\tilde e'$ over an object in the image $x'=\phi(x)$, and conclude by the fiberwise essential surjectivity.
\end{proof}


We close this section by deriving several immediate corollaries that will be used in the applications.

\begin{corollary}\label{cor:quasi-iso} 
Under the Grothendieck construction (cf. \ref{thm:Grothendieck}), quasi-isomorphisms of representations up to homotopy correspond to VB-Morita maps over $\u G$.
\end{corollary}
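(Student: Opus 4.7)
The plan is to combine Theorem \ref{thm:VB-morita} with Lemma \ref{lemma:2vects} and the fact that, under the Grothendieck correspondence of Theorem \ref{thm:Grothendieck}, the underlying two-term chain complex of a ruth $\u G\action E\oplus C[1]$ is exactly the core complex $\partial:C\to E$ of the associated VB-groupoid.

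First I would fix a VB-map $\Phi:\u\Gamma\to\u\Gamma'$ over $\u G$, so that the base map $\phi=\id_{\u G}$ is tautologically Morita. By Theorem \ref{thm:VB-morita}, $\Phi$ is VB-Morita if and only if each fiber map $\Phi_x:\u\Gamma_x\to\u\Gamma'_x$ is VB-Morita. Each $\Phi_x$ is a morphism of 2-vects, so by Lemma \ref{lemma:2vects} it is VB-Morita if and only if it induces a quasi-isomorphism between the corresponding core complexes.

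Next I would identify the core complex of the fiber $\u\Gamma_x$ with the fiber at $x$ of the core complex of $\u\Gamma$: base-changing $\u\Gamma$ along the inclusion $\u\ast\xto{x}\u G$ restricts the core sequence $\partial:C\to E$ to $\partial_x:C_x\to E_x$. Thus the collection of fiberwise quasi-isomorphisms $\{\Phi_x\}_{x\in M}$ coincides by definition with the VB-map $\Phi$ being a (fiberwise) quasi-isomorphism between the core complexes, i.e.\ a quasi-isomorphism in the sense recalled in Section~2.

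Finally, under the Grothendieck construction of Theorem \ref{thm:Grothendieck}, the two-term chain complex underlying a ruth $\u G\action E\oplus C[1]$ is, by construction, the core complex of the VB-groupoid $t^*C\oplus s^*E\toto E$, and a morphism of ruths restricts in degrees $0,1$ to the morphism $(\Phi_E,\Phi_C)$ of core complexes. Therefore a morphism of 2-term ruths is a quasi-isomorphism if and only if the associated VB-map is fiberwise a quasi-isomorphism of core complexes, which by the previous steps is equivalent to being VB-Morita. The proof is essentially a chain of equivalences; the only subtlety is checking that ``core complex of the fiber'' commutes with ``fiber of the core complex'', which is straightforward from the definition of base-change.
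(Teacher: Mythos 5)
Your argument is correct and is exactly the intended one: the paper states this as an immediate corollary of Theorem \ref{thm:VB-morita} combined with Lemma \ref{lemma:2vects}, with the base map being the identity and the fiberwise core complexes matching the two-term complex of the ruth under the Grothendieck construction. Nothing is missing.
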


\begin{corollary}\label{cor:VBpullback}
Let $\phi:\u G \to \u G'$ be a Morita map and $\u\Gamma'\to \u G'$ a VB-groupoid. Then the canonical bundle map $\phi^*\u\Gamma'\to \u\Gamma$ is a VB-Morita map.
\end{corollary}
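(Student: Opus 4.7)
The plan is to apply the characterization of VB-Morita maps in Theorem \ref{thm:VB-morita}, which reduces the problem to checking the Morita property on the base and on each fiber. Since $\phi^*\u\Gamma' \to \u\Gamma'$ covers $\phi:\u G \to \u G'$, and $\phi$ is Morita by hypothesis, the condition on the base is immediate. Hence the whole statement reduces to verifying that the induced map on fibers is a VB-Morita map of 2-vects for every $x\in M$.

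For this fiber step, I would exploit the functoriality of the base-change construction. The fiber $(\phi^*\u\Gamma')_x$ is defined as the base-change of $\phi^*\u\Gamma'$ along the inclusion $x:\u\ast \to \u G$. Since taking a groupoid-theoretic fiber product is functorial under composition of maps on the base, one has a canonical identification
\[
(\phi^*\u\Gamma')_x \;=\; x^*\phi^*\u\Gamma' \;\cong\; (\phi\circ x)^*\u\Gamma' \;=\; \u\Gamma'_{\phi(x)},
\]
and under this identification the map on fibers induced by the canonical projection $\phi^*\u\Gamma' \to \u\Gamma'$ is the identity. In particular, it is (trivially) a VB-Morita map of 2-vects, as it even induces the identity on core complexes, so Lemma \ref{lemma:2vects} applies at once.

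Combining both observations and invoking Theorem \ref{thm:VB-morita} yields the claim. The only subtle point is to make sure the canonical identification between iterated pullbacks really translates the fiber map to the identity; this is a routine check on the defining universal property of the groupoid-theoretic fiber product, so I do not expect any genuine obstacle. The corollary can then be stated as an essentially formal consequence of the main theorem of the section and the functoriality of base-change.
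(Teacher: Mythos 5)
Your proof is correct and is exactly the route the paper intends: the corollary is stated as an immediate consequence of Theorem \ref{thm:VB-morita}, with the base map being $\phi$ (Morita by hypothesis) and the fiber map over each $x\in M$ being the canonical isomorphism $(\phi^*\u\Gamma')_x\cong\u\Gamma'_{\phi(x)}$ coming from functoriality of base-change. Nothing to add.
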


\begin{corollary}\label{cor:tangent-Morita}
A map $\phi:\u G\to \u G'$ is Morita if and only if its total differential $d\phi:\u{TG}\to \u{TG'}$ is VB-Morita.
\end{corollary}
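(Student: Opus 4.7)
The plan is to reduce everything to Theorem \ref{thm:VB-morita} together with Lemma \ref{lemma:2vects}. The ``if'' direction is essentially tautological: if $d\phi$ is VB-Morita then by Theorem \ref{thm:VB-morita} its base map $\phi$ is Morita. So I concentrate on the forward direction.

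Assuming $\phi:\u G \to \u G'$ is Morita, by Theorem \ref{thm:VB-morita} it suffices to verify that for every $x\in M$ the fiber map
$$(d\phi)_x : (\u{TG})_x \to (\u{TG'})_{\phi(x)}$$
is a Morita map of 2-vects. The fiber $(\u{TG})_x$ is the 2-vect $T_{1_x}G \toto T_xM$, and its core complex is the restriction at $x$ of the core complex of $\u{TG}$, namely the Lie algebroid anchor
$$\partial_x : A_x \to T_xM.$$
Its degree-$1$ cohomology is $\ker\partial_x = \g_x$, the Lie algebra of the isotropy group $G(x,x)$, while its degree-$0$ cohomology is $\coker\partial_x = N_xM$, the normal direction to the orbit through $x$. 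By Lemma \ref{lemma:2vects}, the map $(d\phi)_x$ is Morita if and only if it induces a quasi-isomorphism between these two-term complexes.

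To check this, I invoke Proposition \ref{prop:criterion} applied to $\phi$: being Morita, $\phi$ induces at each $x$ an isomorphism $\phi_x$ of Lie groups $G(x,x) \cong G'(\phi(x),\phi(x))$ together with a compatible isomorphism $N_xM \cong N_{\phi(x)}M'$ of normal representations. Differentiating the former at the identity yields an isomorphism $\g_x \cong \g'_{\phi(x)}$, and $d\phi$ restricts on the cohomologies of the core complexes to exactly these two maps. Hence $(d\phi)_x$ is a quasi-isomorphism, and Theorem \ref{thm:VB-morita} gives the conclusion. No genuine difficulty is expected: the only conceptual content is identifying the cohomology of the tangent core complex with precisely the isotropy and normal data that govern the pointwise criterion of Proposition \ref{prop:criterion}.
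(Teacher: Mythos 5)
Your proposal is correct and takes essentially the same route as the paper: reduce via Theorem \ref{thm:VB-morita} to the base and fiber data, identify the kernel and cokernel of the core complex $A_G\to TM$ at $x$ with the isotropy Lie algebra $\g_x$ and the normal direction $N_xM$, and match these against the pointwise criterion of Proposition \ref{prop:criterion}. The paper's proof is just a one-line statement of this identification; your write-up is the same argument spelled out.
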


\begin{proof}
Recall that the core sequence of $\u{TG}$ is $A_G\xto\rho TM$, and over a point $x$, the kernel is the Lie algebra of the isotropy $G(x,x)$ and the cokernel is the normal direction $N_xM$.
\end{proof}

\begin{corollary}\label{coro:Morita-dual}
A map $\Phi:\u\Gamma\to \u\Gamma'$ over the identity is VB-Morita if and only if its dual $\Phi^*:\u\Gamma'^*\to\u\Gamma^*$ is so.
\end{corollary}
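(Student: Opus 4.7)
My plan is to reduce the statement to a fact about core complexes via Theorem \ref{thm:VB-morita}, and then conclude by the exactness of linear duality. Since both $\Phi$ and $\Phi^*$ cover the identity on $\u G$, the base-map hypothesis in Theorem \ref{thm:VB-morita} is automatic, so each map is VB-Morita if and only if its restriction to every fiber 2-vect is Morita. By Lemma \ref{lemma:2vects}, being Morita between 2-vects is equivalent to inducing a (fiberwise) quasi-isomorphism on the underlying two-term core complexes. Hence the corollary will follow once I show that $\Phi$ induces a fiberwise quasi-isomorphism of core complexes if and only if $\Phi^*$ does.

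The next step is to identify the chain map that $\Phi^*$ induces on cores. Writing the core complexes of $\u\Gamma$ and $\u\Gamma'$ as $(C\xto{\partial}E)$ and $(C'\xto{\partial'}E')$, the construction of the dual VB-groupoid recalled in Section 2 produces the core complexes of $\u\Gamma^*$ and $\u\Gamma'^*$ as the fiberwise linear duals $(E^*\xto{\partial^*}C^*)$ and $(E'^*\xto{\partial'^*}C'^*)$. The VB-map $\Phi$ restricts on cores to a chain map $(\Phi_C,\Phi_E)$, and by functoriality of the duality construction, $\Phi^*$ restricts on cores to the transposed chain map $(\Phi_E^*,\Phi_C^*):(E'^*\xto{\partial'^*}C'^*)\to(E^*\xto{\partial^*}C^*)$.

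To finish, I invoke the fact that pointwise linear duality is an exact contravariant functor on finite-rank vector bundles over $M$, so it carries quasi-isomorphisms of two-term complexes to quasi-isomorphisms (and reflects them, being involutive on finite-rank bundles). Therefore $(\Phi_C,\Phi_E)$ is a fiberwise quasi-isomorphism if and only if $(\Phi_E^*,\Phi_C^*)$ is, which yields the corollary. I do not expect any real obstacle here; the only point requiring a moment's care is verifying that $\Phi^*$ genuinely acts on cores as the linear dual of $\Phi|_{\mathrm{core}}$, but this is built into the definition of the dual VB-groupoid and is tautological once one unwinds the identification of the core of $\u\Gamma^*$ with the annihilator of $\ker(\partial)$ inside $\Gamma^*|_M$.
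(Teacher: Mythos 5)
Your proposal is correct and is exactly the argument the paper intends (the corollary is stated there without proof as an immediate consequence): reduce via Theorem \ref{thm:VB-morita} to the fiberwise statement, identify fiberwise Morita with quasi-isomorphism of core complexes via Lemma \ref{lemma:2vects}, and use that the core sequence of $\u\Gamma^*$ is by definition the linear dual of that of $\u\Gamma$, together with exactness and involutivity of finite-rank duality. No gaps.
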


\begin{corollary}
The projection $\pi:\u\Gamma\to \u G$ is Morita if and only if $\u\Gamma$ is acyclic.
\end{corollary}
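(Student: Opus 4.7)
The plan is to reduce this to Theorem \ref{thm:VB-morita}. The key move is to recognize the projection $\pi:\u\Gamma\to\u G$ itself as a VB-map: regard $\u G$ as a VB-groupoid over itself via the identity projection $\u G\to\u G$, with the zero vector bundle structure on $G\to G$ and $M\to M$; call this VB-groupoid $\u 0_{\u G}$. Then $\pi$ is a VB-map $\u\Gamma\to\u 0_{\u G}$ covering $\id_{\u G}:\u G\to\u G$, and being Morita as a Lie groupoid map is the same as being VB-Morita in this sense.

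Now apply Theorem \ref{thm:VB-morita}. The base component $\id_{\u G}$ is trivially Morita, so $\pi$ is VB-Morita iff each fiber map $\pi_x:\u\Gamma_x\to\u 0$ is Morita, for every $x\in M$. By Lemma \ref{lemma:2vects}, a map of 2-vects into $\u 0$ is Morita precisely when it induces a quasi-isomorphism on core complexes, i.e. precisely when the core complex $C_x\xto{\partial}E_x$ of $\u\Gamma_x$ is acyclic. Requiring this for every $x$ is exactly the condition that $\partial:C\to E$ is a fiberwise isomorphism, which is the definition of $\u\Gamma$ being acyclic.

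There is no real obstacle here; the entire argument is bookkeeping on top of the machinery already developed, and the only care needed is in packaging $\pi$ as a VB-map so that the characterization theorem applies. As a consistency check one could argue instead via Corollary \ref{cor:quasi-iso}: the VB-map $\pi:\u\Gamma\to\u 0_{\u G}$ over $\u G$ is VB-Morita iff the induced morphism between their 2-term representations up to homotopy is a quasi-isomorphism, and since the ruth of $\u 0_{\u G}$ is the zero complex, this again collapses to the acyclicity of $\partial:C\to E$. Both routes are immediate consequences of the theorems proved in this section, matching the remark on acyclicity made in Remark \ref{rmk:acyclic}.
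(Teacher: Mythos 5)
Your proposal is correct and is essentially the argument the paper intends: the corollary is listed as an immediate consequence of Theorem \ref{thm:VB-morita}, obtained by viewing $\pi$ as the VB-map $\u\Gamma\to\u 0$ over $\id_{\u G}$ and applying Lemma \ref{lemma:2vects} fiberwise, exactly as you do (note the paper's definition of acyclicity already records the equivalence with $\u\Gamma\to\u 0$ being a quasi-isomorphism). No gaps.
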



\section{Morita invariance of VB-cohomology}


We overview here the notion of VB-cohomology, revisiting some results from \cite{gm} and \cite{cd}, and as a first application of our previous results, we prove here the Morita invariance of VB-cohomology. This generalizes both the Morita invariance of differentiable cohomology (cf. \cite{c}) and of deformation cohomology of Lie groupoids (cf. \cite{cms}), and is a solid step towards achieving Morita invariance of ruth cohomology. 

\medskip


Given $\u G$ a Lie groupoid, its {\bf differential cohomology} $H^\bullet(\u G)$ is obtained by taking the cohomology of the differential graded algebra of functions $C(\u G)$. The differential cohomology of $\u G$ with {\bf coefficients} in a representation $H^\bullet(\u G,E)$, or more generally, with coefficients in a representation up to homotopy $H^\bullet(\u G,\E)$, is computed by using the graded module $C(\u G,E)$ (resp. $C(\u G,\E)$) and the differential given by the representation.


Given a VB-groupoid $\Gamma\toto E$ over $G\toto M$, the following two subcomplexes of $C(\u\Gamma)$ can be considered: the {\bf linear complex} $C_{lin}(\u \Gamma)$ (cf. \cite{cd}) and the {\bf VB-complex} $C_{VB}(\u \Gamma)$ (cf. \cite{gm}).
The first one consists of the fiberwise linear cochains, and the second one consists of the fiberwise linear cochains $\phi$ that are {\bf projectable}, in the sense that they satisfy
\begin{itemize}
\item[i)] $\phi(v_1,\dots,v_{p-1},0_g)=0$, and
\item[ii)] $\phi(v_1,\dots,v_p0_g)=\phi(v_1,\dots,v_p)$.
\end{itemize}
A simple computation shows that $\phi$ is projectable if and only if 
the following hold:
\begin{itemize}
\item[i)] $\phi(v_1,\dots,v_{p-1},0_g)=0$, and
\item[ii')] $\delta(\phi)(v_1,\dots,v_p,0_g)=0$.
\end{itemize}
The {\bf linear cohomology} $H^\bullet_{lin}(\u\Gamma)$ and the {\bf VB-cohomology} $H^\bullet_{VB}(\u\Gamma)$ are defined as the cohomologies of the complexes $C^\bullet_{lin}(\u\Gamma)$ and $C^\bullet_{VB}(\u\Gamma)$. 


It turns out that the linear and VB cohomologies are isomorphic. This is proven in \cite[Lemma 3.1]{cd}. We review that proof here in a rather conceptual version, based on discussions with the authors of that article.

\begin{proposition}
The inclusion yields an isomorphism $H_{VB}(\u \Gamma)\xto\cong H_{lin}(\u \Gamma)$.
\end{proposition}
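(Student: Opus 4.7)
The plan is to prove that the inclusion $\iota: C^\bullet_{VB}(\u\Gamma) \hookrightarrow C^\bullet_{lin}(\u\Gamma)$ is a quasi-isomorphism by exhibiting an explicit deformation retraction: a projection $r: C^\bullet_{lin} \to C^\bullet_{VB}$ with $r\iota = \id$, together with a chain homotopy $h: C^\bullet_{lin} \to C^{\bullet-1}_{lin}$ satisfying $\id - \iota r = \delta h + h\delta$.

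I would begin by fixing a unital cleavage $\Sigma$ of $\u\Gamma$, whose existence was recalled in Section 2. The cleavage provides a vector bundle splitting $\Gamma \cong t^*C \oplus s^*E$ over $G$, so that any arrow $v \in \Gamma_g$ decomposes uniquely as $v = c + \Sigma(g, s(v))$ with $c$ a core element over $t(g)$. This splitting is the geometric input needed to define the retraction: the "horizontal" part $\Sigma(g, s(v))$ carves out a canonical copy of $\u G$ sitting inside $\u\Gamma$, and cochains in $C^\bullet_{VB}$ should be thought of as those that see only the residual core information after subtracting the horizontal contribution.

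Next, I would use $\Sigma$ to simultaneously define $r$ and $h$. Roughly, $r(\phi)$ is obtained from $\phi$ by subtracting its "horizontal piece" computed via $\Sigma$ so as to force condition $(\mathrm i)$, and the homotopy $h$ is constructed to measure the extent to which this correction fails to commute with $\delta$. The verification that $r\iota = \id$, that $r$ lands in $C^\bullet_{VB}$, and that $\id - \iota r = \delta h + h\delta$ holds on all of $C^\bullet_{lin}$, would then be a direct computation using the cocycle identity for $\delta$, the unitality of $\Sigma$, and the linearity of the structure maps of $\u\Gamma$.

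The main obstacle is finding formulas for $r$ and $h$ that are compatible with both projectability conditions at once. Condition $(\mathrm i)$ alone is not preserved by $\delta$, so the naive "subtract the evaluation at a zero arrow" formula for $r$ fails to commute with $\delta$, and the defect must be absorbed into $h$. Conceptually, this defect is governed by the curvature tensor $\gamma$ of the 2-term representation up to homotopy associated to $\u\Gamma$ via the Grothendieck construction (Theorem \ref{thm:Grothendieck}): a flat cleavage would trivialize the construction entirely, but since flat cleavages need not exist, $\gamma$ must be incorporated into $h$ to obtain the chain homotopy identity. Writing the compatible formulas with correct signs is the main combinatorial content of the proof.
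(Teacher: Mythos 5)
Your proposal identifies the right tool (a unital cleavage and a homotopy operator built from it) but stops exactly where the proof begins: no formulas for $r$ or $h$ are given, and you yourself concede that ``writing the compatible formulas with correct signs is the main combinatorial content of the proof.'' That content \emph{is} the proof, so as written there is a genuine gap. Worse, the shape you are aiming for --- a single projection $r$ and a single homotopy $h$ with $\id-\iota r=\delta h+h\delta$ on all of $C_{lin}(\u\Gamma)$ --- is strictly harder than necessary, and the natural cleavage-based operator does not deliver it in one step. The operator that actually works is $h(\phi)(v_1,\dots,v_p)=\phi(v_1,\dots,v_p,\sigma(v_1,\dots,v_p)^{-1})$, where $\sigma(v_1,\dots,v_p)$ is the cleavage arrow with source $s(v_p)$ lying over $\pi(v_1\cdots v_p)$; a direct computation shows that $I=\id+(-1)^p(h\delta-\delta h)$ only improves projectability ``one zero at a time'': it sends cochains that vanish (together with their differential) on strings ending in $i$ zeroes to cochains vanishing on strings ending in $i-1$ zeroes. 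No single application of such an $h$ lands you in $C_{VB}(\u\Gamma)$.

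This is precisely why the paper does not build a global deformation retraction. It instead introduces the increasing filtration $C_{VB}(\u\Gamma)=F_1\subset F_2\subset\cdots\subset C_{lin}(\u\Gamma)$, where $F_i$ consists of the linear cochains $\omega$ such that both $\omega$ and $\delta\omega$ vanish on strings ending in $i$ zeroes, proves that each inclusion $F_{i-1}\subset F_i$ is a homotopy equivalence with homotopy inverse $I$, and then concludes degree by degree using $F_i^j=C^j_{lin}(\u\Gamma)$ for $j<i$, so that $H^k(F_1)=H^k(F_{k+1})=H^k_{lin}(\u\Gamma)$. To salvage your one-step plan you would have to compose a degree-dependent chain of these operators, at which point you have reinvented the filtration; you should adopt it from the start. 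Note also that your appeal to the curvature tensor $\gamma$ is a red herring: the argument never needs to track the failure of flatness explicitly --- unitality of the cleavage and the cancellations in $h\delta-\delta h$ are all that is used.
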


\begin{proof}
Consider the following increasing filtration on $C_{lin}(\u\Gamma)$:
$$C_{VB}(\u\Gamma)=F_1\subset F_2 \subset \dots\subset C_{lin}(\u\Gamma)$$
where $F_i$ consists of the cochains $\omega$ such that both $\omega$
and $\delta\omega$ vanish over sequences $(v_1,\dots, v_p)$ finishing on $i$ zeroes. Note that $F_i^j=C_{lin}^j(\u\Gamma)$ for $j<i$. We will show that the inclusion $F_i\subset F_{i+1}$ is a quasi-isomorphism for all $i$ and therefore $$H^k_{VB}(\u\Gamma)=H^k(F_{1})=H^k(F_{k+1})=H^k_{lin}(\u\Gamma).$$

Let $\Sigma$ be a cleavage in $\u\Gamma$. 
Given $(v_1,\dots,v_p)\in\Gamma^{(p)}$, denote by $\sigma(v_1,\dots,v_p)$ the unique arrow in $\Sigma$ that has source $s(v_p)$ and projection $\pi(v_1\dots v_p)$. We can then define a homotopy operator by
$$h:C^p_{lin}(\u\Gamma)\to C^{p-1}_{lin}(\u\Gamma)\qquad h(\phi)(v_1,\dots,v_p)=\phi(v_1,\dots,v_p,\sigma(v_1,\dots,v_p)^{-1})$$
Note that $h$ preserves the filtration, namely $h(F_i)\subset F_i$.
We claim that the chain map $I=\id+(-1)^p(h\delta-\delta h)$ is a homotopy inverse for the inclusion $F_{i-1}\subset F_i$. We must then show that $I(F_i)\subset F_{i-1}$. 

When computing $I(\phi)$ several cancellations occur, and we get
\begin{align*}
I(\phi)(v_1,&\dots,v_p) =
\phi(v_1,\dots,v_p\sigma(v_1,\dots,v_p)^{-1})+(-1)^p \phi(v_2,\dots,v_p,\sigma(v_1,\dots,v_p)^{-1})\\
&-\phi(v_1,\dots,v_{p-1},\sigma(v_1,\dots,v_{p-1})^{-1})
-(-1)^p\phi(v_2,\dots,v_{p},\sigma(v_2,\dots,v_{p})^{-1})
\end{align*}
If in addition $v_p=0_g$ for some $g$, then all the $\sigma$ arrows appearing in the above equation are $0$, and we can rewrite 
$$I(\phi)(v_1,\dots,v_p) =
(-1)^p \phi(v_2,\dots,v_p,0_{\pi(v_1\dots v_p)}^{-1})
-(-1)^p\phi(v_2,\dots,v_{p},0_{\pi(v_2\dots v_{p})}^{-1})$$
In particular, if $\phi\in F_i$, then $I(\phi)$ vanishes on sequences ending in $i-1$ zeroes, and the same holds for $\delta I(\phi)=I(\delta(\phi))$, from where the result follows.
\end{proof}


The constructions $C_{lin}(\u \Gamma), C_{VB}(\u \Gamma),H_{VB}(\u\Gamma)$ are functorial on $\u\Gamma$, for any map $\Phi:\u\Gamma\to\u\Gamma'$ must preserve linear and projectable cochains.
Our next theorem shows that VB-cohomology is a VB-Morita invariant. Its proof is a rather straightforward combination of previous results.

\begin{theorem}\label{thm:cohomology}
Let $\Phi:\u\Gamma\to \u\Gamma'$ be a VB-Morita map. Then the induced map in VB-cohomology $\Phi^*:H^{\bullet}_{VB}(\u\Gamma')\to H^{\bullet}_{VB}(\u\Gamma)$ is an isomorphism.
\end{theorem}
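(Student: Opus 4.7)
The plan is to reduce the theorem to two simpler sub-cases via a factorization trick, and then handle each using a spectral sequence on a natural double-complex structure of $C^\bullet_{VB}$ coming from a cleavage. First I would factor the given VB-Morita map as
\[
\u\Gamma \xto{\Psi} \phi^*\u\Gamma' \xto{p} \u\Gamma',
\]
where $p$ is the canonical projection associated with the base-change along $\phi:\u G\to\u G'$, and $\Psi$ is the VB-map over $\id_{\u G}$ induced by the universal property of base-change. By Corollary \ref{cor:VBpullback}, $p$ is VB-Morita; since Morita maps satisfy the 2-out-of-3 property, so does $\Psi$. It thus suffices to prove the theorem in the two sub-cases: (A) $\Phi$ covers $\id_{\u G}$ and is a quasi-iso of core complexes; (B) $\Phi=p$ is a base-change projection along a Morita map.

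For sub-case (A), using Theorem \ref{thm:Grothendieck} and a choice of cleavages, I identify $\u\Gamma$ and $\u\Gamma'$ with semi-direct product VB-groupoids attached to 2-term ruths $\E$, $\E'$ over $\u G$; by Corollary \ref{cor:quasi-iso}, $\Psi$ corresponds to a quasi-iso $\E\to\E'$ of 2-term ruths. Under the decomposition $\Gamma\cong t^*C\oplus s^*E$ (dually $\Gamma^*\cong s^*E^*\oplus t^*C^*$), the VB-complex $C^\bullet_{VB}(\u\Gamma)$ acquires a natural bigrading recording the core- versus side-linear component, and the VB-differential splits into a horizontal nerve differential of $\u G$ plus a vertical piece encoding the anchor and the remaining ruth data. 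This exhibits $C^\bullet_{VB}(\u\Gamma)$ as the total complex of a double complex whose columns are differentiable cochain complexes of $\u G$ with values in the vector bundles $C^*,E^*$. On each column $\Psi^*$ restricts to pullback along a fiberwise quasi-iso of vector bundles over $\u G$, and a standard spectral sequence comparison then delivers the iso on total cohomology.

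For sub-case (B), apply the same bigrading to both $\u\Gamma'$ and $\phi^*\u\Gamma'$. On each column, $p^*$ reduces to the pullback along $\phi$ of differentiable cochains of $\u G'$ with values in a representation (on the side bundle $E'^*$ or the core $C'^*$); classical Morita invariance of differentiable cohomology with coefficients (cf.~\cite{c}) then yields a quasi-iso on each column, and a second spectral sequence comparison concludes. The main obstacle I anticipate lies in the careful setup of the double complex: verifying that the cleavage-dependent bigrading nevertheless produces a well-defined convergent spectral sequence, that $\Psi^*$ and $p^*$ respect the resulting filtration, and that classical Morita invariance of cohomology with coefficients lifts intactly to the twisted/ruth-type coefficients appearing here.
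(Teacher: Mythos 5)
Your reduction to the two sub-cases is sound: the factorization $\u\Gamma\to\phi^*\u\Gamma'\to\u\Gamma'$, Corollary \ref{cor:VBpullback}, and a two-out-of-three argument correctly isolate (A) a fiberwise quasi-isomorphism over $\id_{\u G}$ and (B) a base-change projection. The problem is in the spectral-sequence engine you propose for both sub-cases. First, the bigrading by simplicial degree $q$ and core/side degree $r$ does \emph{not} make $C_{VB}(\u\Gamma)$ into a genuine double complex: after choosing a cleavage, the ruth differential $\mathcal D$ has a curvature component $\gamma$ of bidegree $(2,-1)$, so the "horizontal" (quasi-action) piece does not square to zero and the two-column splitting is not preserved by $\mathcal D$ in either direction. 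The only workable filtration is by the simplicial degree $q$, whose associated graded differential is the anchor $\partial$. But then the $E_1$ page is built from $\ker\partial$ and $\coker\partial$, which are not vector bundles when $\partial$ has non-constant rank, so the columns are not "differentiable cochains of $\u G'$ with values in a representation" and Crainic's theorem \cite{c} does not apply to them as stated. This is exactly the obstacle you flag in your last sentence, but it is not a technical loose end: Morita invariance of cohomology with ruth-type coefficients is the content of the theorem, so deferring it there makes the argument circular (or at best incomplete) precisely at the crux.

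For comparison, the paper sidesteps all of this by never decomposing $C_{VB}$ internally. It uses the projector $P_{\u\Gamma}f(\gamma)=\frac{d}{dt}\big|_{t=0}f(t\gamma)$, which commutes with the simplicial differential, together with the isomorphism $H_{VB}\cong H_{lin}$, to exhibit $H^\bullet_{VB}(\u\Gamma)$ as a natural direct summand of the ordinary differentiable cohomology $H^\bullet(\u\Gamma)$ of the \emph{total} Lie groupoid, with trivial coefficients. Since $\Phi$ is by hypothesis a Morita map of Lie groupoids $\u\Gamma\to\u\Gamma'$, Crainic's Morita invariance (with trivial coefficients, where no constant-rank issues arise) gives an isomorphism $H^\bullet(\u\Gamma')\to H^\bullet(\u\Gamma)$ compatible with the splitting, and the restriction to the VB-summands concludes. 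If you want to salvage your route, you would need either to prove directly that a fiberwise quasi-isomorphism of bounded complexes of vector bundles induces a quasi-isomorphism on the $q$-filtration $E_0$ pages (splitting off acyclic subcomplexes), and separately establish Morita invariance for the pseudo-representation columns; the paper's summand trick avoids both.
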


\begin{proof}
As shown in \cite{cd}, the natural projection $P_{\u\Gamma}:C^{\infty}(\u\Gamma)\to C^{\infty}_{lin}(\u\Gamma)$, $P_{\u\Gamma}f(\gamma):=\frac{d}{dt}|_{t=0}f(t\gamma)$, satisfies $P_{\u\Gamma}^2=P_{\u\Gamma}$ and commutes with the groupoid differential.
If we denote by $K_{\u\Gamma}=\ker(P_{\u\Gamma})$, then we have a natural direct sum decomposition of the space of differentiable cochains, and hence another one at the level of cohomology:
$$H^\bullet (\u \Gamma)=H^\bullet_{VB}(\u\Gamma)\oplus H^\bullet(K_{\u\Gamma})$$
Now, given $\Phi:\u\Gamma\to \u\Gamma'$ VB-Morita, since the differential cohomology is Morita invariant (cf. \cite[Thm 1]{c}), we have an isomorphism
$$H^\bullet_{VB}(\u\Gamma')\oplus H^\bullet(K_{\u\Gamma'})\to H^\bullet_{VB}(\u\Gamma)\oplus H^\bullet(K_{\u\Gamma})$$
and since it has to preserve the direct sum decomposition, we can conclude that the induced map on VB-cohomology is an isomorphism as well.
\end{proof}


We will derive two corollaries out of this theorem. The first one asserts the Morita invariance of ruth cohomology. In \cite{gm} VB-cohomology is related to the cohomology with coefficients in a ruth. 

\begin{proposition}[{cf. \cite[Thm 5.6]{gm}}]
Given $\u G\action E\oplus C$ a 2-term ruth, there is a canonical isomorphism
$$C(\u G,E\oplus C)\to C_{VB}(\u\Gamma^\ast)[1]$$
between the differential complex of $\u G$ with coefficients on $E\oplus C$ and the VB-complex of the dual of the Grothendieck construction, shifted by 1.
\end{proposition}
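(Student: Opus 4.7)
The plan is to pick a unital cleavage $\Sigma$ on $\u\Gamma$, use it to produce an explicit isomorphism of graded vector spaces, and then verify that the differentials match via the formulas in Proposition \ref{prop:breaking}. First I would dualize the Grothendieck description $\Gamma\cong t^*C\oplus s^*E$ to obtain $\Gamma^*\cong s^*E^*\oplus t^*C^*$ over $G$, which has unit bundle $C^*$ and core $E^*$. The dual cleavage splits the source map of $\u\Gamma^*$ at every level, so composable $p$-chains in $\u\Gamma^*$ are parametrized by $G^{(p)}$ together with one source vector in $C^*$ and intermediate core vectors attached at the inner vertices. A fiberwise linear cochain of degree $p+1$ on $\u\Gamma^*$ is then a multilinear expression in these coordinates, and the projectability axioms (i)--(ii) from Section 4 kill all but two components, yielding a canonical map
$$\Psi:C^p(\u G,E)\oplus C^{p+1}(\u G,C)\longrightarrow C^{p+1}_{VB}(\u\Gamma^*),$$
bijective by a fiberwise dimension count. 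This reproduces the existence part of \cite[Thm 5.6]{gm}.

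Next I would verify that $\Psi$ intertwines differentials. Expanding the simplicial coboundary $\delta$ on $N(\u\Gamma^*)$ on a projectable cochain and dualizing the semi-direct product formulas for multiplication, inversion and units of $\u\Gamma$ displayed after Theorem \ref{thm:Grothendieck}, each of the four data $(\partial,\rho^E,\rho^C,\gamma)$ of the ruth emerges in a single class of face contributions: $\partial$ from the extremal faces via source and target, $\rho^E$ and $\rho^C$ from the inner compositions, and $\gamma$ from the non-associativity introduced by $\Sigma$. Collecting the contributions and comparing against the explicit formula for $\mathcal D$ recalled in Subsection 2.2 gives $\Psi\mathcal D=\delta\Psi$, which is the content we need.

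The main obstacle is bookkeeping: the Koszul shift $[1]$ forces extra signs in the $C$-components, and under the duality of VB-groupoids the roles of unit bundle and core are swapped, so the curvature $\gamma$ of $\u G\action E\oplus C[1]$ enters the structure maps of $\u\Gamma^*$ rather than appearing as an associativity defect on the cochain side. Independence from the choice of $\Sigma$ is automatic since both sides of the isomorphism are defined intrinsically; alternatively, two unital cleavages differ by a $\mu\in C^1(\u G,E\to C)$ which, by Proposition \ref{prop:ruthmap}, determines a chain isomorphism of the corresponding ruths and hence commutes with $\Psi$.
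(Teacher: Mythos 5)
The paper offers no proof of this proposition: it is recalled from Gracia-Saz--Mehta \cite[Thm 5.6]{gm}, and your sketch reconstructs precisely the argument of that reference (decompose the dual, identify the projectable linear cochains with two graded components, match the simplicial differential against $\mathcal D$), so the route is the intended one. Before it counts as a proof, three bookkeeping points need repair. First, a fiberwise linear cochain is \emph{linear}, not multilinear, on the fibre of $(\Gamma^*)^{(p+1)}\to G^{(p+1)}$; that fibre is the direct sum of one free source vector in $C^*$ and one core vector in $E^*$ \emph{per arrow} (equivalently per non-initial vertex, not just per inner vertex), so a linear cochain splits into $p+2$ summands, of which condition (i) kills all but the $C^*$-coefficient and the core coefficient of the extremal arrow, while condition (ii) is what makes the latter independent of that arrow and hence produces the degree drop $C^{p}(\u G,E)$ versus $C^{p+1}(\u G,C)$ --- your sketch does not account for this shift. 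Second, the decomposed form of the dual is $s^*C^*\oplus t^*E^*$ (source side-vector plus target core-vector), not the naive fibrewise dual $s^*E^*\oplus t^*C^*$; identifying the two uses the sign-sensitive pairing defining $\u\Gamma^*$. Third, canonicity is cheaper than you make it: $\u\Gamma$ is the Grothendieck construction and so comes with a preferred decomposition, so no auxiliary cleavage is ever chosen. The verification $\Psi\mathcal D=\delta\Psi$ is only asserted, but your description of where $\partial,\rho^E,\rho^C,\gamma$ arise is consistent with the computation carried out in \cite{gm}.
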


%
%
%
%
%
%
%

Combining this with our previous theorems we get the following corollary:

\begin{corollary}
Let $\phi:\u G\to \u G'$ be a Morita map and $\mathcal{E}$ a 2-term representation up to homotopy of $\u G'$. The induced map $H^{\bullet}(\u G,\phi^*\mathcal{E})\to H^{\bullet}(\u G',\mathcal{E})$ is an isomorphism.
\end{corollary}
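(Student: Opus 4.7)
The plan is to reduce the statement to Theorem \ref{thm:cohomology} via the isomorphism $C(\u G,\E)\cong C_{VB}(\u\Gamma^\ast)[1]$ of the preceding proposition, where $\u\Gamma$ denotes the Grothendieck construction of the 2-term ruth $\E$. Under this identification, the map on ruth cohomology we want to invert will correspond to the map induced on VB-cohomology by a canonical VB-Morita map, to which Theorem \ref{thm:cohomology} applies directly.

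Concretely, let $\u\Gamma'\to \u G'$ be the Grothendieck construction of $\E$. First I would verify that the Grothendieck construction is natural under base-change, so that the pullback VB-groupoid $\phi^\ast\u\Gamma'\to \u G$ is the Grothendieck construction of the pulled-back ruth $\phi^\ast\E$; this is a direct check using the explicit formulas for the structure maps recalled before Theorem \ref{thm:Grothendieck}. Next I would observe that VB-groupoid duality commutes with base-change, $(\phi^\ast\u\Gamma')^\ast\cong \phi^\ast(\u\Gamma'^\ast)$, which follows from the fact that the dual is characterized by the dualized core sequence and from the compatibility of pullback with cores.

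With these naturality statements in hand, apply Corollary \ref{cor:VBpullback} to the Morita map $\phi:\u G\to\u G'$ and to the VB-groupoid $\u\Gamma'^\ast\to \u G'$: the canonical VB-map $\phi^\ast(\u\Gamma'^\ast)\to \u\Gamma'^\ast$ is VB-Morita. Theorem \ref{thm:cohomology} then yields an isomorphism $H^\bullet_{VB}(\u\Gamma'^\ast)\xto{\cong} H^\bullet_{VB}(\phi^\ast(\u\Gamma'^\ast))$, which transports through the preceding proposition (applied to $\E$ and to $\phi^\ast\E$) to the asserted isomorphism $H^{\bullet}(\u G',\E)\cong H^{\bullet}(\u G,\phi^\ast\E)$, the degree shift being the same on both sides.

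The main obstacle is precisely the two naturality checks — that the Grothendieck construction and dualization commute with pullback of VB-groupoids along a Lie groupoid morphism. Neither is deep, but both demand careful bookkeeping of the structure maps of Proposition \ref{prop:breaking} and of the base-change construction. Once these are in place, the argument is a routine assembly of Corollary \ref{cor:VBpullback}, Theorem \ref{thm:cohomology}, and the proposition relating ruth cohomology to VB-cohomology.
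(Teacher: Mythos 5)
Your proposal is correct and follows essentially the same route as the paper, which derives the corollary by combining the isomorphism $C(\u G,E\oplus C)\cong C_{VB}(\u\Gamma^\ast)[1]$ with Corollary \ref{cor:VBpullback} and Theorem \ref{thm:cohomology}. The two naturality checks you flag (Grothendieck construction and duality commuting with base-change) are exactly the routine verifications the paper leaves implicit.
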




The second corollary presented here is an independent conceptual proof of the Morita invariance of deformation cohomology. Given $\u G$ a Lie groupoid, its {\bf deformation cohomology} $H_{def}(\u G)$ was introduced in \cite{cms}. Its differential may seem arbitrary a priori, but it turns out to be very efficient in describing important properties of the Lie groupoid. We refer the reader to \cite{cms} for the definition of the deformation complex, and using their notations, we show next how we can recover it from our framework. 

\begin{proposition}\label{prop:deformation}
The following map is an isomorphism of complexes:
$$C_{def}(\u G)\to C_{VB}(\u{T^*G}) \qquad c\mapsto \phi_c:(v_p,\dots,v_1)\mapsto\<v_1,c(\pi(v_p),\dots,\pi(v_1))\>$$
\end{proposition}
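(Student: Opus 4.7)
My plan is to check directly that $c\mapsto\phi_c$ is a well-defined, bijective, and differential-preserving map. I will split this into three steps, the last of which will carry the main computational burden.

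First, for well-definedness I will verify that $\phi_c$ lands in $C^{\bullet}_{VB}(\u{T^*G})$. Fiberwise linearity on the vector bundle $(T^*G)^{(p)}\to G^{(p)}$ follows from the fact that the formula is linear in $v_1$ via the pairing, while the remaining arguments $v_2,\dots,v_p$ enter only through their projections $\pi(v_i)=g_i$ to the base. Projectability condition (i) is immediate, since substituting $0_g$ in the position paired against gives $\<0_g,\cdot\>=0$. For the equivalent condition (ii'), I will compute $\delta\phi_c$ on a sequence ending in $0_g$ and use the symbol property of a deformation cochain from \cite{cms} (that $ds\circ c$ factors through $G^{(p-1)}$) to witness the cancellation.

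Next, I will prove bijectivity. Injectivity is non-degeneracy of the pairing: varying $v_1\in T^*_{g_1}G$ and filling in the remaining $v_i$ through a cleavage recovers $c(g_p,\dots,g_1)$ from $\phi_c$. For surjectivity, given $\phi\in C^p_{VB}(\u{T^*G})$, fiberwise linearity combined with projectability (i) forces $\phi$ to depend only on $v_1$, yielding $\phi=\phi_c$ for a unique smooth map $c:G^{(p)}\to TG$. Projectability (ii') will then translate into the symbol condition on $c$, placing $c$ inside $C^p_{def}(\u G)$.

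Finally, for compatibility with differentials, I will directly compute $\delta\phi_c$ using the simplicial coboundary on $C(\u{T^*G})$ and compare it with $\phi_{\delta c}$ where $\delta$ is the deformation differential of \cite{cms}. The face maps of the nerve of $\u{T^*G}$ that involve composition are controlled by the cotangent groupoid multiplication, which is characterized by the duality relation $\<v\cdot w,\,dm(X,Y)\>=\<v,X\>+\<w,Y\>$ for $(X,Y)$ tangent to the graph of the multiplication $m$ of $\u G$. Expanding via this identity converts the pairing $\<v_1,\cdot\>$ of the face contributions into exactly the terms appearing in the deformation differential, including those carrying $dm$ and the symbol of $c$.

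The main obstacle is this last step. The cotangent groupoid multiplication is intricate, and matching it term-by-term with the explicit formula of \cite{cms} requires careful sign management, systematic use of the duality identity, and re-use of the symbol condition to reorganize certain contributions. Each individual identity is a short computation once conventions are fixed, but lining all of them up is where the effort concentrates.
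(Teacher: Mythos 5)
Your proposal is correct and follows essentially the same route as the paper: in both arguments the heart of the matter is the multiplicativity of the canonical pairing between $\u{T^*G}$ and $\u{TG}$ (your duality identity for $dm$), which converts the face-map contributions of $\delta\phi_c$ into exactly the terms of the deformation differential of \cite{cms}. The only difference is organizational: the paper does not verify projectability condition (ii$'$) separately as in your first step, but deduces it for free from the identity $\partial(\phi_c)=\phi_{\partial c}$ together with the observation that every cochain of the form $\phi_{c'}$ satisfies condition (i), which spares you the extra computation with the symbol of $c$.
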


\begin{proof}
The formula clearly defines a map $C_{def}^p(\u G)\to C_{lin}^p(\u{T^*G})$ for each $p$.
Moreover, it is immediate that $\phi_c$ satisfies condition i). By definition, 
$$\phi_{\partial c}(v)=-\<v_1,c(\partial_1 g)c(\partial_0g)^{-1}\> +
\sum_{i\geq 2}\<v_1,c(\partial_i g)\>$$
and
$$\partial(\phi_c)(v)=\<v_2,c(\partial_0g)\>-\<v_2v_1,c(\partial_1g)\>+
\sum_{i\geq 2}\<v_1,c(\partial_i g)\>$$
from where, denoting $w_1=c(\partial_1 g)c(\partial_0g)^{-1}$ and $w_2=c(\partial_0g)$, we have
$$\phi_{\partial c}(v)-\partial(\phi_c)(v)=\<v_2v_1,w_2w_1\>-\<v_2,w_2\>-\<v_1,w_1\>=0$$
since the canonical pairing $\u{T^*G}\times \u{TG}\to \R_\Gamma$ is multiplicative.
This proves not only that the map commutes with the differential, but also that for arbitrary $c$ both $\phi_c$ and $\partial(\phi_c)$ satisfy i), and therefore, the map takes values in the VB-complex.
It is straightforward to check that it is bijective.
\end{proof}


The previous proposition brings some light over the very definition of the deformation complex, and at the same time, provide us with conceptual simple proofs for two of the theorems of \cite{cms}.

\begin{corollary}\cite[Thm 1.1]{cms}
A Morita equivalence yields an isomorphism on deformation cohomologies.
\end{corollary}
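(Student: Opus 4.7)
The plan is to reduce the statement to the Morita invariance of VB-cohomology (Theorem \ref{thm:cohomology}), using the identification $C_{def}(\u G) \cong C_{VB}(\u{T^*G})$ from Proposition \ref{prop:deformation}. So given a Morita map $\phi:\u G\to \u G'$, I will produce a zig-zag of VB-Morita maps connecting $\u{T^*G}$ and $\u{T^*G'}$, then apply Theorem \ref{thm:cohomology} to obtain the desired isomorphism of deformation cohomologies.

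I would build this zig-zag on the tangent side first. By Corollary \ref{cor:tangent-Morita}, the total differential $d\phi:\u{TG}\to\u{TG'}$ is VB-Morita, and it factors canonically through the base-change as $\u{TG}\to\phi^*\u{TG'}\to\u{TG'}$. The right-hand arrow is VB-Morita by Corollary \ref{cor:VBpullback}. Using Theorem \ref{thm:VB-morita}, which characterizes VB-Morita maps as being Morita on the base together with fiberwise, a 2-out-of-3 argument for quasi-isomorphisms of the (fiberwise) core complexes shows that the remaining factor $\u{TG}\to\phi^*\u{TG'}$, which sits over $\id_{\u G}$, is also VB-Morita.

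Next I would dualize over $\u G$ using Corollary \ref{coro:Morita-dual}: the VB-Morita map $\u{TG}\to\phi^*\u{TG'}$ over $\u G$ dualizes to a VB-Morita map
$$\phi^*\u{T^*G'}\;\cong\;(\phi^*\u{TG'})^*\;\longrightarrow\;(\u{TG})^*\;=\;\u{T^*G},$$
where one uses that VB-duality commutes with base-change (the core sequence of $\phi^*\u\Gamma$ is the pullback of the core sequence of $\u\Gamma$, so its dual is the pullback of the dual sequence). Coupling this with the VB-Morita projection $\phi^*\u{T^*G'}\to\u{T^*G'}$ supplied by Corollary \ref{cor:VBpullback} applied to $\u{T^*G'}$, Theorem \ref{thm:cohomology} provides the zig-zag
$$H^\bullet_{VB}(\u{T^*G'})\;\cong\;H^\bullet_{VB}(\phi^*\u{T^*G'})\;\cong\;H^\bullet_{VB}(\u{T^*G}),$$
which via Proposition \ref{prop:deformation} yields the sought isomorphism $H^\bullet_{def}(\u G')\cong H^\bullet_{def}(\u G)$.

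The most delicate point I foresee is the compatibility of VB-duality with base-change: one must verify not only that the underlying core sequences match but that the full groupoid structure of $(\phi^*\u\Gamma)^*$ really agrees with $\phi^*(\u\Gamma^*)$ under Mackenzie's dualization, so that Corollary \ref{coro:Morita-dual} genuinely applies in the form used above. Once that compatibility is in hand, the argument is essentially a bookkeeping of VB-Morita maps and two applications of Theorem \ref{thm:cohomology}.
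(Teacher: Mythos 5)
Your argument is correct and is essentially the paper's intended one: the paper dismisses this corollary as an ``immediate application'' of Corollary \ref{cor:tangent-Morita}, and the unpacking you give --- factor $d\phi$ through the base-change $\phi^*\u{TG'}$, dualize the fixed-base factor via Corollary \ref{coro:Morita-dual} using that duality commutes with base-change, and apply Theorem \ref{thm:cohomology} to the resulting zig-zag --- is precisely the mechanism the paper spells out in the proof of Proposition \ref{prop:marsdenweinstein}. The compatibility of duality with base-change that you flag is indeed the only point needing care, and it is asserted (and used) by the authors in that same proof.
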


This is an immediate application of Corollary \ref{cor:tangent-Morita}.

\begin{corollary}\cite[Thm 1.4]{cms}
A cleavage on $\u{TG}$ yields an isomorphism between the deformation cohomology and the cohomology $H(\u G,A_G\oplus TM)$ with coefficients in the adjoint representation up to homotopy.
\end{corollary}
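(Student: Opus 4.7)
The plan is to chain together three identifications already established in the paper, so the proof is really an assembly argument rather than a new construction. A cleavage $\Sigma$ on $\u{TG}$ first determines, via the recipe spelled out right after Theorem~\ref{thm:Grothendieck}, quasi-representations $\rho^{TM}$ and $\rho^{A_G}$ together with a curvature tensor $\gamma$ on the core complex $A_G\xto{\rho} TM$, and hence a 2-term representation up to homotopy $\mathrm{ad}_\Sigma$ of $\u G$ on $TM\oplus A_G[1]$. This is what is called the adjoint representation up to homotopy; different choices of cleavage produce isomorphic ruths, so the resulting cohomology is independent of $\Sigma$.

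Second, by Theorem~\ref{thm:Grothendieck} the Grothendieck construction of $\mathrm{ad}_\Sigma$ is canonically isomorphic to $\u{TG}$, and therefore its dual is canonically isomorphic to $\u{T^*G}$. Plugging $\u\Gamma=\u{TG}$, $\u\Gamma^*=\u{T^*G}$ into the proposition relating ruth cohomology to VB cohomology of the dual (the statement $C(\u G,E\oplus C)\cong C_{VB}(\u\Gamma^*)[1]$ cited from \cite{gm}) yields a canonical isomorphism of complexes
$$C(\u G,\mathrm{ad}_\Sigma)\;\xto{\cong}\;C_{VB}(\u{T^*G})[1].$$

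Third, I would combine this with Proposition~\ref{prop:deformation}, which identifies $C_{def}(\u G)$ with $C_{VB}(\u{T^*G})$ (the inverse of $c\mapsto \phi_c$), to obtain the desired isomorphism
$$H_{def}(\u G)\;\cong\;H_{VB}(\u{T^*G})\;\cong\;H(\u G, A_G\oplus TM).$$
No serious obstacle arises; the only point requiring care is the bookkeeping of degree shifts, namely checking that the ``$[1]$'' appearing in the ruth/VB correspondence is absorbed by the degree convention used in the deformation complex of \cite{cms} (as is implicit already in the explicit formula of Proposition~\ref{prop:deformation}, which sends a $p$-cochain on $\u G$ valued in tangent data to a linear $p$-cochain on $\u{T^*G}$). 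Once this bookkeeping is in place, the corollary follows directly from the machinery of Sections~2 and~4.
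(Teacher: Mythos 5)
Your proposal is correct and follows the same route the paper intends: the corollary is obtained by chaining the cleavage-to-ruth construction (giving the adjoint representation on $TM\oplus A_G[1]$), the isomorphism $C(\u G,E\oplus C)\cong C_{VB}(\u\Gamma^*)[1]$ from \cite[Thm 5.6]{gm} applied to $\u\Gamma=\u{TG}$, and Proposition \ref{prop:deformation} identifying $C_{def}(\u G)$ with $C_{VB}(\u{T^*G})$. Your remark about the degree shift is the right point of care, and it matches the shift already present in the statement of \cite[Thm 1.4]{cms}.
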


It is remarkable that even though we need a cleavage to relate the cohomology with coefficients in the adjoint representation with either deformation cohomology or VB-cohomology, these two theories compare canonically by just unraveling the deformation differential (cf. \ref{prop:deformation}).


\section{Applications to symplectic geometry}


In this section we deal with two applications of our results to symplectic geometry. 
The first one, due to conversations with Rui Fernandes, deals with an instance of Marsden-Weinstein reduction, and it will be continued in the forthcoming paper \cite{ow}.
The second one is about pre-symplectic groupoids, which are the global objects integrating Dirac structures. We provide a characterization of them in terms of VB-Morita maps, and derive a simple conceptual proof of the global-to-infinitesimal correspondence studied in \cite{bcwz}.

\subsection{An instance of Marsden-Weinstein reduction}
\label{sec:mw}


Let $G\action M$ be a Lie group acting on a manifold. The corresponding infinitesimal action is denoted by $\g\to \mathfrak{X}(M); u\mapsto u_M$. It is well known that there is a canonical lift to a symplectic action on the cotangent bundle $(T^*M,\omega_{can})$, namely
$$G\action T^*M\qquad g\cdot(x,\xi)=(gx,\xi\circ d_{gx}(g^{-1})).$$
This action is always {\em Hamiltonian} with moment map $\mu:T^*M\to \g^*$ given by $\mu(\xi)(u) = \xi(u_M(x))$, for every $\xi \in T^*_xM$ and $u\in\g$. Within the language of VB-groupoids, we have the following conceptualization.

\begin{proposition}
If $G\times M\toto M$ is the action groupoid, then the anchor map of the cotangent VB-groupoid $\u{T^*(G\times M)}$ is given by the moment map $\mu:T^*M\to \g^*$ of the Hamiltonian lifted action $G\action (T^*M,\omega_{can})$.
\end{proposition}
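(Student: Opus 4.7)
The plan is to unfold both sides of the claimed equality using the standard structure of the cotangent VB-groupoid.

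First, I would identify the Lie algebroid $A$ of the action groupoid $G\times M \toto M$. As a vector bundle $A = \g \times M$, and the anchor $\rho: A \to TM$ is the infinitesimal action $\rho(u,x) = u_M(x)$. In particular the base of the dual VB-groupoid is $A^* = \g^* \times M$, which matches the base of $\u{T^*(G\times M)}$.

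Next, I would recall the explicit description of the cotangent VB-groupoid $\u{T^*\mathcal{G}}$ of an arbitrary Lie groupoid $\mathcal{G} \toto N$, as in \cite[\S 11.3]{m}: its core is canonically $T^*N$, and the core anchor $\partial: T^*N \to A_{\mathcal{G}}^*$ agrees (with appropriate sign convention) with the dual $\rho_{\mathcal{G}}^*$ of the Lie algebroid anchor. To keep the proof essentially self-contained, I would verify this by a short computation at a unit $u(x) \in \mathcal{G}$: using the canonical splitting $T_{u(x)}\mathcal{G} \cong T_xN \oplus (A_{\mathcal{G}})_x$, a covector of the form $(\xi,0) \in T_x^*N \oplus (A_{\mathcal{G}})_x^*$ lies in the kernel of the source at units (so represents an element of the core $C \cong T^*N$), and pairing it against a vector $(v,a)$ tangent to the target fiber, which satisfies $v = \rho_{\mathcal{G}}(a)$, yields $\xi(\rho_{\mathcal{G}}(a))$. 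Thus $\partial(\xi) = \rho_{\mathcal{G}}^* \xi$.

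Specializing to the action groupoid, the core anchor of $\u{T^*(G\times M)}$ therefore sends $\xi_x \in T_x^*M$ to the element of $\g^* \times M$ based at $x$ whose $\g$-component is $u \mapsto \xi_x(\rho(u,x)) = \xi_x(u_M(x))$, which is precisely the formula for the moment map $\mu$ of the cotangent lifted action given in the statement.

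The only non-routine step is the identification of the core anchor of a cotangent VB-groupoid with the dual algebroid anchor; this is classical and either cited from \cite[\S 11.3]{m} or, as sketched above, dispatched by a one-line linear algebra verification at the units. No further obstacle is anticipated.
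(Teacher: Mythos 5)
Your proof is correct and follows essentially the same route as the paper: both arguments reduce the claim to the single fact that the core complex of $\u{T^*(G\times M)}$ is the dual $\rho^*:T^*M\to\g^*_M$ of the algebroid anchor $\rho(u,x)=u_M(x)$, which is fiberwise the moment map. The only (cosmetic) difference is that the paper obtains this identification from the general duality of VB-groupoids applied to $\u{T(G\times M)}$, whereas you verify it directly from the cotangent structure maps at the units; your sign conventions in that verification are slightly loose, but this does not affect the conclusion.
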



\begin{proof}
The core bundle of the tangent VB-groupoid $\u{T(G\times M)}$ is the vector bundle $\g_M=\g\times M\to M$. Its core complex is $\rho:\g_M\to TM$ where $\rho(u,x)=u_M(x)$ is the induced infinitesimal action of $\g$ on $M$. The core complex of the cotangent VB-groupoid $\u{T^*(G\times M)}$ is just the dual complex $\rho^*:T^*M\to \g^*_M$, which coincides fiberwise with the moment map $\mu:T^*M\to \g^*$.  
\end{proof}


Now, when the original action $G\action M$ is free and proper, the quotient $M/G$ inherits the structure of a smooth manifold, and we can compare its cotangent bundle with the above cotangent VB-groupoid.

\begin{proposition}\label{prop:marsdenweinstein}
The cotangent of the action groupoid and the cotangent of the quotient manifold are VB-Morita equivalent.
The Marsden-Weinstein reduction $T^*M//G$ of the Hamiltonian action $G\action (T^*M,\omega_{can})$ identifies with the cotangent bundle $T^*(M/G)$ of the orbit manifold.
\end{proposition}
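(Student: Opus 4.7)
The plan is to realize both VB-groupoids as VB-Morita equivalent to a common intermediate by base-changing along the quotient Morita map. Freeness and properness of the action ensure that $M/G$ is a smooth manifold and that the quotient $q\colon M\to M/G$ induces a Morita map $\tilde q\colon \u{G\times M}\to \u{M/G}$ from the action groupoid to the unit groupoid of the orbit manifold. Applying Corollary \ref{cor:VBpullback} to $\tilde q$ and the cotangent VB-groupoid $\u{T^*(M/G)}\to \u{M/G}$, one obtains a canonical VB-Morita map
\[
\tilde q^{\,*}\u{T^*(M/G)}\longrightarrow \u{T^*(M/G)}.
\]
The pullback $\tilde q^{\,*}\u{T^*(M/G)}$ is a VB-groupoid over $\u{G\times M}$ whose core complex is $q^*T^*(M/G)\xto{0}0$.

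The second step is to construct a VB-map $\Phi\colon \tilde q^{\,*}\u{T^*(M/G)}\to \u{T^*(G\times M)}$ covering the identity of $\u{G\times M}$, defined on core bundles by the canonical inclusion $dq^*\colon q^*T^*(M/G)\hookrightarrow T^*M$ (as the annihilator of vertical tangent vectors) and on the arrow bundles by pullback of covectors along the groupoid map $q\colon G\times M\to M/G$. By Theorem \ref{thm:VB-morita} together with Lemma \ref{lemma:2vects}, $\Phi$ is VB-Morita if and only if it induces a fiberwise quasi-isomorphism of core complexes. At each $x\in M$ this comparison reads
\[
\bigl(T^*_{[x]}(M/G)\xto{0}0\bigr)\longrightarrow \bigl(T^*_xM\xto{\mu_x}\g^*\bigr),
\]
which is a quasi-isomorphism since freeness of the action yields that the image of $dq^*_x$ coincides with $\ker\mu_x$ and that $\mu_x$ is surjective. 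Composing with the base-change map above then yields the desired zig-zag of VB-Morita maps between $\u{T^*(G\times M)}$ and $\u{T^*(M/G)}$, establishing the first assertion.

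For the Marsden-Weinstein statement, the cohomology computation of the core complex of $\u{T^*(G\times M)}$ performed above is precisely the classical identification $\mu^{-1}(0)=q^*T^*(M/G)$ and $\mu^{-1}(0)/G=T^*(M/G)$; thus the symplectic reduction is encoded in the VB-Morita equivalence as the underlying manifold of its target. The main technical obstacle is defining $\Phi$ as an honest VB-map and checking compatibility with the (somewhat intricate) cotangent multiplication on $\u{T^*(G\times M)}$ described in \cite[\S 11.3]{m}; the naturality of the cotangent functor applied to the Lie groupoid map $q$ makes the construction essentially canonical, but the explicit multiplicativity verification is the least automatic step of the argument.
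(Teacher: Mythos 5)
Your overall strategy is the right one and matches the paper's: both proofs produce the zig-zag $\u{T^*(G\times M)}\leftarrow \tilde q^{\,*}\u{T^*(M/G)}\to \u{T^*(M/G)}$, with the right-hand leg handled by Corollary \ref{cor:VBpullback} and the left-hand leg checked to be VB-Morita via the core complexes $(T^*_{[x]}(M/G)\to 0)\to(T^*_xM\xto{\mu_x}\g^*)$, exactly as you compute. Your identification of the cohomology of the core complex with $\mu^{-1}(0)$ and $\mu^{-1}(0)/G$ is also the same mechanism the paper uses for the Marsden--Weinstein statement.

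The genuine gap is the one you flag yourself, and it is more serious than your phrasing suggests: there is no ``cotangent functor'' on Lie groupoid maps. Covectors do not push forward or pull back along a general groupoid morphism in a way compatible with the (subtle) groupoid structure on $T^*G$, so appealing to ``naturality of the cotangent functor applied to $q$'' does not produce the VB-map $\Phi\colon \tilde q^{\,*}\u{T^*(M/G)}\to\u{T^*(G\times M)}$, and a direct verification of multiplicativity against the cotangent composition of \cite[\S 11.3]{m} is exactly the computation you have not done. The paper avoids this entirely by working on the tangent side first: the differential of the quotient map is VB-Morita by Corollary \ref{cor:tangent-Morita}, it factors canonically as a map over the identity followed by a base-change, and then one dualizes the whole diagram, using that base-change commutes with duality and that duality preserves VB-Morita maps over the identity (Corollary \ref{coro:Morita-dual}). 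This manufactures your $\Phi$ as the dual of an already-verified VB-map, with no cotangent multiplicativity check needed. If you want to complete your argument as written, you should either perform that check explicitly or reroute through duality as the paper does. A secondary point: the paper also records that these maps are duals of LA-groupoid maps and hence Poisson for the canonical symplectic structures; without some such remark your argument identifies $T^*M/\!/G$ with $T^*(M/G)$ only as a manifold, not as a symplectic quotient.
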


\begin{proof}
Let $\pi:(G\times M\toto M)\to (M/G\toto M/G)$ be the quotient map. Since the action is free and proper this is a Morita map. Its tangent map is therefore VB-Morita (cf. \ref{cor:tangent-Morita}), and we can factor it as a fixed-based map followed by a pullback:
$$\xymatrix{
(T(G\times M)\toto TM) \ar[dr] \ar[d] & \\
(\pi_1^*T(M/G)\toto \pi_0^*T(M/G)) \ar[r] & (T(M/G)\toto T(M/G))
}$$
These three maps are not only VB-Morita, but also maps of {\em LA-groupoids}, see e.g. \cite{bcdh}.
Since the pullback of VB-groupoids preserves duality, and duality preserves VB-Morita maps (cf. \ref{coro:Morita-dual}), after dualizing, we obtain the following diagram:
$$\xymatrix{
(T^*(G\times M)\toto \g^*_M) & \\
(\pi_1^*T^*(M/G)\toto 0_{M}) \ar[r] \ar[u] & (T^*(M/G)\toto 0_{M/G})
}$$
The dual object of an LA-groupoid is a PVB-groupoid, a VB-groupoid endowed with a compatible Poisson structure. Since the dual of an algebroid map is a Poisson relation, we have that this two maps are Poisson for the canonical symplectic structures.
When we look at the anchor sequences of previous diagram, we get the following:
$$\xymatrix{
(T^*M\xto j \g^*_M) & \\
(\pi_1^*T^*(M/G)\to 0_{M})\ar[r] \ar[u] & (T^*(M/G)\to 0_{M/G})
}$$
Since the vertical map is a quasi-isomorphism, we can identify $\pi_1^*T^*(M/G)$ with the kernel of the moment map $j$. Since the horizontal map is also a quasi-isomorphism, we can identify $T^*(M/G)$ with the quotient by the action by $G$. This way we conclude.
\end{proof}

 
When the action is not free, we should still think of the groupoid $(G\times M\toto M)$ as a presentation for the orbit space, just that it is not going to be a manifold in general, but a stack. Its tangent bundle is a sort of Lie algebroid over a stack, and an analog of the previous result should say that this Lie algebroid is Morita invariant. This implies defining brackets on vector fields over stacks, which is far from straightforward. For instance, the space of sections is a Lie 2-algebra rather than a mere algebra. All this is explored in detail in the forthcoming paper \cite{ow}.


\subsection{Integration of Dirac structures}


A \textbf{Dirac structure} on $M$ \cite{courant} is a subbundle $L\subseteq TM\oplus T^*M$ which is Lagrangian with respect to the canonical pairing $\langle,\rangle$ on $TM\oplus T^*M:$ 
$$\langle(X,\alpha),(Y,\beta)\rangle=\alpha(Y)+\beta(X),$$
and involutive 
with respect to the Courant bracket $\Cour{,}$ on $\Gamma(TM\oplus T^*M)$:
$$\Cour{(X,\alpha)(Y,\beta)}=([X,Y],\mathcal{L}_X\beta -i_Y\mathrm{d}\alpha).$$ 
The Courant bracket restricted to the sections of $L$, together with the canonical projection $L\to TM$, make $L$ into a Lie algebroid over $M$. 


Pre-symplectic groupoids were introduced in \cite{bcwz} as the global counterpart of Dirac structures. A {\bf pre-symplectic groupoid} is a Lie groupoid $G\toto M$ with $\mathrm{dim}(G)=2\mathrm{dim}(M)$ equipped with a closed multiplicative 2-form $\omega\in\Omega^2(G)$ which satisfies $\ker ds(x)\cap \ker dt(x)\cap \ker \omega_x=\{0\}$ for any $x\in M$.
We will next provide a nice conceptualization of this definition by using VB-Morita maps, that clarifies the relation between Dirac structures and pre-symplectic groupoids.


Before doing this, recall that a \textbf{symplectic groupoid} is a Lie groupoid $G\toto M$ equipped with a multiplicative symplectic form $\omega\in\Omega^2(G)$. They are the global counterpart of Poisson structures. The multiplicativity of $\omega$ is equivalent to $\omega^{\#}:TG\to T^*G, X\mapsto \omega(X,\cdot)$ being a (VB-)groupoid morphism.  In this case, we can rephrase the non-degeneracy condition by requiring $\omega^{\#}:TG\to T^*G$ to be an isomorphism. In particular, symplectic groupoids have isomorphic adjoint and coadjoint representations up to homotopy. We will see that the conditions defining pre-symplectic groupoids can also be interpreted as a non-degeneracy condition on
$\omega^{\#}:TG\to T^*G$.


Let $G\toto M$ be a Lie groupoid and $\Phi:\u{T G}\to \u{T^*G}$ a VB-map over ${\u G}$. For any $x\in M$, let $\Phi_x$ denote the induced map between the fibers and 
$$(\phi_0,\phi_1):(A_x\xto{\rho_x} T_xM)\to (T^*_xM\xto{\rho_x^*} A_x^*)$$
the corresponding chain map between the tangent and cotangent complexes.

\begin{lemma}\label{lemma:dirac}
The map $\Phi$ is VB-Morita if and only if the following conditions hold:
\begin{enumerate}
\item $\dim(G)=2\dim(M)$;
\item $\ker ds(x)\cap \ker dt(x)\cap \ker(\phi_1)=\{0\}$;
\item $\ker ds(x)\cap \ker dt(x)\cap \ker(\phi_0^*)=\{0\},$
\end{enumerate}
for every $x\in M$.
\end{lemma}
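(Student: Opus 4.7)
The natural strategy is to combine Theorem~\ref{thm:VB-morita} with Lemma~\ref{lemma:2vects} and then analyze the resulting quasi-isomorphism condition on core complexes fiberwise. Since $\Phi$ covers the identity $\id:\u G\to\u G$, which is trivially Morita, Theorem~\ref{thm:VB-morita} reduces the claim to: $\Phi$ is VB-Morita iff for every $x\in M$ the fiber map $\Phi_x:\u{TG}_x\to\u{T^*G}_x$ is a Morita map of 2-vects. By Lemma~\ref{lemma:2vects}, this is in turn equivalent to the chain map $(\phi_0,\phi_1)$ being a quasi-isomorphism.

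The heart of the proof is then linear algebra. First I would note the identification $\ker ds(x)\cap\ker dt(x)=\ker\rho_x\subset A_x$, valid at any unit $u(x)$, since $v\in\ker ds_{u(x)}$ forces $v\in A_x$ and then $dt(v)=\rho_x(v)$. Next I would encode the quasi-isomorphism of $(\phi_0,\phi_1)$ via acyclicity of its mapping cone, which is the 3-term complex
\begin{equation*}
A_x \;\xrightarrow{\;(-\rho_x,\,\phi_1)\;}\; T_xM\oplus T^*_xM \;\xrightarrow{\;(\phi_0,\,\rho_x^*)\;}\; A_x^*.
\end{equation*}
Acyclicity amounts to three conditions: injectivity at the left, exactness at the middle, and surjectivity at the right.

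The injectivity at the left reads $\ker\rho_x\cap\ker\phi_1=0$, which is precisely condition~(2). The surjectivity at the right is equivalent, by dualizing, to the injectivity of $A_x\xrightarrow{(\rho_x,\phi_0^*)}T^*_xM\oplus T_xM$, i.e. $\ker\rho_x\cap\ker\phi_0^*=0$, which is condition~(3). For the middle exactness, I would compute the Euler characteristic of the cone:
\begin{equation*}
\dim A_x-\dim(T_xM\oplus T^*_xM)+\dim A_x^*=2(\dim G-\dim M)-2\dim M=2(\dim G-2\dim M),
\end{equation*}
which vanishes exactly when $\dim G=2\dim M$, i.e. condition~(1). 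Over vector spaces, once the Euler characteristic is zero and the homology vanishes at the two ends, it must vanish in the middle as well; conversely a quasi-isomorphism automatically forces Euler characteristics to match on both sides.

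Putting the three pieces together gives the equivalence. The only mildly delicate step I anticipate is keeping the conventions straight: $\phi_0$ is the base-to-base component $T_xM\to A_x^*$ and $\phi_1$ the core-to-core component $A_x\to T^*_xM$, so that both $\ker\phi_1$ and $\ker\phi_0^*$ live naturally inside $A_x$ where the intersection with $\ker\rho_x=\ker ds\cap\ker dt$ is taken. The duality step used to transfer surjectivity at the bottom of the cone into condition~(3) is the only non-trivial manipulation; everything else is an unpacking of the definitions and a dimension count.
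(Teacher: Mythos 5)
Your proof is correct and follows essentially the same route as the paper: reduce via Theorem~\ref{thm:VB-morita} and Lemma~\ref{lemma:2vects} to the fiberwise quasi-isomorphism of core complexes, read off conditions (2) and (3) as injectivity statements (the latter by dualizing), and obtain the middle/remaining exactness from the Euler characteristic count, which is condition (1). Packaging the argument through the acyclicity of the mapping cone rather than comparing the cohomologies of the two complexes directly is only a cosmetic difference.
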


\begin{proof}
Assume first that $\Phi$ is VB-Morita. This is the same as saying that the induced map of complexes 
$$(A_x\xto{\rho_x} T_xM)\to (T^*_xM\xto{\rho_x^*} A_x^*)$$
is a quasi-isomorphism for every $x\in M$. In particular, both complexes must have the same Euler characteristic, from where
$$\dim(A_x)-\dim(T_xM)=\dim(T_x^*M)-\dim(A^*_x)$$
and we have $\dim M= {\rm rk} A = \dim G -\dim M$, giving 1. Condition 2 follows from the injectivity of 
the induced map on cohomology, and condition 3 from the injectivity of the dual of the induced map in cohomology.

For the converse, note that 2. and 3. imply that $(\phi_0,\phi_1)$ is a monomorphism on degree 1 cohomology and an epimorphism on degree 0 cohomology, and thus the Euler characteristic of $A_x\xto{\rho_x} T_xM$ is greater or equal than that of $T^*_xM\xto{\rho_x^*} A_x^*$, and the equality only holds if both maps on cohomology are isomorphisms. But then 1. tells us that both Euler characteristics must agree, hence we conclude.
\end{proof}


Given $\Phi$ a VB-map as above, its dual $\Phi^*$ is again a VB-map from the tangent to the cotangent. The map $\Phi$ is called {\bf symmetric} if $\Phi=\Phi^*$, and {\bf skew-symmetric} if $\Phi=-\Phi^*$.
If $\Phi$ is either symmetric or skew-symmetric, then we can identify $\phi_1=\pm \phi_0^*$, and conditions 2 and 3 in the above proposition agree.


The previous proposition combined with Theorem \ref{thm:VB-morita} provides a characterization of Lie groupoids having quasi-isomorphic adjoint and coadjoint representations up to homotopy.
We can now conclude our characterization of pre-symplectic groupoids.

\begin{proposition}\label{prop:presymplectic}
Let $\omega\in\Omega^2(G)$ be a multiplicative closed 2-form. Then $(G,\omega)$ is a pre-symplectic groupoid if and only if the map $\omega^{\#}:TG\to T^*G$ is a VB-Morita map. 
\end{proposition}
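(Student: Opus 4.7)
The plan is to apply Lemma \ref{lemma:dirac} to $\Phi = \omega^\#:\u{TG}\to\u{T^*G}$ and match its three conditions with the defining conditions of a pre-symplectic groupoid. First, I would recall the classical fact that $\omega^\#$ is a VB-map over $\id_{\u G}$ if and only if $\omega$ is multiplicative; this is precisely the observation that motivates the definition of pre-symplectic groupoid, and it ensures the hypotheses of Lemma \ref{lemma:dirac} are in place.

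Second, because $\omega$ is a 2-form, $\omega^\#$ is skew-symmetric as a VB-map $\u{TG}\to\u{T^*G} = (\u{TG})^*$. By the remark following Lemma \ref{lemma:dirac}, this forces $\phi_1 = -\phi_0^*$ at every $x\in M$, so conditions 2 and 3 of the lemma reduce to a single condition. Hence $\omega^\#$ is VB-Morita if and only if $\dim G = 2\dim M$ together with $\ker ds(x)\cap \ker dt(x)\cap \ker \phi_1 = 0$ for every $x\in M$, where $\phi_1: A_x\to T^*_xM$ is the induced map on cores.

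Third, I would identify $\ker \phi_1$ with $A_x\cap \ker \omega_{u(x)}$. Since $\omega^\#$ is a VB-map, it sends the core $A_x\subset T_{u(x)}G$ of $\u{TG}$ into the core $T^*_xM\subset T^*_{u(x)}G$ of $\u{T^*G}$, and $\phi_1$ is exactly this restriction. For $v\in A_x$, vanishing of $\phi_1(v)$ in $T^*_xM$ is the same as vanishing of $\omega^\#(v)=i_v\omega_{u(x)}$ in the ambient $T^*_{u(x)}G$, giving $\ker\phi_1 = A_x\cap \ker \omega_{u(x)}$. Intersecting with $\ker dt(x)$, the lemma's surviving condition becomes
\[
\ker ds(x)\cap \ker dt(x)\cap \ker \omega_x = \{0\},
\]
which is precisely the pre-symplectic non-degeneracy condition. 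Combined with $\dim G = 2\dim M$ from condition 1 of the lemma, this matches the definition of a pre-symplectic groupoid term by term.

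The main obstacle is the third step: namely, verifying cleanly that the induced chain map on the cores $A\to T^*M$ really is just the restriction of $\omega^\#$ followed by the natural inclusion of $T^*M$ as the core of $\u{T^*G}$. This requires unpacking the duality between $\u{TG}$ and $\u{T^*G}$ and confirming that $\omega^\#$ commutes with the source maps at the level of core inclusions. Once this identification is established, the dimension count plus the kernel equivalence complete the proof.
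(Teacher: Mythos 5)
Your proof is correct and follows essentially the same route as the paper: the paper's own argument is simply to identify $\omega$ with the skew-symmetric VB-map $\omega^{\#}$ and invoke Lemma \ref{lemma:dirac}, using the observation that skew-symmetry makes conditions 2 and 3 of that lemma coincide. Your additional verification that $\phi_1$ is the restriction of $\omega^{\#}$ to the core $A_x\subset T_{u(x)}G$ (so that $\ker\phi_1=A_x\cap\ker\omega_{u(x)}$) is exactly the routine identification the paper leaves implicit.
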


In particular, pre-symplectic groupoids have quasi-isomorphic adjoint and coadjoint representations up to homotopy.

\begin{proof}
We can identify $\omega$ with the skew-symmetric VB-map $\omega^{\#}:TG\to T^*G$. 
The result now follows as a corollary of the previous one.
\end{proof}


The previous proposition allows us to have a neat description of pre-symplectic groupoids, and also clarifies considerably their relation with Dirac structures. 
In order to do this, let us first revisit the very notion of Dirac structures.
Given a Lie algebroid $A$ over $M$ and given $\sigma:A\to T^*M$, the induced map $(\rho,\sigma):A\to TM\oplus T^*M$ identifies with a Dirac structure if and only if $\rk A=\dim M$, $(\rho,\sigma)$ is injective, and for every $a,b\in \Gamma(A)$ the 
following hold:
\begin{itemize}
\item[i)] $\sigma(a)(\rho(b))+\sigma(b)(\rho(a))=0$,
\item[ii)] $\sigma[a,b]=\mathcal{L}_{\rho(a)}b-\mathcal{L}_{\rho(b)}a-d\sigma(a)(\rho(b))$.
\end{itemize}

A bundle map $\sigma:A\to T^*M$ satisfying i) and ii) is called an \textbf{IM-2-form} in \cite{bcwz}. It is shown in \cite[Thm 4.7]{BCO} that an IM-2-form corresponds, via the construction $\Lambda=-\sigma^*\omega_{can}$, to a 2-form $\Lambda\in\Omega^2(A)$ such that $\Lambda^\#:TA\to T^*A$ is a VB-algebroid map. By abuse of terminology, we also refer to such $\Lambda^\#$ as an IM-2-form on $A$. The conditions $\rk A=\dim M$ and $(\rho,\sigma)$ injective are easily seen to be equivalent to $\Lambda^\#$ being a quasi-isomorphism between the core complexes (same computation as in \ref{lemma:dirac}). Thus we can rephrase the notion of Dirac structure as follows:

\begin{proposition}
A Dirac structure over $M$ is the same as a Lie algebroid $A$ together with a closed IM-2-form $\Lambda\in\Omega^2(A)$ that induces a quasi-isomorphism between the tangent and cotangent complexes. 
\end{proposition}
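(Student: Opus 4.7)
The plan is to chain together three equivalences: the reformulation of Dirac structures just given, the Bursztyn--Cabrera--Ortiz correspondence between IM-2-forms on $A$ and linear closed 2-forms on its total space, and a core-complex computation parallel to Lemma \ref{lemma:dirac}.

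First, I apply the reformulation stated immediately above: a Dirac structure on $M$ is the same as a Lie algebroid $A$ together with a bundle map $\sigma: A \to T^*M$ satisfying the two IM-2-form axioms i) and ii), subject to $\rk A = \dim M$ and $(\rho,\sigma): A \to TM \oplus T^*M$ being injective. Next, I invoke [BCO, Thm 4.7] as a black box: under $\Lambda := -\sigma^*\omega_{can}$, IM-2-forms $\sigma$ correspond bijectively to closed 2-forms $\Lambda \in \Omega^2(A)$ whose musical map $\Lambda^\#: TA \to T^*A$ is a VB-algebroid morphism. This reduces the statement to translating the two non-degeneracy conditions on $\sigma$ into the single condition that $\Lambda^\#$ be a quasi-isomorphism on core complexes.

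For the third and last step, I mimic Lemma \ref{lemma:dirac} in the algebroid setting. The core complex of the tangent VB-algebroid $\u{TA}$ is the anchor $\rho: A \to TM$, and that of the cotangent VB-algebroid $\u{T^*A}$ is its dual $\rho^*: T^*M \to A^*$. Chasing definitions, the chain map on cores induced by $\Lambda^\#$ is, up to signs, the pair $(\sigma, -\sigma^*)$. The rank equality $\rk A = \dim M$ matches the Euler characteristics $\rk A - \dim M = \dim M - \rk A$, while injectivity of $(\rho,\sigma)$ says exactly that $\sigma$ is injective on $\ker \rho$, i.e.\ injective on $H^1$ of the source complex. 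Since $\Lambda^\#$ is skew-symmetric, the induced chain map is, up to sign, its own dual; so injectivity on $H^1$ is automatically dual to surjectivity on $H^0$, and combined with the Euler-characteristic equality yields a quasi-isomorphism. Conversely, both conditions on $\sigma$ can be read off from a core quasi-isomorphism in the obvious way.

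The main obstacle is hidden in step two: the BCO correspondence is the substantive input, encoding the Lagrangian axiom i) via VB-algebroid linearity of $\Lambda^\#$ and the involutivity axiom ii) via closedness of $\Lambda$. Once that theorem is taken as given, the remainder of the argument is a short cohomological bookkeeping exercise identical in spirit to Lemma \ref{lemma:dirac}, and the only vigilance needed is over sign conventions when identifying the induced map on core complexes with $(\sigma, -\sigma^*)$.
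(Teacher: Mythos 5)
Your proposal is correct and follows essentially the same route as the paper: the paper likewise reformulates Dirac structures via the IM-2-form axioms with the rank and injectivity conditions, invokes \cite[Thm 4.7]{BCO} for the correspondence $\Lambda=-\sigma^*\omega_{can}$, and handles the non-degeneracy conditions by ``the same computation as in Lemma \ref{lemma:dirac}'', which is precisely your Euler-characteristic plus skew-self-duality bookkeeping. No gaps.
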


The connection between pre-symplectic groupoids and Dirac structures becomes quite evident now. Let $L$ be a Dirac structure, and asssume that $L$, as a Lie algebroid, integrates to a source-simply connected Lie groupoid $G\toto M$. Then, since Lie's second theorem holds for VB-algebroid morphisms \cite[Prop 4.3.6]{bcdh}, and since the base is source-simply connected if and only if the total groupoid is \cite[Rmk 3.1.1]{bcdh}, the morphism $\Lambda^{\#}:TA\to T^*A$ integrates to a unique morphism of Lie groupoids $\omega^{\#}:TG\to T^*G$ which is a VB-map, necessarily induced by a closed 2-form $\omega\in\Omega^2(G)$. The VB-map $\omega^{\#}:TG\to T^*G$ is actually a VB-Morita map, since the induced map at the level of complexes $(\sigma,-\sigma^*):(A_x\to T_xM)\to (T^*_xM\to A^*_x)$ is a quasi-isomorphism, and we easily recover (the non-twisted versions of) the main results in \cite{bcwz}:

\begin{corollary}[{cf. \cite[Thms 2.2,2.4]{bcwz}}]
If a Dirac structure $L\subset TM\oplus T^*M$ integrates, as a Lie algebroid, to a source-simply connected Lie groupoid $G\toto M$, then $\u G$ inherits a unique structure of pre-symplectic groupoid. Differentiation yields a one-to-one correspondence between pre-symplectic structures on $\u G$ and Dirac structures on $M$.
\end{corollary}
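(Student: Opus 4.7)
The plan is to lift the Dirac structure along the integration $A\to G$ by invoking Lie's second theorem for VB-algebroid morphisms and then applying Proposition \ref{prop:presymplectic}. First, I would translate the hypothesis into the VB-language established just above: the Dirac structure $L$ is the same as a closed IM-2-form $\Lambda\in\Omega^2(A)$ whose associated map $\Lambda^{\#}:TA\to T^*A$ is a VB-algebroid morphism and a fiberwise quasi-isomorphism between the tangent complex $(\rho:A\to TM)$ and the cotangent complex $(\rho^*:T^*M\to A^*)$. Note that $\Lambda^{\#}$ is automatically skew-symmetric, as $\Lambda=-\sigma^*\omega_{can}$.

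Next, I would integrate. Since $\u G$ is source-simply connected, so is $\u{TG}$ (cf.\ \cite[Rmk 3.1.1]{bcdh}); Lie's II theorem for VB-algebroid morphisms \cite[Prop 4.3.6]{bcdh} then produces a unique VB-groupoid morphism $\omega^{\#}:TG\to T^*G$ over the identity integrating $\Lambda^{\#}$. By the uniqueness clause, the skew-symmetry $\Lambda^{\#}=-(\Lambda^{\#})^*$ integrates to $\omega^{\#}=-(\omega^{\#})^*$, so $\omega^{\#}$ arises from a (unique) 2-form $\omega\in\Omega^2(G)$; its multiplicativity is automatic from $\omega^{\#}$ being a VB-groupoid morphism, and closedness is obtained by integrating the infinitesimal identity $d\Lambda=0$ (via a Van Est type argument, or equivalently via the correspondence between multiplicative 2-forms and IM-2-forms established in \cite{bcwz}).

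Then I would apply Proposition \ref{prop:presymplectic}: to conclude that $(G,\omega)$ is pre-symplectic, it suffices to verify that $\omega^{\#}:TG\to T^*G$ is VB-Morita. By Theorem \ref{thm:VB-morita} combined with Lemma \ref{lemma:dirac}, this reduces to checking that the induced maps on the fibers over each $x\in M$ are quasi-isomorphisms between the tangent and cotangent complexes of $A$, which is precisely the hypothesis inherited from $\Lambda^{\#}$. Conversely, given a pre-symplectic groupoid $(G,\omega)$, differentiating yields an IM-2-form $\Lambda$ on $A$, and Proposition \ref{prop:presymplectic} combined with Lemma \ref{lemma:dirac} guarantees the required quasi-isomorphism property, so $\Lambda$ defines a Dirac structure. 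The two constructions are mutually inverse by the uniqueness in Lie II applied in both directions.

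The main obstacle I expect is ensuring that the integrated VB-morphism $\omega^{\#}$ really comes from a \emph{closed} 2-form: skew-symmetry is immediate from Lie II uniqueness, and multiplicativity is built into the VB-framework, but the closedness statement requires that integration intertwines the de Rham differential on $A$ with the multiplicative de Rham differential on $G$. This is the crux of the infinitesimal-to-global correspondence in \cite{bcwz}, and in our setup it is most cleanly handled either by invoking their Van Est argument directly or by observing that $d\omega$ is itself a multiplicative form whose infinitesimal counterpart $d\Lambda$ vanishes, hence $d\omega=0$ by the uniqueness part of Lie II.
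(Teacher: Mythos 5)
Your proposal is correct and follows essentially the same route as the paper: translate the Dirac structure into the closed IM-2-form $\Lambda^{\#}:TA\to T^*A$ inducing a quasi-isomorphism of core complexes, integrate it by Lie II for VB-algebroid morphisms to a VB-map $\omega^{\#}:TG\to T^*G$, and conclude via Proposition \ref{prop:presymplectic} and Lemma \ref{lemma:dirac}. The extra care you take with skew-symmetry (via Lie II uniqueness) and closedness (via the vanishing of the multiplicative form $d\omega$ integrating $d\Lambda=0$) fills in details the paper compresses into the phrase ``necessarily induced by a closed 2-form,'' and is a welcome addition rather than a deviation.
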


Summarizing, integrable Dirac (resp. Poisson) structures provide a class of examples of Lie groupoids having quasi-isomorphic (resp. isomorphic) adjoint and coadjoint representations up to homotopy, and actually, this has to be the case if the quasi-isomorphism (resp. isomorphism) is skew-symmetric. Our approach seems to unveil some connections between {\em Dirac geometry} and the pre-symplectic groupoids of \cite{bcwz} on the one side, and {\em derived symplectic geometry} and the quasi-symplectic groupoids of \cite{x} on the other side. We will further explore this elsewhere.


\section{2-Vector bundles over stacks}


We study here the category of VB-groupoids over $\u G$ as an invariant of $\u G$. First we show than any Morita map over $\u G$ is a categorical equivalence. Even though VB-groupoids with trivial core are the same as representations and hence a Morita invariant, we show with a simple example that general VB-groupoids are not. Nevertheless, our main theorem here shows that the derived category of VB-groupoids is so, solving an instance of a problem posed in \cite{ac} about representations up to homotopy, and providing a notion of 2-vector bundles over stacks that includes the tangent construction.

\subsection{VB-groupoids over a fixed base}


Given $\phi,\psi:\u\Gamma'=(\Gamma'\toto E')\to\u\Gamma=(\Gamma\toto E)$ VB-maps over $\u G=(G\toto M)$, an {\bf isomorphism}
$\alpha:\phi\cong\psi$ over $\u G$ consists of a vector bundle map
$\alpha:E'\to\Gamma|_M$ covering $\id_M$ such that 
$s\alpha(e)=\phi(e)$, $t\alpha(e)=\psi(e)$ and for every \smash{$e'\xfrom v e$} in $\u\Gamma'$ the equation $\psi(v)\alpha(e)=\alpha(e')\phi(v)$ holds. We say that a map $\phi$ is an {\bf equivalence} if it admits a quasi-inverse, namely an inverse up to isomorphisms.


We can realize an isomorphism $\alpha:\phi\cong\psi:\u\Gamma'\to\u\Gamma$ over $\u G$ as a VB-map, 
by using the {\bf arrow VB-groupoid} $\u\Gamma^I$, a variant of the construction in \cite[4.1]{dh}. Its objects are the vertical arrows, $\u\Gamma^I_0=\Gamma|_M=C\oplus E$, and its arrows are the commutative squares between them, $\u\Gamma^I_0=t^*C\oplus\Gamma\oplus s^* C$. 
The structure maps can  be witten as follows:
$$s(c',v,c)=(c,s(v)) \qquad t(c',v,c)=(c',t(v)) \qquad
i(c',v,c)=(c,i(v),c')$$
$$u(c,e)=(c,u(e),c) \qquad m((c'',v',c'),(c',v,c))=(c'',v'v,c)$$
The core sequence of $\u\Gamma^I$ identifies canonically with $C\oplus C\to C\oplus E$, $(c',c)\mapsto(c',\partial(c))$.


There are two canonical projections $\sigma,\tau:\u\Gamma^I\to\u\Gamma$ corresponding to the source and target, and an inclusion $\mu:\u\Gamma\to\u\Gamma^I$ corresponding to the unit. The maps $\sigma,\tau$ are isomorphic through the identity map $\Gamma^I_0\to\Gamma$, and this isomorphism is {\bf universal}:

\begin{lemma}
There is a 1-1 correspondence between isomorphisms $\alpha:\phi\cong\psi:\Gamma'\to\Gamma$ over $\u G$ and VB-maps $\alpha:\Gamma'\to\Gamma^I$ such that $\sigma\alpha=\phi$ and $\tau\alpha=\psi$. 
\end{lemma}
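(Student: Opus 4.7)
The approach is to build the correspondence in both directions by unpacking the tautological identification of morphisms of $\u\Gamma^I$ with commutative squares in $\u\Gamma$, which is precisely the data encoding the naturality condition for an isomorphism $\phi\cong\psi$. The lemma is modelled on the analogous statement for the Lie groupoid arrow construction in \cite[\S 4.1]{dh} (used also in Lemma \ref{lemma:cleavage-as-map}), and the VB-groupoid version follows the same pattern.

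For the forward direction, suppose $\alpha:E'\to\Gamma|_M$ is an isomorphism with $s\alpha=\phi$, $t\alpha=\psi$, satisfying $\psi(v)\alpha(e)=\alpha(e')\phi(v)$ for every $v:e\to e'$ in $\u\Gamma'$. I would define $\tilde\alpha:\u\Gamma'\to\u\Gamma^I$ by setting $\tilde\alpha_0=\alpha$ via the identification $\u\Gamma^I_0=\Gamma|_M$, and, on arrows, by sending $v:e\to e'$ to the commutative square whose vertical edges are $\alpha(e),\alpha(e')$ and whose horizontal edges are $\phi(v),\psi(v)$. In the decomposition $\Gamma^I_1\cong t^*C\oplus\Gamma\oplus s^*C$, this amounts to an explicit formula built from the core components of $\alpha(e),\alpha(e')$ together with $\phi(v)$. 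One then has to verify that $\tilde\alpha$ is linear, smooth, and compatible with source, target, units, inversion, and multiplication in $\u\Gamma^I$; the only nontrivial item is multiplicativity, and it is equivalent to the naturality identity for $\alpha$. The conditions $\sigma\tilde\alpha=\phi$ and $\tau\tilde\alpha=\psi$ hold by construction.

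For the reverse direction, given a VB-map $\tilde\alpha:\u\Gamma'\to\u\Gamma^I$ with $\sigma\tilde\alpha=\phi$ and $\tau\tilde\alpha=\psi$, I would set $\alpha:=\tilde\alpha_0:E'\to\Gamma^I_0=\Gamma|_M$. The equalities $s\alpha=\phi$ and $t\alpha=\psi$ are then immediate by composing with the source and target of $\u\Gamma^I$ (which agree on objects with $\sigma_0$ and $\tau_0$). For the naturality equation, apply $\tilde\alpha_1$ to an arrow $v:e\to e'$ of $\u\Gamma'$; the output is, tautologically, a commutative square in $\u\Gamma$ whose vertical edges are $\alpha(e),\alpha(e')$ and whose horizontal edges are $\phi(v),\psi(v)$, so commutativity of that square reads exactly $\psi(v)\alpha(e)=\alpha(e')\phi(v)$.

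The two constructions are mutually inverse by construction, and no substantial obstacle is expected: the only subtlety is keeping careful track of the identifications $\Gamma|_M\cong C\oplus E$ and $\Gamma^I_1\cong t^*C\oplus\Gamma\oplus s^*C$ when unfolding the structure maps of $\u\Gamma^I$. This is routine because $\u\Gamma^I$ has been engineered precisely so that a morphism into it over $\u G$ is the same as a natural transformation between its $\sigma$- and $\tau$-composites, exhibiting $(\sigma,\tau)$ as the universal isomorphism in $VB(\u G)$.
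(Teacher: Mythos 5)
Your proposal is correct and is exactly the tautological unpacking the paper intends: the lemma is stated without proof precisely because $\u\Gamma^I$ is engineered so that its arrows are commutative squares, and your two constructions (assembling a square from $\alpha(e),\alpha(e'),\phi(v),\psi(v)$, and reading off the four edges of a square) are the obvious mutually inverse bijections. One minor bookkeeping remark: in the decomposition $\Gamma^I_1\cong t^*C\oplus\Gamma\oplus s^*C$ the naturality identity $\psi(v)\alpha(e)=\alpha(e')\phi(v)$ is what makes the top edge of your square equal to $\psi(v)$ (i.e.\ it is used for $\tau\tilde\alpha=\psi$ and well-definedness), while multiplicativity of $\tilde\alpha$ then follows from that of $\phi$ and the interchange law, so naturality enters slightly differently than where you place it, but the argument goes through unchanged.
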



A VB-map $\phi:\u\Gamma'\to\u\Gamma$ over $\u G$ is a {\bf fibration} if it yields an epimorphism between the cores. This is an adaptation of  the usual notion of fibration between Lie groupoids (cf. \cite{dhf,m}).
The following standard argument shows that every VB-map over $\u G$ is a fibration up to equivalence.
Given $\phi$ as before, we build the fibered product $\u\Gamma'\times_{\u\Gamma}\u\Gamma^I $ between $\phi$ and $\tau$, and consider the {\bf canonical factorization} (cf. \cite[Rmk 6.2.6]{dhf}):
$$\xymatrix{
& \u\Gamma'\times_{\u\Gamma}\u\Gamma^I \ar[dr]^{\tilde\phi=\tau\pi_2} & \\
\u\Gamma' \ar[rr]_{\phi} \ar[ru]^{\tilde\iota=(\id,\mu\phi)} & & \u\Gamma
}$$
Then $\tilde\iota$ is an equivalence, with quasi-inverse the projection $\pi_1$, and $\tilde\phi$ is a fibration.


When working with general Lie groupoids, every equivalence is a Morita map, as it easily follows from characterization \ref{prop:criterion}, but in general a Morita map need not to be an equivalence. Examples of this are discussed in \cite{dh}. The next proposition shows that within the VB framework these two notions agree. In light of \ref{thm:Grothendieck}, we can think of this as a version of \cite[Prop. 3.2.8]{ac}, though our proof is completely independent.

\begin{proposition}\label{prop:VB-Morita=equivalence}
A VB-map $\phi:\u\Gamma'\to\u\Gamma$ over $\u G$ is Morita if and only if it is an equivalence.
\end{proposition}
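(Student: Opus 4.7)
The easy direction, equivalence implies VB-Morita, I would handle as follows. If $\phi:\u\Gamma'\to\u\Gamma$ is an equivalence over $\u G$, then restriction to each fiber gives an equivalence of 2-vects $\phi_x:\u\Gamma'_x\to\u\Gamma_x$, which is VB-Morita by Lemma \ref{lemma:2vects}. The base map is $\id_{\u G}$, trivially Morita, so Theorem \ref{thm:VB-morita} gives that $\phi$ is VB-Morita.

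For the converse, I would exploit the canonical factorization $\phi=\tilde\phi\circ\tilde\iota$ established above, with $\tilde\iota$ an equivalence and $\tilde\phi$ a fibration. By the direction just proved, $\tilde\iota$ is VB-Morita; combined with the $2$-out-of-$3$ property for Morita maps of Lie groupoids (immediate from Proposition \ref{prop:criterion}), this forces $\tilde\phi$ to be VB-Morita. Since a composition of equivalences is an equivalence, it therefore suffices to show that any \emph{Morita fibration} $\phi:\u\Gamma'\to\u\Gamma$ over $\u G$ admits a quasi-inverse.

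Given such a $\phi$, I would translate via Theorem \ref{thm:Grothendieck} into 2-term ruths: the fibration condition means the core complex map $\phi_\bullet:(C'\to E')\to(C\to E)$ is surjective in each degree, while VB-Morita (together with Lemma \ref{lemma:2vects}) means it is a fiberwise quasi-isomorphism. The kernel complex $(K_C\xrightarrow{\partial_K} K_E)$ is then acyclic, so $\partial_K$ is a vector bundle isomorphism. After choosing splittings of the short exact sequences on the cores and correcting them by $(\partial_K)^{-1}$, I would obtain a chain-level section $(s_E,s_C)$ of $\phi_\bullet$. Fixing cleavages on $\u\Gamma,\u\Gamma'$ and decomposing them as in Proposition \ref{prop:breaking}, I would promote $(s_E,s_C)$ to a ruth morphism $\psi=(s_E,s_C,\nu)$ by defining $\nu_g(e)\in K_C$ as $(\partial_K)^{-1}$ of the failure $s_E\rho^E_g(e)-\rho^{E'}_g s_E(e)$ (which lands in $K_E$ because $\phi_E$ kills it), and checking the remaining identities of Proposition \ref{prop:ruthmap} by the same trick using acyclicity of the kernel.

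The main obstacle is then not producing the section itself but exhibiting an isomorphism $\psi\phi\cong \id_{\u\Gamma'}$, since $\phi\psi=\id_{\u\Gamma}$ will hold by construction. For this I would use the arrow VB-groupoid $\u\Gamma'^I$ (and the universal property in the lemma above) to encode such an isomorphism as a VB-map $\u\Gamma'\to\u\Gamma'^I$ over $\u G$ whose compositions with $\sigma,\tau$ are $\id$ and $\psi\phi$. The difference $\psi\phi-\id$ takes values in the kernel of $\phi$, and the acyclicity $\partial_K:K_C\xrightarrow{\cong} K_E$ is precisely what allows me to cook up the required homotopy (in the spirit of Remark \ref{rmk:acyclic}), thereby completing the construction of a quasi-inverse.
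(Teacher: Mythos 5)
Your proposal is correct in outline but diverges from the paper at the decisive step, so it is worth comparing the two. Both arguments reduce, via the canonical factorization and a two-out-of-three argument, to showing that a VB-Morita \emph{fibration} $\phi:\u\Gamma'\to\u\Gamma$ admits a quasi-inverse, and both exploit that the kernel $\u K$ is acyclic. From there the paper stays geometric: it chooses a linear complement $H_0$ of $\u K_0\subset E'$, sets $H_1=t^{-1}(H_0)\cap s^{-1}(H_0)$, and checks by a dimension count that $\u\Gamma'=\u H\oplus\u K$ with $\phi|_{\u H}$ an honest isomorphism of VB-groupoids; the quasi-inverse is then $(\phi|_{\u H})^{-1}$, and the homotopy $\psi\phi\cong\id$ comes for free from the canonical arrows of the acyclic summand $\u K$ (cf. Remark \ref{rmk:acyclic}). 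You instead pass through Theorem \ref{thm:Grothendieck} and build the quasi-inverse algebraically as a ruth morphism $(s_E,s_C,\nu)$, correcting arbitrary splittings by $\partial_K^{-1}$. This works, but it front-loads verifications that the splitting argument sidesteps: (i) the failure $s_E\rho^E_g-\rho^{E'}_gs_E$ lands in $K_E$ only if the cleavages on $\u\Gamma$ and $\u\Gamma'$ are chosen compatibly (i.e.\ so that the $\mu$-component of $\phi$ in Proposition \ref{prop:ruthmap} vanishes); otherwise a correction term $\partial\circ\mu_{\phi,g}\circ s_E$ appears and must be absorbed into $\nu$; (ii) for the same reason $\phi\psi=\id$ "by construction" is optimistic --- in general you only get $\phi\psi\cong\id$, which is all you need; (iii) the naturality of the homotopy $\alpha(e')=\bigl(\partial_K^{-1}(s_E\phi_E(e')-e'),\,e'\bigr)$ witnessing $\psi\phi\cong\id$ is a genuine computation against the fourth identity of Proposition \ref{prop:ruthmap}, not an automatic consequence of acyclicity, and is the one place where your sketch would need to be completed. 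None of these is an obstruction, so your route is viable and arguably more explicit (it exhibits the quasi-inverse in ruth coordinates), whereas the paper's $\u H\oplus\u K$ decomposition is shorter and also yields Corollary \ref{cor:stable} directly as a by-product, which your version would not.
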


\begin{proof}
Given $\phi:\u\Gamma'\to\u\Gamma$ a VB-Morita map, in the above canonical factorization
$\phi=\tilde\phi\tilde\iota$, we have that $\tilde\iota$ is an equivalence, and $\tilde\phi$ is not only a fibration, but also VB-Morita, by a two-out-of-three argument.
It is enough to show that $\tilde\phi$ is an equivalence. Or in other words, we may assume that the original $\phi$ is a VB-Morita fibration.

If $\phi$ is a VB-Morita fibration, it is fiberwise an epimorphism on the cores and an isomorphism on the cohomologies, then by the 5 lemma it must induce epimorphisms $E'_x\to E_x$ and also $\Gamma'_x\to\Gamma_x$. The kernel $\u K$ of $\phi$ is then a well-defined VB-groupoid. Moreover, $\u K$ must be acyclic, as it follows from Thm \ref{thm:VB-morita} and the long exact sequence in fiberwise cohomology induced by
$$0\to \u K \to \u\Gamma' \to \u\Gamma \to 0.$$

Now let $H_0$ be any linear complement for $\u K_0\subset E'$.
Since $(t,s):\Gamma'\to E'\oplus E'$ is transverse to $H_0\oplus H_0$, we have that $H_1=t^{-1}(H_0)\cap s^{-1}(H_0)$ is a vector bundle of twice the corank of $H_0$, and $\u H=(H_1\toto H_0)$ is a well-defined VB-groupoid. By counting dimensions we conclude that $\u\Gamma'=\u H\oplus \u K$, then the restriction of $\phi$ to $\u H$ is invertible, and that an inverse for $\phi|_{\u H}$ is a quasi-inverse to $\phi$, concluding the proof.
\end{proof}


As a corollary of the proof of the previous proposition, we have the following interesting consequence, reminiscent of the notion of stable isomorphism in K-theory:

\begin{corollary}\label{cor:stable}
Two VB-groupoids $\u\Gamma,\u\Gamma'$ over $\u G$ are equivalent if and only if there are acyclic VB-groupoids $\u\Omega,\u\Omega'$ over $\u G$ such that $\u\Gamma\oplus\u\Omega$ and $\u\Gamma'\oplus\u\Omega'$ are isomorphic.
\end{corollary}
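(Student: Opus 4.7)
My plan has two parts, corresponding to the two directions of the stated equivalence.

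For the sufficient condition, assume we are given acyclic $\u\Omega, \u\Omega'$ with $\u\Gamma\oplus\u\Omega\cong \u\Gamma'\oplus\u\Omega'$. I would first observe that for any acyclic VB-groupoid $\u\Omega$ over $\u G$, the canonical projection $\u\Gamma\oplus\u\Omega\to \u\Gamma$ is a VB-map which is the identity on the base and whose effect on each fiber is the projection of a direct sum of core complexes onto a summand whose complement is acyclic, hence a fiberwise quasi-isomorphism. By Theorem \ref{thm:VB-morita} (together with Lemma \ref{lemma:2vects}) this projection is VB-Morita, and by Proposition \ref{prop:VB-Morita=equivalence} it is therefore an equivalence. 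The same applies to the projection from $\u\Gamma'\oplus\u\Omega'$. Chaining the equivalences $\u\Gamma\xfrom{\sim}\u\Gamma\oplus\u\Omega\cong \u\Gamma'\oplus\u\Omega'\xto{\sim}\u\Gamma'$ yields the desired conclusion.

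For the necessary condition, assume that $\u\Gamma$ and $\u\Gamma'$ are equivalent. By Proposition \ref{prop:VB-Morita=equivalence} there exists a VB-Morita map $\phi:\u\Gamma'\to\u\Gamma$ over $\u G$, and I would invoke the canonical factorization $\phi=\tilde\phi\circ\tilde\iota$ used in its proof, where $\u\Gamma'':=\u\Gamma'\times_{\u\Gamma}\u\Gamma^I$, the inclusion $\tilde\iota=(\id,\mu\phi)$ is an equivalence with quasi-inverse $\pi_1$, and $\tilde\phi=\tau\pi_2$ is a VB-Morita fibration. The crux is that $\u\Gamma''$ admits two parallel splittings. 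First, the argument of Proposition \ref{prop:VB-Morita=equivalence} shows that any VB-Morita fibration produces a decomposition of the total groupoid as the target plus an acyclic kernel; applied to $\tilde\phi$, this gives $\u\Gamma''\cong \u\Gamma\oplus\u K$ with $\u K$ acyclic. Second, $\pi_1:\u\Gamma''\to\u\Gamma'$ is itself a VB-Morita fibration: it is VB-Morita because it is the quasi-inverse to the equivalence $\tilde\iota$, and it is a fibration because the VB-section $\tilde\iota$ forces surjectivity on the cores. The very same splitting recipe therefore yields $\u\Gamma''\cong \u\Gamma'\oplus\u K'$ with $\u K'$ acyclic. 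Combining the two identifications produces $\u\Gamma\oplus\u K\cong \u\Gamma'\oplus\u K'$, which is the asserted stable isomorphism.

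The one step I expect to require a separate line of justification is that $\pi_1$ qualifies for the splitting recipe extracted from the proof of Proposition \ref{prop:VB-Morita=equivalence}. This reduces to the elementary observation that a VB-map admitting a VB-section is automatically a fibration, since it is then surjective at every level including on the core; everything else is a second, symmetric application of the technique already developed inside that proof.
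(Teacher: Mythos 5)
Your proposal is correct and follows essentially the same route as the paper: factor the VB-Morita map through $\u\Gamma'\times_{\u\Gamma}\u\Gamma^I$ and split that middle object twice, once via the surjective quasi-isomorphism $\tilde\phi$ and once via the retraction $\pi_1$ of $\tilde\iota$ (your observation that a VB-map with a VB-section is a fibration is exactly the paper's remark that the inclusion always admits a retraction). You additionally spell out the easy converse direction, which the paper leaves implicit.
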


\begin{proof}
A quasi-isomorphism $\phi:\u\Gamma'\to\u\Gamma$ factors as $\u\Gamma'\xto{\tilde\iota}\u{\tilde\Gamma'}\xto{\tilde\phi}\u\Gamma$ as before, with $\tilde\iota$ an injective quasi-isomorphism and $\tilde\phi$ a surjective quasi-isomorphism. It follows that $\tilde\phi$ has a section, hence $\u{\tilde\Gamma'}\cong\u\Gamma\oplus\u\Omega$ for $\u\Omega=\ker(\tilde\phi)$. 
On the other hand, the inclusion $\iota$ has always a retraction $\pi:\u{\tilde\Gamma'}\to\u\Gamma'$ and therefore $\u{\tilde\Gamma'}\cong\u\Gamma'\oplus\u\Omega'$ with $\u\Omega'=\ker(\pi)$.
\end{proof}


\subsection{Morita invariance of VB-groupoids}


VB-groupoids over $\u G$, together with VB-maps over $\u G$, and isomorphisms of maps over $\u G$, form a 2-category. For the sake of simplicity, we will restrict our attention to the following 1-categories. 
The {\bf VB-groupoid category} $VB(\u G)$ has objects the VB-groupoids and arrows the VB-maps, and the
{\bf VB-groupoid derived category} $VB[\u G]$ has objects the VB-groupoids and arrows the isomorphism classes of VB-maps. 


As recalled before, the pullback of VB-groupoids induces a {\bf base-change} functor $\phi^*:VB(\u G)\to VB(\u G')$, see eg. \cite[Rmk 3.2.7]{bcdh}.

\begin{lemma}
Given $\phi:\u G'\to\u G$ a map of Lie groupoids, the base-change functor descends to the derived categories to give $\phi^*: VB[\u G]\to VB[\u G']$.
\end{lemma}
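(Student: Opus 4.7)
The plan is to show that $\phi^*$ preserves the equivalence relation defining the derived category, namely, that it sends isomorphic VB-maps (over $\u G$) to isomorphic VB-maps (over $\u G'$). Once this is established, the assignment on morphisms $[\psi]\mapsto [\phi^*\psi]$ is well defined, and functoriality is inherited from the 1-categorical base-change functor $\phi^*:VB(\u G)\to VB(\u G')$.

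The cleanest route is through the arrow VB-groupoid. Recall from the lemma preceding the statement that an isomorphism $\alpha:\psi_1\cong\psi_2:\u\Gamma_1\to\u\Gamma_2$ over $\u G$ is the same data as a VB-map $\alpha:\u\Gamma_1\to\u\Gamma_2^I$ over $\u G$ with $\sigma\alpha=\psi_1$ and $\tau\alpha=\psi_2$. So, given such an $\alpha$, I would apply the functor $\phi^*$ to obtain a VB-map
\[
\phi^*\alpha:\phi^*\u\Gamma_1\longrightarrow \phi^*(\u\Gamma_2^I)
\]
over $\u G'$, and then rely on the canonical identification $\phi^*(\u\Gamma_2^I)\cong(\phi^*\u\Gamma_2)^I$.

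The first key step is verifying this identification. Both sides are obtained by iterated groupoid-theoretic fiber products, and the arrow construction $(-)^I$ is itself built by a fiber product (of three copies of $\u\Gamma$ over $\u G$), so the two commute by the universal property of fiber products; equivalently, a point of either side is the same data of a pullback square in $\u\Gamma_2$ sitting over a pair of composable arrows in $\u G'$. This identification is compatible with $\sigma,\tau,\mu$. The second step, essentially formal once the identification is in place, is to check that under it $\sigma\circ\phi^*\alpha=\phi^*\psi_1$ and $\tau\circ\phi^*\alpha=\phi^*\psi_2$, which follows from the functoriality of $\phi^*$ applied to the identities $\sigma\alpha=\psi_1$, $\tau\alpha=\psi_2$. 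Invoking the lemma in reverse then produces the desired isomorphism $\phi^*\alpha:\phi^*\psi_1\cong\phi^*\psi_2$ over $\u G'$.

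I expect no serious obstacle here; the main, and essentially only, point to spell out carefully is the canonical isomorphism $\phi^*(\u\Gamma^I)\cong(\phi^*\u\Gamma)^I$, which amounts to a routine juggling of fiber products. Alternatively, one could avoid the arrow VB-groupoid by constructing $\phi^*\alpha$ directly: an isomorphism $\alpha:E_1\to (\Gamma_2)|_M$ satisfying the three compatibility conditions pulls back componentwise along $\phi_0:M'\to M$ to a bundle map $\phi_0^*\alpha:\phi_0^*E_1\to\phi_0^*(\Gamma_2)|_{M'}$, i.e. an arrow in $\phi^*\u\Gamma_2$ restricted to units over $M'$, and the three conditions defining an isomorphism of VB-maps pull back verbatim by naturality of $s$, $t$, and multiplication. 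Either approach concludes the proof.
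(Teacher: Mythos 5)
Your proposal is correct and takes essentially the same approach as the paper's own (primary) argument: realizing an isomorphism of VB-maps as a VB-map into the arrow VB-groupoid and invoking the canonical identification $\phi^*(\u\Gamma^I)\cong(\phi^*\u\Gamma)^I$ compatible with $\sigma,\tau,\mu$. The paper additionally sketches a second route via viewing $VB[\u G]$ as the localization at VB-Morita maps and the stability of quasi-isomorphisms under base-change, but this is not needed.
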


\begin{proof}
One way to see this is by realizing isomorphisms of maps as VB-maps into the arrow VB-groupoid $\u\Gamma^I$, and noting that there is a canonical isomorphism $\phi^*(\u\Gamma^I)\cong\phi^*(\u\Gamma)^I$ compatible with $\sigma,\tau,\mu$, hence the base-change of two isomorphic maps are isomorphic through the pullback isomorphism. 
Other way is noting that, in light of \ref{prop:VB-Morita=equivalence}, the category $VB[\u G]$ is the localization of $VB(G)$ by the VB-Morita maps, that the VB-Morita maps over $\u G$ are the fiberwise quasi-isomorphisms (Theorem \ref{thm:VB-morita}), and that the quasi-isomorphisms are stable under base-change.
\end{proof}


It is well-known that the category of representations $Rep(\u G)$ of a Lie groupoid $\u G$ is a Morita invariant, it only depends on the orbit stack $M//G$. The question of whether the VB-groupoids, as a natural extension of representations, are a Morita invariant has been open for a while, and admits two variants, depending on whether one works on the derived category. 

\begin{problem}
Are the categories $VB(\u G)$ or $VB[\u G]$ a Morita invariant?
\end{problem}


Regarding the Morita invariance of $VB(G)$ we can easily find counter-examples. Next we provide a simple example where the base-change functor along a Morita fibration is neither essentially surjective nor fully faithful.

\begin{example}
Let $\u G'=(S^1\times S^1\toto S^1)$ be the pair groupoid of the circle, $\u G=(\ast\toto\ast)$ be the one-point groupoid, and $\pi:\u G'\to \u G$ the projection, that is a Morita fibration. 
If $E\to S^1$ is a non-trivial vector bundle, eg the Mobius strip, then its pair groupoid $E\times E\toto E$ is a VB-groupoid over $\u G'$ that is not isomorphic to a base-change VB-groupoid.
If $\u\Gamma=(\R\times\R\toto\R)$ is the pair groupoid of the real line, viewed as an acyclic VB-groupoid over $\u G$, then the VB-maps $\u\Gamma\to\u\Gamma$ over $\u G$ correspond to linear maps $\R\to\R$, whereas a VB-map $\phi^*(\u\Gamma)\to\phi^*(\u\Gamma)$ over $\u G'$ correspond to a linear map $\R_{S^1}\to\R_{S^1}$, that is the same as a function $S^1\to\R$.
\end{example}


We address now the more refined question regarding the derived categories, and present our main theorem, that establishes the Morita invariance of $VB[\u G]$. 

\begin{theorem}\label{thm:VB-stacks}
If $\phi:\u{\tilde G}\to \u G$ is a Morita map, then the base-change functor $\phi^*:VB[\u G]\to VB[\u{\tilde G}]$ is an equivalence.
\end{theorem}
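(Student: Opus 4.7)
The plan is to prove essential surjectivity and full faithfulness of $\phi^*$, drawing on the Grothendieck correspondence (Theorem \ref{thm:Grothendieck}) and the identification of VB-Morita maps over a fixed base with categorical equivalences (Proposition \ref{prop:VB-Morita=equivalence}). That $\phi^*$ descends to the derived categories follows from Corollary \ref{cor:VBpullback} and a 2-out-of-3 argument: the pullback along $\phi$ of a VB-Morita map over $\u G$ is again VB-Morita over $\u{\tilde G}$.

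For full faithfulness, the idea is to compute $\operatorname{Hom}_{VB[\u G]}(\u\Gamma_1,\u\Gamma_2)$ cohomologically. Under the Grothendieck correspondence, VB-maps taken up to 2-isomorphism (captured by the arrow VB-groupoid $\u\Gamma_2^I$) translate, via Proposition \ref{prop:ruthmap}, into morphisms of 2-term ruths modulo chain homotopy, and this set is exactly $H^0$ of an internal $\operatorname{Hom}$ complex of the form $C^\bullet(\u G,\E_1\to\E_2)$. The Morita invariance of this cohomology is the corollary to Theorem \ref{thm:cohomology} applied to the internal hom ruth, and this delivers the desired bijection on Hom-sets.

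For essential surjectivity, given $\u{\tilde\Gamma}$ over $\u{\tilde G}$, the task is to descend it (up to VB-Morita equivalence) to a VB-groupoid over $\u G$. I would first reduce, via a factorization of $\phi$, to the case where $\phi$ is a Morita fibration surjective on objects; the complementary piece of the factorization is a smooth equivalence of Lie groupoids admitting a smooth quasi-inverse, along which base-change is already an equivalence of $1$-categories. For the reduced case, one considers the iterated fiber product $\u{\tilde G}\times_{\u G}\u{\tilde G}$ with its two Morita projections $\pi_1,\pi_2$: by Corollary \ref{cor:VBpullback}, both $\pi_1^*\u{\tilde\Gamma}$ and $\pi_2^*\u{\tilde\Gamma}$ are VB-Morita equivalent in a canonical way, yielding a descent datum that glues $\u{\tilde\Gamma}$ to a VB-groupoid $\u\Gamma$ over $\u G$.

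The main obstacle will be performing this descent rigorously. VB-groupoids encode non-abelian higher data (the curvature $\gamma$ of Proposition \ref{prop:breaking}), and strict descent typically fails for such objects; only derived descent is automatic. The 2-term ruth viewpoint mitigates this by making the cocycles homologically tractable, and the stable-isomorphism description of $VB[\u G]$ in Corollary \ref{cor:stable} provides extra flexibility, since one can replace $\u{\tilde\Gamma}$ by its direct sum with an acyclic piece to strictify the descent cocycle. Verifying that such a strictification can be carried out, and that it produces a VB-groupoid $\u\Gamma$ whose pullback is VB-Morita to $\u{\tilde\Gamma}$, is where the main technical work is expected to lie.
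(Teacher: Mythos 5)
Your outline correctly isolates the two halves of the problem and even anticipates the paper's final remark that the proof is cohomological in nature, but both halves have genuine gaps. For full faithfulness, you propose to identify $\mathrm{Hom}_{VB[\u G]}(\u\Gamma_1,\u\Gamma_2)$ with $H^0$ of an internal Hom complex and then invoke the corollary to Theorem \ref{thm:cohomology}. The obstruction is that the internal Hom of two 2-term ruths $E\oplus C[1]$ and $E'\oplus C'[1]$ is concentrated in degrees $-1,0,1$ (with graded pieces $\mathrm{Hom}(C,E')$, $\mathrm{Hom}(E,E')\oplus\mathrm{Hom}(C,C')$, $\mathrm{Hom}(E,C')$), so it is a \emph{3-term} ruth; the Morita invariance established in the paper (Theorem \ref{thm:cohomology} via VB-cohomology, and its corollary) covers only 2-term coefficients, and the paper explicitly leaves the general case open. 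So the key input you cite does not apply as stated. The paper instead reduces to \v{C}ech fibrations $\pi_\U$ and proves the needed vanishing by hand: the failure of a map $\psi$ over $\u{G_\U}$ to descend is a cocycle $\beta_{ji}$ on the kernel $\u K$ of $\pi_\U$, which is integrated by a partition of unity subordinate to $\U$, and $\psi$ is then replaced by an isomorphic twist $\psi^\alpha$ that kills the kernel and descends.

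For essential surjectivity, your descent datum along $\u{\tilde G}\times_{\u G}\u{\tilde G}$ does exist canonically in the derived category (since $\phi$ Morita forces $\pi_1\cong\pi_2$), but it is only homotopy-coherent, and you explicitly defer the strictification, which is precisely where the entire content of the theorem lies; as written this is a plan, not a proof. The paper carries out the strictification concretely: after reducing to a \v{C}ech fibration (using the cofinality of such fibrations and the canonical factorization of Morita maps), it uses Corollary \ref{cor:stable} to replace $\u\Gamma$ by a stably equivalent $\u\Gamma\oplus\u\Omega$ admitting an \emph{invertible} cleavage, symmetrizes it, and then averages the curvature cocycle $\gamma_{kji}$ with a partition of unity to produce a $\U$-flat cleavage, which is exactly the condition for descending along $\pi_\U$. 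You correctly guessed that Corollary \ref{cor:stable} is the strictification tool, but the averaging argument that makes it work is missing. A minor further inaccuracy: base-change along a categorical equivalence of Lie groupoids is an equivalence of the \emph{derived} categories only (Proposition \ref{prop:VB-stack-particular-case}); the comparison map $\tilde\alpha:\phi^*\u\Gamma\to\psi^*\u\Gamma$ built from a natural isomorphism $\phi\cong\psi$ is a fiberwise quasi-isomorphism but need not be invertible, so it is not an equivalence of the 1-categories $VB(\u G)$.
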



The particular case when $\phi$ is an equivalence can be derived from the result on previous subsection. Roughly speaking, we can push-forward VB-groupoids and VB-maps along $\phi$ by pulling back them along a quasi-inverse $\psi$ of $\phi$. The proof of the general case is way more delicate and we postpone it to the next subsection.

\begin{proposition}\label{prop:VB-stack-particular-case}
If $\phi:\u{\tilde G}\to \u{G}$ is a categorical equivalence, then the base-change functor
$\phi^*:VB[\u G]\to VB[\u{\tilde G}]$ is an equivalence.
\end{proposition}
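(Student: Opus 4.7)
The plan is to exploit a quasi-inverse of $\phi$. Fix $\psi:\u G\to \u{\tilde G}$ together with natural isomorphisms $\eta:\id_{\u G}\cong \phi\psi$ and $\epsilon:\psi\phi\cong \id_{\u{\tilde G}}$, and produce the candidate quasi-inverse $\psi^*:VB[\u{\tilde G}]\to VB[\u G]$. Granting the standard pseudo-functoriality $(fg)^*\cong g^*f^*$ of base-change, proving $\phi^*$ is an equivalence reduces to establishing that \emph{isomorphic Lie groupoid maps induce isomorphic base-change functors on the derived category}. The whole problem thus concentrates in the following key lemma.

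\medskip
\noindent\emph{Key lemma.} Given a natural isomorphism $\alpha:f\cong g:\u G'\to\u G$, for every $\u\Gamma\in VB(\u G)$ there is a VB-isomorphism $\alpha_{\u\Gamma}:f^*\u\Gamma\cong g^*\u\Gamma$ over $\u G'$, canonical in $VB[\u G']$ and natural in $\u\Gamma$.
\medskip

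To build $\alpha_{\u\Gamma}$ I would use Lemma \ref{lemma:cleavage-as-map}: the isomorphism $\alpha$ is exactly a Lie groupoid map $\tilde\alpha:\u G'\to \u G^I$ with $\sigma\tilde\alpha=f$ and $\tau\tilde\alpha=g$, while a (unital) cleavage $\Sigma$ on $\u\Gamma$ is exactly a VB-map $\rho:\sigma^*\u\Gamma\to\tau^*\u\Gamma$ over $\u G^I$ satisfying $\mu^*\rho=\id$. Pulling back along $\tilde\alpha$ yields
$$\tilde\alpha^*\rho\ \colon\ f^*\u\Gamma=\tilde\alpha^*\sigma^*\u\Gamma\ \longrightarrow\ \tilde\alpha^*\tau^*\u\Gamma=g^*\u\Gamma,$$
which fiberwise is given by conjugation with the invertible arrows $\Sigma(\alpha_-,-)$ in $\u\Gamma$, so it is automatically a VB-isomorphism. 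Naturality in $\u\Gamma$ is immediate from functoriality of pullback, and compatibility with vertical/horizontal composition of natural transformations reduces to straightforward cleavage-bookkeeping.

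The main obstacle, and the reason we must pass to $VB[\u G']$, is to show that $\tilde\alpha^*\rho$ is independent of the cleavage up to VB-isomorphism. Two unital cleavages $\Sigma_0,\Sigma_1$ differ by a section of $\ker(s)|_G\cong t^*C$, and the convex interpolation $\Sigma_\lambda=(1-\lambda)\Sigma_0+\lambda\Sigma_1$ (for $\lambda\in[0,1]$) is again a unital cleavage; differentiating in $\lambda$ yields an explicit natural transformation between the corresponding maps $\rho_0$ and $\rho_1$, whose pullback along $\tilde\alpha$ gives a VB-isomorphism between $\tilde\alpha^*\rho_0$ and $\tilde\alpha^*\rho_1$ over $\u G'$. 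Alternatively, via the Grothendieck equivalence \ref{thm:Grothendieck}, two cleavages produce the same 2-term ruth up to canonical isomorphism (their translation cocycle is a coboundary), which transports directly to a canonical isomorphism class of $\alpha_{\u\Gamma}$ in $VB[\u G']$.

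With the key lemma in hand the proof is formal: applying it to $\eta:\id_{\u G}\cong \phi\psi$ gives a natural isomorphism $\id_{VB[\u G]}\cong (\phi\psi)^*\cong \psi^*\phi^*$, and applying it to $\epsilon:\psi\phi\cong \id_{\u{\tilde G}}$ gives $\phi^*\psi^*\cong\id_{VB[\u{\tilde G}]}$, exhibiting $\psi^*$ as a quasi-inverse to $\phi^*$.
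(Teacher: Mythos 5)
Your overall route is the same as the paper's: reduce to showing that isomorphic maps $f\cong g:\u G'\to\u G$ induce isomorphic base-change functors on the derived categories, and build the comparison $f^*\u\Gamma\to g^*\u\Gamma$ out of a cleavage (your $\tilde\alpha^*\rho$ is literally the map $\tilde\alpha_x=\Sigma_{\alpha(x)}$ used in the paper, repackaged through Lemma \ref{lemma:cleavage-as-map}). There is, however, a genuine error in the key step: the claim that $\tilde\alpha^*\rho$ is ``automatically a VB-isomorphism'' because it is conjugation by the invertible arrows $\Sigma(\alpha(x),e)$. Each such arrow is indeed invertible in the groupoid $\u\Gamma$, but the induced linear map on objects, $e\mapsto t\Sigma(\alpha(x),e)=\rho^E_{\alpha(x)}(e):E_{f(x)}\to E_{g(x)}$, is the pseudo-representation attached to the cleavage, and a pseudo-representation is invertible only up to homotopy: $\rho^E_{h^{-1}}\circ\rho^E_{h}=\id-\partial\circ\gamma_{h^{-1},h}$ by Proposition \ref{prop:breaking}. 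It can genuinely fail to be injective: over the group $(\R\toto\ast)$ take the acyclic VB-groupoid with $E=C=\R$ and $\partial=\id$; a unital linear cleavage is an arbitrary smooth family of linear maps $\lambda_h:\R\to\R$ with $\lambda_0=\id$, e.g.\ $\lambda_h=(1-h)\,\id$, and then $\rho^E_1=0$. So $\tilde\alpha^*\rho$ is in general \emph{not} an isomorphism in $VB(\u G')$.

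What saves the argument --- and what the paper actually uses --- is that $\tilde\alpha^*\rho$ is a fiberwise quasi-isomorphism (the pair $(\rho^C_{\alpha(x)},\rho^E_{\alpha(x)})$ is invertible up to homotopy on the core complexes), hence VB-Morita by Theorem \ref{thm:VB-morita}, hence invertible in the derived category $VB[\u G']$ by Proposition \ref{prop:VB-Morita=equivalence}. In other words, the passage to $VB[\,\cdot\,]$ is needed not only to remove the dependence on the cleavage, as you say, but already to make the comparison map invertible at all; the fact that your argument never invokes Theorem \ref{thm:VB-morita} or Proposition \ref{prop:VB-Morita=equivalence} is the symptom. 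With that correction the proof goes through as in the paper. Two smaller remarks: cleavage-independence can be seen directly, since $e\mapsto\Sigma_1(\alpha(x),e)\,\Sigma_0(\alpha(x),e)^{-1}$ is exactly an isomorphism of VB-maps over $\u G'$ in the sense of the definition at the start of Section 6, so no interpolation is needed; and naturality in $\u\Gamma$ is not ``immediate from functoriality of pullback'', because $\alpha_{\u\Gamma}$ and $\alpha_{\u\Gamma'}$ are built from unrelated cleavages, so the naturality square only commutes up to isomorphism, i.e.\ in $VB[\u G']$ --- a point the paper's own proof also leaves implicit.
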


\begin{proof}
We just need to show that isomorphic maps $\phi\cong\psi:\u{\tilde G}\to \u G$ induce isomorphic base-change functors between the homotopy categories. Now, if $\alpha:\phi\cong\psi$ is a natural isomorphism, and if $\u\Gamma$ is a VB-groupoid over $\u G$, then with the aid of a cleavage $\Sigma$ on $\u\Gamma$ we can build a map $\tilde\alpha:\phi^*\u\Gamma\to\psi^*\u\Gamma$ of VB-groupoids over $G\toto M$, by $\tilde\alpha_x=\Sigma_{\alpha(x)}$. Note that $\tilde\alpha$ need not to be invertible, but it is a fiberwise quasi-isomorphism. The map $\tilde\alpha$ depends on $\Sigma$ up to isomorphism, but when passing to the derived categories we get rid of this dependence, and moreover $\tilde\alpha$ becomes invertible by Theorem \ref{thm:VB-morita}.
\end{proof}


Theorem \ref{thm:VB-stacks} is already quite interesting in the simple case on which $\u G=(M\toto M)$ is just a manifold and $\tilde G=(\coprod_{ji} U_{ji}\toto\coprod_i U_i)$ is the Lie groupoid arising from an open cover $\{U_i\}$ of $M$. We can then interpret a VB-groupoid $\u\Gamma$ over $\tilde G$ as the data of a 2-vector bundle over each $U_i$ and a sort of cocycle up to homotopy. It follows from our result that such a cocycle can always be strictified, allowing a descent construction, and yielding a globally defined 2-vector bundle over $M$. 


We propose here an alternative viewpoint over our Theorem \ref{thm:VB-stacks}. In light of \ref{thm:VB-morita}, the localization of VB-groupoids by VB-Morita maps projects over the localization of Lie groupoids by Morita maps, which is the category of differentiable stacks. Then we could define the VB-stacks over a given stack $\mathcal X$ as the fiber of that projection. This way it is rather unclear whether a VB-stack over the orbit stack of $\u G$ can be realized as a VB-groupoid over $\u G$. Our theorem ensures that this in fact the case, that localizing and taking fibers commute.


Finally, by combining \ref{thm:Grothendieck}, \ref{cor:quasi-iso} and \ref{thm:VB-stacks}, we can give a positive answer to (an instance of) the Morita invariance of representations up to homotopy (cf. \cite[Ex. 3.18]{ac}).

\begin{corollary}
The derived category of the 2-term representations up to homotopy of a Lie groupoid is a Morita invariant.
\end{corollary}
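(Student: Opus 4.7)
The plan is to derive this corollary by stringing together the three key results highlighted in the statement: the Grothendieck equivalence of Theorem \ref{thm:Grothendieck}, the translation of quasi-isomorphisms into VB-Morita maps from Corollary \ref{cor:quasi-iso}, and the Morita invariance of $VB[\u G]$ established in Theorem \ref{thm:VB-stacks}. First I would make the statement precise: denote by $Rep^\infty_{\text{2-term}}[\u G]$ the \emph{derived category}, obtained from $Rep^\infty_{\text{2-term}}(\u G)$ by formally inverting the quasi-isomorphisms of 2-term representations up to homotopy. The claim is that a Morita map $\phi:\u{\tilde G}\to\u G$ induces an equivalence $\phi^*:Rep^\infty_{\text{2-term}}[\u G]\to Rep^\infty_{\text{2-term}}[\u{\tilde G}]$.

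The first step is to transport the problem from ruths to VB-groupoids. By Theorem \ref{thm:Grothendieck}, the Grothendieck construction is an equivalence $\mathcal{G}:Rep^\infty_{\text{2-term}}(\u G)\xrightarrow{\simeq} VB(\u G)$ of 1-categories, and by Corollary \ref{cor:quasi-iso}, $\mathcal{G}$ sends quasi-isomorphisms of ruths to VB-Morita maps over $\u G$. Since a localization of equivalent categories at corresponding classes of morphisms yields equivalent localized categories, $\mathcal{G}$ descends to an equivalence $Rep^\infty_{\text{2-term}}[\u G]\xrightarrow{\simeq} VB[\u G]$, where the right-hand side is the localization of $VB(\u G)$ at the VB-Morita maps over $\u G$; by Proposition \ref{prop:VB-Morita=equivalence} this coincides with the derived category $VB[\u G]$ appearing in Theorem \ref{thm:VB-stacks}.

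The second step is to invoke Theorem \ref{thm:VB-stacks}: for any Morita map $\phi:\u{\tilde G}\to\u G$, the base-change functor $\phi^*:VB[\u G]\to VB[\u{\tilde G}]$ is an equivalence. Composing with the Grothendieck equivalences on both sides,
\[
Rep^\infty_{\text{2-term}}[\u G]\xrightarrow{\mathcal{G}} VB[\u G]\xrightarrow{\phi^*} VB[\u{\tilde G}]\xrightarrow{\mathcal{G}^{-1}} Rep^\infty_{\text{2-term}}[\u{\tilde G}]
\]
provides the desired equivalence. To call this honestly "base-change of representations up to homotopy", I would note that the composite functor is naturally isomorphic to the pullback of ruths along $\phi$, since the Grothendieck construction manifestly commutes with base-change (the semi-direct product groupoid is pulled back arrow-by-arrow, and the structure formulas in Proposition \ref{prop:breaking} are pulled back component-wise).

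The main conceptual obstacle is really a bookkeeping one: checking that base-change on the ruth side is well-defined at the derived level and agrees, under Grothendieck, with base-change of VB-groupoids. Once one observes that pullback preserves quasi-isomorphisms of chain complexes of vector bundles, the compatibility reduces to the functorial nature of the formulas defining $\mathcal{G}$ and its inverse via a cleavage; no further argument is needed. The heavy lifting has already been done in Theorems \ref{thm:Grothendieck} and \ref{thm:VB-stacks}, so the corollary follows by formal assembly.
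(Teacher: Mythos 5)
Your proposal is correct and follows exactly the route the paper intends: the paper's "proof" is precisely the one-line combination of Theorem \ref{thm:Grothendieck}, Corollary \ref{cor:quasi-iso} and Theorem \ref{thm:VB-stacks}, which you have spelled out carefully. Your extra remark that the Grothendieck construction commutes with base-change (so that the composite really is pullback of ruths) is a worthwhile clarification but does not change the argument.
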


In \cite{dht} we explore a geometric realization of higher representations up to homotopy as simplicial vector bundles over the nerve of the Lie groupoid. We expect this to be useful in extending some of the results obtained here, such as the Morita invariance, from the 2-term to the general case. 

\subsection{Proof of the main theorem}


We proceed as follows. First we show that, by a standard argument, we can suppose that $\phi$ is a {\em \v{C}ech fibration}, an equivalence given by an open cover of the unit manifold. 
Then we show that $\phi^*$ is fully faithful. Even though a map between pullback VB-groupoids may not descend a priori, we show that it does so after averaging with respect to a partition of 1, and that this averaging does not change the isomorphism type. This gives fullness, and also faithfulness, after realizing an isomorphism of VB-maps as a VB-map, in the same way a homotopy of maps is itself a map. 
Finally, we show that $\phi^*$ is essentially surjective, starting with an arbitrary VB-groupoid and replacing it with other equivalent one that admits a cleavage with flatness properties, again by using a partition of 1.


\medskip

\noindent{\bf Step 1: Restricting to \v{C}ech fibrations} 


Given a Lie groupoid $G\toto M$, and given $\U=\{U_i\}_i$ an open cover of $M$, we can build the pullback groupoid $\u{G_\U}=(\coprod_{j,i}G(U_j,U_i)\toto \coprod_i U_i)$ corresponding to the surjective submersion $\coprod_i U_i\to M$. Its structure maps are induced by those of $\u G$. The canonical projection 
$$\pi_\U:\u{G_\U}\to \u G$$
is a Morita fibration, we call it a {\bf \v{C}ech fibration}. 
The kernel is just $\coprod_{ji} U_{ji}\toto\coprod_i U_i$.


This type of fibrations are cofinal among the Morita fibrations over $\u G$. If $\phi:\u{\tilde G}\to \u G$ is any other Morita fibration, then a collection of local sections $\sigma_i:U_i\to \tilde M$ canonically induces a Lie groupoid map $\sigma:\u{G_\U}\to\u{\tilde G}$, and we get a {\bf refining} \v{C}ech fibration as follows:
$$\xymatrix{
 & \u{\tilde G} \ar[d]^\phi & \\
 \u{G_\U} \ar[ru]^\sigma \ar[r]_{\pi_\U}  & \u G
}$$


We want to show that the derived category $VB[\u G]$ is a Morita invariant of $\u G$. Given $\phi:\u{\tilde G}\to\u G$ a Morita map, we need to show that the pullback is an equivalence of categories.
In light of the canonical factorization of a Morita map as an equivalence followed by a Morita fibration (cf. \cite[Rmk 6.2.6]{dhf}), and in light of the particular case already proven (cf. Prop. \ref{prop:VB-stack-particular-case}), we can suppose that $\phi$ is a Morita fibration. And since \v{C}ech fibrations are cofinal among the Morita fibrations, by the following standard argument, we can restrict our attention to them.

\begin{lemma}
If $\phi^*$ is an equivalence of categories for every \v{C}ech fibration, then the same holds for every Morita map.
\end{lemma}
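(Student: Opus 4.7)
The plan is to reduce to \v{C}ech fibrations using cofinality together with the 2-out-of-3 property for equivalences of categories. First, I would note that any Morita map $\phi$ factors as a categorical equivalence followed by a Morita fibration by the canonical factorization (cf. \cite[Rmk 6.2.6]{dhf}). Since Proposition \ref{prop:VB-stack-particular-case} already ensures that categorical equivalences induce equivalences on the derived categories, it suffices to treat the case where $\phi:\u{\tilde G}\to\u G$ is a Morita fibration.

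Given such a $\phi$, the map $\phi_0:\tilde M\to M$ is a surjective submersion, so by the implicit function theorem I can choose an open cover $\U=\{U_i\}$ of $M$ together with local sections $\sigma_i:U_i\to\tilde M$ of $\phi_0$ such that each $\tilde U_i:=\sigma_i(U_i)$ is open in $\tilde M$ and the family $\tilde\U=\{\tilde U_i\}$ covers $\tilde M$; this requires picking enough sections to pass through every point of $\tilde M$, which is possible as $\phi_0$ is a submersion. This yields a lift $\sigma:\u{G_\U}\to\u{\tilde G}$ with $\phi\circ\sigma=\pi_\U$, as in the refining diagram above.

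The key observation is that $\sigma$ is itself canonically isomorphic to the \v{C}ech fibration $\pi_{\tilde\U}:\u{\tilde G_{\tilde\U}}\to\u{\tilde G}$ of $\u{\tilde G}$ associated to the cover $\tilde\U$. Indeed, the diffeomorphisms $\sigma_i:U_i\xto{\cong}\tilde U_i$ identify the unit manifolds, and the fully faithfulness of the Morita map $\phi$ yields identifications $\tilde G(\tilde U_i,\tilde U_j)\cong G(U_i,U_j)$ on arrows; these assemble into a Lie groupoid isomorphism $\u{G_\U}\cong\u{\tilde G_{\tilde\U}}$ under which $\sigma$ corresponds to $\pi_{\tilde\U}$.

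Applying the hypothesis to both $\pi_\U$ and $\pi_{\tilde\U}$, both $\pi_\U^*$ and $\sigma^*$ are equivalences of categories. Since $\pi_\U^*=\sigma^*\circ\phi^*$ by functoriality of the base-change, the 2-out-of-3 property for equivalences of categories forces $\phi^*$ to be an equivalence, as desired. The main subtlety lies in the second step: one must arrange the open cover so that the sections have open images collectively covering $\tilde M$, which is precisely what lets $\sigma$ be realized as an honest \v{C}ech fibration (and not merely as a Morita map, to which the hypothesis would not directly apply).
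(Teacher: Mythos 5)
Your first step (factor $\phi$ as an equivalence followed by a Morita fibration, dispose of the equivalence via Proposition \ref{prop:VB-stack-particular-case}, and then aim for a two-out-of-three argument using cofinality of \v{C}ech fibrations) is exactly the paper's strategy. But the pivotal claim in your second paragraph is false: for a surjective submersion $\phi_0:\tilde M\to M$, a local section $\sigma_i:U_i\to\tilde M$ is an embedding whose image has dimension $\dim M$, so $\sigma_i(U_i)$ is open in $\tilde M$ only when $\dim\tilde M=\dim M$, i.e.\ when $\phi_0$ is \'etale. For a general Morita fibration (say $\u{\tilde G}$ the pullback of $\u G$ along $M\times\R\to M$) the images of sections are nowhere dense, they cannot cover $\tilde M$, and $\sigma$ is not even a fibration on objects ($\coprod_i U_i\to\tilde M$ is neither surjective nor a submersion). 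Hence $\sigma$ is only a Morita map, not a \v{C}ech fibration, and — as you yourself note at the end — the hypothesis does not apply to it. The implicit function theorem cannot rescue this.

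The repair is the one the paper uses: instead of trying to make $\sigma$ itself a \v{C}ech fibration, pull the cover back, setting $\tilde U_i=\phi^{-1}(U_i)$, and form the \v{C}ech fibration $\tilde\pi_{\tilde\U}:\u{\tilde G_{\tilde\U}}\to\u{\tilde G}$ of $\u{\tilde G}$ for this genuinely open cover. The map $\phi$ induces $\phi_\U:\u{\tilde G_{\tilde\U}}\to\u{G_\U}$ with $\tilde\pi_{\tilde\U}\cong\sigma\circ\phi_\U$ and $\pi_\U=\phi\circ\sigma$. Applying the hypothesis to the two \v{C}ech fibrations $\tilde\pi_{\tilde\U}$ and $\pi_\U$, the composites $\phi_\U^*\circ\sigma^*$ and $\sigma^*\circ\phi^*$ are both equivalences; so $\sigma^*$ admits a left and a right quasi-inverse, hence is an equivalence, and then two-out-of-three applied to $\pi_\U^*=\sigma^*\circ\phi^*$ gives that $\phi^*$ is an equivalence. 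Your final two-out-of-three step is fine once this intermediate square is in place.
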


\begin{proof}
As explained, we can assume $\phi^*$ to be a fibration. So start with an arbitrary Morita fibration $\phi:\u{\tilde G}\to \u G$. Take a refining \v{C}ech fibration as above, and by using the induced open cover $\{\tilde U_i=\phi^{-1}(U_i)\}_i$, build the corresponding \v{C}ech fibration over $\u{\tilde G}$:
$$\xymatrix{
\u{\tilde G_{\tilde\U}} \ar[r]^{\tilde\pi_{\tilde\U}} \ar[d]_{\phi_\U} & \u{\tilde G} \ar[d]^\phi & \\
\u{G_\U} \ar[r]^{\pi_\U} \ar[ur]^\sigma & \u G
}$$
The map $\phi$ induces another $\phi_\U$ completing the diagram.
By hypothesis, the base-change functors $\tilde\pi_{\tilde\U}^*=\sigma^*\phi_\U^*$ and $\pi_\U=\phi^*\sigma^*$ are equivalences of categories. Then $\sigma^*$ has to be an equivalence of categories. And by a two-out-of-three argument, the original base-change functor $\phi^*$ also is.
\end{proof}


\medskip

\noindent{\bf Step 2: $\phi_\U^*$ is fully faithful}


We start by showing that it is full.
Given $\pi_\U:\u{G_\U}\to \u G$ a \v{C}ech fibration, $\u \Gamma$ and $\u \Gamma'$ VB-groupoids over $\u G$, and $\psi:\pi_\U^*(\u \Gamma)\to\pi_\U^*(\u \Gamma')$ a VB-map over $\u{G_\U}$, we want to show that there is a map $\phi:\u \Gamma\to \u \Gamma'$ such that $\pi_\U^*(\phi)\cong\psi$.
This is equivalent to build a VB-map $\phi:\u \Gamma\to \u \Gamma'$ such that the following square commutes up to homotopy, where the maps $\pi,\pi'$ are the canonical projections.
$$\xymatrix{\pi_\U^*(\u \Gamma) \ar[r]^{\pi} \ar[d]_{\psi} & \u \Gamma \ar@{-->}[d]^\phi\\ \pi_\U^*(\u \Gamma') \ar[r]^{\pi'} & \u \Gamma'}$$


We will cook up $\phi$ by using the following elementary property:

\begin{lemma}
Let $\phi:\u{\tilde G}\to \u G$ be a Morita fibration with kernel $K$. 
A map $\psi:\u{\tilde G}\to \u H$ factors through $\phi$ as $\tilde\psi\phi=\psi$ if and only if $\psi$ maps $K$ into identities.
\end{lemma}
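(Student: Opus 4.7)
The easy direction is immediate: if $\psi = \tilde\psi\phi$, then for any $k \in K$ we have $\phi(k) = \id$, hence $\psi(k) = \tilde\psi(\id) = \id$. So the content is in the converse, where we must construct $\tilde\psi:\u G \to \u H$ out of $\psi$ assuming $\psi$ trivializes $K$.

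My plan is to define $\tilde\psi$ pointwise by choosing lifts through $\phi$, and use the kernel hypothesis to show the construction is independent of the choices. First, I would define $\tilde\psi_0$ on objects. Since $\phi$ is a fibration, $\phi_0:\tilde M\to M$ is a surjective submersion, so we can cover $M$ by opens $\{U_i\}$ admitting smooth local sections $\sigma_i:U_i\to \tilde M$. Define $\tilde\psi_0|_{U_i} = \psi_0\circ\sigma_i$. To check this is well-defined on overlaps, observe that for $x\in U_i\cap U_j$, fully faithfulness of $\phi$ gives a unique arrow $k:\sigma_j(x)\to \sigma_i(x)$ in $\tilde G$ with $\phi(k)=\id_x$, and this $k$ lies in $K$; applying $\psi$, $\psi(k)=\id$ forces $\psi_0(\sigma_i(x))=\psi_0(\sigma_j(x))$.

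Next I would define $\tilde\psi_1$ on arrows. Given $g:y\leftarrow x$ in $G$, use the fibration property to pick a lift $\tilde g:\tilde y\leftarrow \tilde x$ with $\phi(\tilde g)=g$, and set $\tilde\psi_1(g)=\psi_1(\tilde g)$. Two different lifts $\tilde g,\tilde g'$ (with possibly different endpoints) are related by unique $K$-arrows $k_s,k_t$ on source and target sides, and the identity $\tilde g' = k_t^{-1}\tilde g\, k_s$ follows from fully faithfulness; applying $\psi$ and using that it kills $K$ shows $\psi(\tilde g')=\psi(\tilde g)$. Functoriality (units, composition, inverses) is then automatic: for composition, lift $g_1,g_2$ to composable arrows $\tilde g_1,\tilde g_2$ (again possible by the fibration property) and use that $\tilde g_1\tilde g_2$ lifts $g_1g_2$.

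The main technical point will be smoothness of $\tilde\psi_1$. This I would handle by picking, locally in $G$, simultaneous smooth sections of $\phi_0$ for both source and target and smooth lifts of arrows: concretely, given an arrow $g$ of $G$, the fibration condition amounts to the natural map $\tilde G_1 \to \tilde M\times_{M}G_1$ being a surjective submersion, so admits local sections, and composing with $\psi_1$ gives a local expression of $\tilde\psi_1$. The ambiguity of these local choices is controlled by $K$ exactly as above, so they glue to a smooth map. The result is the desired factorization $\tilde\psi\phi=\psi$, which holds by construction since $\tilde g$ was chosen as a lift of $g$.
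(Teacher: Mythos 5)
Your argument is correct and is exactly the expanded version of the paper's one-line proof, which simply invokes the fact that a map constant on the fibers of a surjective submersion descends: you verify that constancy (on objects and on arrows) using fully faithfulness together with the hypothesis that $\psi$ kills $K$, and then descend smoothly via local sections of the submersions $\phi_0$ and $\tilde G_1\to\tilde M\times_M G_1$. No discrepancy with the paper's approach.
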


\begin{proof}
This is a straightforward consequence of the fact that a map constant over the fibers of a surjective submersion descends to the base manifold.
\end{proof}

In our case, both $\pi,\pi'$ are \v{C}ech fibrations with kernel $K=(\coprod E_{ji}\toto\coprod E_i)$, where $q:E\to M$ is the projection. We will show that $\psi$ is isomorphic to a map $\tilde\psi$ that preserves the kernel, and therefore it descends to give a map $\phi$ as we want.


Let us a review a general construction. Given $\phi:\u\Gamma\to\u\Gamma'$ a VB-map over $\u G$ and $\alpha:E\to C'$ a linear map, the {\bf twisting} of $\phi$ by $\alpha$ is the map defined below:
$$\phi^\alpha_0(e)=\phi_0(e)+\partial(\alpha(e))\qquad
\phi_1^\alpha(e'\xfrom v e)=(\alpha(e')+u\phi_0(e'))\circ\phi(g)\circ
(\alpha(e)+u\phi_0(e))^{-1}$$
This way we have an isomorphism $\alpha:\phi\cong\phi^\alpha$, and actually, for any isomorphism $\alpha:\phi\cong\psi$ over the identity of $\u G$ we have $\psi=\phi^\alpha$. 


Coming back, we seek for a vector bundle map $\alpha:\coprod_i E_i\to \coprod_i C'_i\subset\coprod \Gamma'_i$, that amounts to be the same as a collection $\{\alpha_i:E_i\to C'_i\}_i$.
Writing $\psi_0=\coprod_i\psi_i:\coprod_i E_i\to\coprod E'_i$ and $\psi_1=\coprod_{ji}\psi_{ji}:\coprod \Gamma_{ji}\to\coprod_{ji}\Gamma'_{ji}$ for the induced maps on objects and arrows, we can define a family 
$\beta_{ji}:E_{ji}\to C'_{ji}$, $\beta_{ji}=\psi_{ji}\circ u - u\circ \psi_i$. This is the vertical obstruction for $\psi$ to preserve the kernel \smash{$K=\{(e,j)\xfrom{(u(e),j,i)}(e,i)\}$}:
$$\xymatrix@C=40pt{
 (\psi_j(e),j) & \\
 (\psi_i(e),j) \ar[u]^{(\beta_{ji}(e),j,j)} &  
 \ar[lu]_{(\psi_{ji}(u(e)),j,i)}
 \ar[l]^{(u(\psi_i(e)),j,i)}
 (\psi_i(e),i)
 }$$
This $\beta$ is a cocycle, in the sense that $\beta_{kj}(e)+\beta_{ji}(e)=\beta_{ki}(e)$ holds for any $k,j,i$.


We now integrate the cocycle $\beta$, using a partition of 1 
$\{\lambda_i\}_i$ 
subordinated to $\{U_i\}_i$, by defining
$\alpha_i:E_i\to C_i'$, $\alpha_i(e)=\sum \lambda_j(x)\beta_{ji}(e)$. 
Twisting the original $\psi$ by $\alpha$ we get the desired isomorphic map that preserves the kernel and descends to the quotient:
\begin{align*}
\psi^\alpha(u(e),j,i) 
  &=(\alpha_j(e)+u\psi_{j}(e),j,j)\circ(\psi_{ji}(u(e)),j,i)\circ(\alpha_i(e)+u\psi_{i}(e),i,i)^{-1}\\
  &=(\sum_{k}\lambda_k(x)\beta_{kj}(e)+\beta_{ji}(e) -\sum_k\lambda_k(x)\beta_{ki}(e),j,j)+ \psi^\alpha(u(e),j,i)\\
  &=(0,j,j)+\psi^\alpha(u(e),j,i)=\psi^\alpha(u(e),j,i).
\end{align*}



We have now completed the proof of fullness. As we said before, in order to show that the base-change is faithful, which means injective on isomorphism classes of maps, we realize an isomorphism as a VB-map, and use the fullness we have just established.

Let $\psi,\psi':\u \Gamma\to \u \Gamma'$ over $\u G$, such that 
$\pi_\U^*(\psi)$ and $\pi_\U^*(\psi')$ are isomorphic, namely there is a homotopy $h:\pi_\U^*(\u\Gamma)\to \pi_\U^*(\u\Gamma')^I\cong \pi_\U^*(\u\Gamma'^I)$ such that $\sigma h=\pi_\U^*(\psi)$ and $\tau h=\pi_\U^*(\psi')$. Then since the base-change functor is full we know there exists an $h':\u\Gamma\to \u\Gamma'^I$ such that $\pi_\U^*(h')=h$, and therefore, $\sigma h'\cong\psi$ and $\tau h'=\psi'$.


\medskip

\noindent{\bf Step 3: $\phi^*$ essentially surjective}

Given $\u \Gamma$ over $\u{G_\U}$, we want to find $\u{\tilde\Gamma}$ over $\u G$ and an equivalence $\u\Gamma\cong \pi_\U^*(\u{\tilde\Gamma})$.
To do this we first characterize the VB-groupoids over $\u{G_\U}$ that are a pullback through $\pi_\U$. They are those admitting an {\bf $\U$-flat} cleavage $\Sigma$, namely one that restricted to the kernel $K$ of $\pi_\U$ is flat.


\begin{lemma}
A VB-groupoid $\u \Gamma$ over $\u G_\U$ is isomorphic to a pullback VB-groupoid $\u{\tilde\Gamma_\U}$ if and only if it admits a cleavage $\Sigma$ that is $\U$-flat.
\end{lemma}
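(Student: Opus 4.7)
The plan is to establish the two directions separately, viewing a $\U$-flat cleavage as descent data for the surjective submersion $q:\coprod_i U_i\to M$.

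For the forward implication, suppose $\u\Gamma \cong \pi_\U^*\u{\tilde\Gamma}$ for some $\u{\tilde\Gamma}$ over $\u G$, and pull back any unital cleavage $\tilde\Sigma$ on $\u{\tilde\Gamma}$ to obtain a cleavage $\Sigma$ on $\u\Gamma$. Since every kernel arrow $(x,j,i)\in K$ projects under $\pi_\U$ to the unit $u(x)\in G$, unitality of $\tilde\Sigma$ forces $\Sigma|_K$ to take values in identity arrows of $\u\Gamma$; the flatness identity $\Sigma(h,t\Sigma(g,e))\circ \Sigma(g,e)=\Sigma(hg,e)$ then holds automatically on $K$.

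For the reverse implication, given a unital $\U$-flat cleavage $\Sigma$ on $\u\Gamma$, I would interpret $\Sigma|_K$ as descent data. The restriction of $\u\Gamma$ to $K$ is itself a VB-groupoid $\u\Gamma|_K\to K$, and $\U$-flatness together with unitality is exactly the condition that, via the strict case of Theorem \ref{thm:Grothendieck} (vanishing curvature tensor), the cleavage $\Sigma|_K$ corresponds to a genuine linear action of the \v{C}ech groupoid $K=(\coprod U_{ji}\toto\coprod U_i)$ on the unit bundle $E\to\coprod U_i$, together with a compatible action on the core. Since $K$ is precisely the \v{C}ech groupoid of $q$, ordinary descent produces a vector bundle $\tilde E\to M$ with $E\cong q^*\tilde E$. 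By the same mechanism, conjugating arrows in $\u\Gamma$ with cleavage lifts of the source and target kernel arrows yields a compatible two-sided action of $K$ on $\Gamma\to G_\U$, whose descent produces a bundle $\tilde\Gamma\to G$ with $\pi_\U^*\tilde\Gamma\cong\Gamma$.

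The main technical point, and the principal obstacle, is to check that the source, target, unit, inverse, and multiplication of $\u\Gamma$ are equivariant with respect to the $K$-actions constructed above, and thus descend to endow $\u{\tilde\Gamma}=(\tilde\Gamma\toto\tilde E)$ with the structure of a VB-groupoid over $\u G$. Each equivariance reduces to an elementary identity among cleavage arrows in which unitality and the $\U$-flat cocycle condition are precisely what is needed to make the gluing close; the compatibility with the source map is immediate, the one with the target is what produced the action on $E$ to begin with, and the compatibility with the multiplication follows from associativity in $\u\Gamma$ together with flatness on $K$. Once the descended groupoid structure is in place, the descent isomorphisms for $E$ and $\Gamma$ assemble into the required isomorphism $\u\Gamma\cong\pi_\U^*\u{\tilde\Gamma}$ of VB-groupoids over $\u{G_\U}$.
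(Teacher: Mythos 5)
Your argument is correct and is essentially the paper's: the forward direction pulls back a unital cleavage (whose restriction over the kernel $K$ is then automatically flat), and the reverse direction uses the unital $\U$-flat cleavage over $K$ as strict descent data for the \v{C}ech groupoid of $q:\coprod_i U_i\to M$, descending $E$, $\Gamma$ and the structure maps to a VB-groupoid over $\u G$. The only difference is one of packaging: the paper observes that the cleavage arrows over $K$ form a free proper wide subgroupoid $\u K_{\u\Gamma}\subset\u\Gamma$ and invokes \cite[Prop.~6.2.4]{dhf} to obtain the quotient fibration $\u\Gamma\to\u\Gamma/\u K_{\u\Gamma}$, which is exactly the descent you carry out by hand (and, as a very minor point of phrasing, the induced cleavage over a kernel arrow $(x,j,i)$ with $j\neq i$ is not literally an identity arrow of the pullback groupoid, though its second component is, which is what your flatness computation actually uses).
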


\begin{proof}
Given a VB-groupoid $\u\Gamma$ over $\u G$, and given a cleavage $\Sigma$ on it, there is an induced cleavage on the base-change $\pi_\U*(\u\Gamma)$, and this cleavage is $\U$-flat. Conversely, suppose that $\u\Gamma$ is a VB-groupoid over $\u{\tilde G}$ and that is endowed with a $\U$-flat cleavage $\Sigma$. This cleavage defines a free proper wide subgroupoid $\u K_{\u\Gamma}\subset\u\Gamma$, and therefore, a fibration with base $\u\Gamma/\u{K}_{\u\Gamma}$ (cf. \cite[Prop. 6.2.4]{dhf}) 
$$\xymatrix{
\u K_{\u\Gamma} \ar[r] & \u\Gamma \ar[r] \ar[d] & \u\Gamma/\u{K}_{\u\Gamma} \ar[d] \\
\u K \ar[r] & \u{G_\U} \ar[r] & \u G
}$$
It is straightforward to check that $\u\Gamma/\u{K}_{\u\Gamma}\to \u G$ is a VB-groupoid projection, and that $\pi_\U^*(\u\Gamma/\u K_{\u\Gamma})$ is isomorphic to $\u\Gamma$.
\end{proof}


Consider now an arbitrary VB-groupoid $\u\Gamma$ over $\u{G_\U}$.
First we will replace $\u \Gamma$ by an equivalent VB-groupoid $\u{\tilde\Gamma}=\u \Gamma\oplus\u \Omega$, where $\Omega$ is acyclic, that does admit an {\bf invertible} cleavage $\Sigma$, in the sense that the associated pseudo-representation $\rho$ is by linear isomorphisms. 
Starting with a (unital) cleavage $\Sigma$, we regard it as a VB-map
$\rho:\u{\sigma^*\Gamma}\to \u{\tau^*\Gamma}$ over $\u{G^I}$ (cf. \ref{lemma:cleavage-as-map}), and observe that this is a quasi-isomorphism, for it is fiberwise invertible up to homotopy (cf. \ref{prop:breaking}). Then by Theorem \ref{thm:VB-morita} $\rho$ is a Morita map, and by Corollary \ref{cor:stable}, we can find $\u \Omega',\u \Omega''$ acyclic and an isomorphism $\u{\sigma^*\Gamma}\oplus\u\Omega'\xto\sim\u{\tau^*\Gamma}\oplus\u\Omega''$.
We will further assume that our open cover $\U$ is {\em good}, in the sense that every finite intersection is diffeomorphic to $\R^n$. The fact that these covers are cofinal is rather standard. Then, since the unit bundle of any VB-groupoid over $\u{G_\U}$ has to be trivial, we conclude that 
$\u\Omega'\cong\u\Omega''\cong\sigma^*(\u\Omega)\cong \tau^*(\u\Omega)$
where $\u\Omega$ is the unique acyclic VB-groupoid over $\u G$ with unit bundle trivial of rank $q$ (cf. Rmk \ref{rmk:acyclic}).
Since base-change preserves direct sums, the resulting isomorphism $$\rho':\sigma^*(\u\Gamma\oplus\u\Omega)\xto\sim\tau^*(\u\Gamma\oplus\u\Omega)$$
correspond to an invertible cleavage $\Sigma'$ over $\u\Gamma\oplus\u\Omega$ (again by \ref{lemma:cleavage-as-map}).
For this we need that the isomorphism $\rho$ is trivial over the identities, namely $\mu^*\rho'=\id$, but this can be achieved by picking carefully a linear complement to the kernel when constructing the quasi-inverse (cf. \ref{prop:VB-Morita=equivalence}).


Now, starting with $\u\Gamma$ a VB-groupoid over $\u{G_\U}$ and $\Sigma$ an invertible cleavage, we can easily construct a new $\Sigma$ that is symmetric, in the sense that $\rho_{ji}=\rho_{ij}^{-1}$, $\rho$ being the induced pseudo-representation. This step is very easy. Just establish a total order on the set indexing the open cover, and define a new cleavage $\Sigma'$ by setting 
 $$\Sigma'_{ji}=\begin{cases}
                 \Sigma_{ji} & j\geq i \\
		 \Sigma_{ij}^{-1} & i\geq j
		 \end{cases}.$$
 There is no ambiguity on the definition because $\Sigma$ is unital.
 

Finally, starting with $\u\Gamma$ and $\Sigma$ invertible and symmetric, we build a new cleavage $\Sigma'$ that is $\U$-flat, by performing an averaging.
Our cleavage $\Sigma$, invertible and satisfying unital and symmetry,  induces a ruth, with associated curvature tensor $\gamma_{kji}$. We now define 
 $$\beta_{ji}(e)=\sum_r\lambda_r(x)\gamma_{jri}(e)$$
where $\lambda_r$ is a partition of 1 subordinated to the open cover.
Our new cleavage is given by
\begin{align*}
\Sigma'_{ji}(g,e)
  &=\Sigma_{j,i}(g,e) + 0_{g,j,i}\beta_{ji}(e)\\
  &=\Sigma_{j,i}(g,e) + 0_{g,j,i}\sum_r\lambda_r(x)\gamma_{jri}(e)\\
  &=\Sigma_{j,i}(g,e) + \sum_r\lambda_r(x)0_{g,j,i}\gamma_{jri}(e)\\
  &=\Sigma_{j,i}(g,e) + \sum_r\lambda_r(x)(\Sigma_{jr}\Sigma_{ri}-\Sigma_{ji})\\
  &=\sum_r\lambda_r(x)\Sigma_{jr}\Sigma_{ri}
\end{align*}
It is easy to check now that the new cleavage $\Sigma'$ is $\U$-flat:
\begin{align*}
\Sigma'_{kj}\Sigma'_{ji}
  &=(\sum_r\lambda_r\Sigma_{kr}\Sigma_{rj})(\sum_r\lambda_r\Sigma_{jr}\Sigma_{ri})\\
  &=\sum_r\lambda_r\Sigma_{kr}\Sigma_{rj}\Sigma_{jr}\Sigma_{ri}\\
  &=\sum_r\lambda_r\Sigma_{kr}\Sigma_{ri}=\Sigma'_{ki}
\end{align*}
This completes the proof of our main theorem.


\begin{remark} 
This proof has a cohomological nature.
In step 2, when proving fully faithfulness, we use the behavior of the map over the kernel to build a cocycle
$\beta\in C^1(\u K,E\to C')$ in the transformation complex, and in step 3, when proving essential surjectivity, we build a cocycle $\gamma\in C^2(\u K,E\to C)$ measuring the failure of the cleavage to be $\U$-flat.
We show that the cohomology groups $H^1(\u K,E\to C')$ and 
$H^2(\u K,E\to C)=0$ vanish, by building cochain integration to our cocycles, with the aid of a partition of 1.
\end{remark}


\frenchspacing

\footnotesize{

}

\bigskip

\sf{\noindent Matias del Hoyo\\
Universidade Federal Fluminense (UFF), Departamento de Geometria (GGM).\\ 
Rua Prof. Marcos W. de Freitas Reis s/n, campus Gragoat\'a G, 24210-201 Niterói, RJ, Brasil.\\
mldelhoyo@id.uff.br}

\

\sf{\noindent Cristian Oritz\\
Universidade de S\~ao Paulo (USP), Instituto de Matem\'atica e Estat\'istica (IME).\\
Rua do Mat\~ao 1010, 05508-090, S\~ao Paulo, SP, Brasil.\\
cortiz@ime.usp.br}

\end{document}